\newtheorem{theorem}{Theorem}[section]
\newtheorem{lemma}[theorem]{Lemma}
\newtheorem{proposition}[theorem]{Proposition}
\newtheorem{corollary}[theorem]{Corollary}
\theoremstyle{definition}
\newtheorem{definition}[theorem]{Definition}
\newtheorem{example}[theorem]{Example}
\theoremstyle{remark}
\newtheorem{remark}[theorem]{Remark}
\numberwithin{equation}{section}
\def\R{\mathbb R}
\def\C{\mathbb C}
\def\Z{\mathbb Z}
\def\N{\mathbb N}
\def\S{\mathscr S}
\def\supp{\text{supp}}
\def\({\left(}
\def\){\right)}
\def\[{\left[}
\def\]{\right]}
\def\<{\left<}
\def\>{\right>}
\def\less{\lesssim}
\begin{document}

\title{Analysis of Hyper-Singular, Fractional, and Order-Zero Singular Integral Operators}

\author{Lucas Chaffee}
\address{ L. Chaffee\\
   Mathematics Department\\
   Western Washington University\\
   Bellingham, WA, USA }
\email{ lucas.chaffee@wwu.edu }
\author{Jarod Hart}
\address{ J. Hart\\
   Higuchi Biosciences Center\\
   University of Kansas\\
   Lawrence, KS, USA
   }
\email{jvhart@ku.edu}
\author{Lucas Oliveira}
\address{ L. Oliveira\\
   Mathematics Department\\
   Universidade Federal do Rio Grande do Sul\\
   Porto Alegre, Rio Grande do Sul, Brazil }
\email{lucas.oliveira@ufrgs.br}

\thanks{  }


\subjclass[2010]{42B20, 42B25, 42B30}

\date{\today}

\dedicatory{ }

\keywords{Operator Calculus, Operator Algebra, Calder\'on-Zygmund Operator, Fractional Integral, Fractional Derivative, Hyper-Singular Operator, Forbidden Pseudodifferential operator, Exotic Pseudodifferential Operator, T1 Theorem, Non-Convolution, Vanishing Moment, Weighted Estimate}
\maketitle

\begin{abstract}
In this article, we conduct a study of integral operators defined in terms of non-convolution type kernels with singularities of various degrees.  The operators that fall within our scope of research include fractional integrals, fractional derivatives, pseudodifferential operators, Calder\'on-Zygmund operators, and many others.  The main results of this article are built around the notion of an operator calculus that connects operators with different kernel singularities via vanishing moment conditions and composition with fractional derivative operators.  We also provide several boundedness results on weighted and unweighted distribution spaces, including homogeneous Sobolev, Besov, and Triebel-Lizorkin spaces, that are necessary and sufficient for the operator's vanishing moment properties, as well as certain behaviors for the operator under composition with fractional derivative and integral operators.  As applications, we prove $T1$ type theorems for singular integral operators with different singularities, boundedness results for pseudodifferential operators belonging to the forbidden class $S_{1,1}^0$, fractional order and hyper-singular paraproduct boundedness, a smooth-oscillating decomposition for singular integrals, sparse domination estimates that quantify regularity and oscillation, and several operator calculus results.  It is of particular interest that many of these results do not require $L^2$-boundedness of the operator, and furthermore, we apply our results to some operators that are known not to be $L^2$-bounded.
\end{abstract}

\section{Introduction}\label{Sect1}

Our primary goal in this article is to develop a theory that connects integral operators of different singularities through composition with fractional derivatives, and to understand how these operators are related through vanishing moment conditions and distribution space boundedness properties.  The operators we consider are, formally speaking, $\nu$-order singular integral operators of the form
$$Tf(x)=\int_{\R^n}K(x,y)f(y)dy,$$
where $K$ is a kernel that satisfies $|K(x,y)|\leq|x-y|^{-(n+\nu)}$, which will be defined precisely in Section \ref{Sect2} as members of the $\nu$-order Singular Integral Operator class $SIO_\nu$.  When $\nu>0$, $T$ is a hyper-singular operator and resembles differentiation in some sense.  When $\nu<0$, $T$ is a fractional order operator and resembles a fractional integral or anti-differentiation operator.  The prototypical example for such operators are the $\nu$-order fractional derivatives $|\nabla|^\nu\in SIO_\nu$ for $\nu\neq0$, which are defined via the Fourier multiplier $|\xi|^\nu$.  Such operators are typically viewed as $\nu$-order derivatives when $\nu>0$ and $\nu$-order fractional integrals when $\nu<0$, which agrees with our rough interpretation of $SIO_\nu$.  We will also work with the zero-order, or critical index, class $SIO_0$, which include several mainstays in harmonic analysis, like the Hilbert transform, Riesz transforms, other Calder\'on-Zygmund operators, zero-order pseudodifferential operators, and others.  Zero-order operators have been studied extensively.  Some of the work most closely related to the current article include \cite{CM1,DJ,FTW,FHJW,T,HH,HartLu1,HO}.  There appears to be much less theory developed for the classes $SIO_\nu$ for $\nu\neq0$.  The most relevant sources are \cite{T} for $\nu\neq0$ and \cite{CHO} for $\nu<0$.

We work with operators in $SIO_\nu$ in several different capacities, which we will eventually show are all equivalent in some sense.  For $T\in SIO_\nu$, we consider the following questions:  Under what conditions on $T$ and $s,t\in\R$ does $|\nabla|^{-s}T|\nabla|^t$ belong to $SIO_{\nu+t-s}$?  Under what condition can $T\in SIO_\nu$ be extended to a bounded operator on various distribution spaces, including weighted and unweighted homogeneous Sobolev, Besov, and Triebel-Lizorkin spaces?  We will show that the appropriate conditions for $T$ are a $\nu$-order Weak Boundedness Property, which we refer to as $WBP_\nu$,  and vanishing moment conditions of the form $T^*(x^\alpha)=0$.  Furthermore, we show that, in many situations, both the operator calculus properties and distribution space boundedness properties of an operator $T\in SIO_\nu$ are sufficient conditions for $T^*(x^\alpha)=0$.  Thus, we develop many equivalent conditions between vanishing moment properties, distribution space boundedness, and operator composition properties.

The original motivation for this work came from the notion of an operator calculus for different classes of integral operators, some examples of which include classes of linear and bilinear pseudodifferential operators, Calder\'on-Zygmund operators, and fractional integral operator.  It appears to be a consensus that the origins of a pseudodifferential symbolic calculus lie in the theory of singular integral operators, but it is not clear exactly where or when such a calculus first appeared.  Some have credited early development of the topic to Bokobza, Calder\'on, Mihlin, H\"ormander, Kohn, Nirenberg, Seeley, Unterberger, and Zygmund;  see \cite{Seel,NS,St3} for more information on the early history of this.  An operator calculus for a forbidden class, sometimes also referred to as an exotic class, of pseudodifferential operators was formulated by Bourdaud \cite{Bourd}, and the notion of a symbolic calculus for bilinear pseudodifferential operators was introduced by B\'enyi, Maldonado, Naibo, and Torres \cite{BMNT}.  Several algebras of Calder\'on-Zygmund operators have been formulated, for example, by Coifman and Meyer in \cite{CMbook}.  These subclasses of operators can be defined in terms of almost diagonal operators, which is a notion depending on wavelet decompositions, but they ultimately bare out to be equivalent to vanishing moment conditions for the operator.  Finally, the current authors developed a restricted calculus for linear and bilinear fractional integral operators in \cite{CHO} that in some senses resembles the one we present here.  One of the main results of this article is a restricted calculus for $SIO_\nu$ (see Theorems \ref{t:1sidecalc} and \ref{t:calculus}), and as an application we also introduce some new operator algebras associated to $SIO_\nu$ in Section \ref{Sect7.6}.

Our first objective is to develop the restricted operator calculus for $SIO_\nu$ being acted on by $|\nabla|^s$ for $s\in\R$.  A little more precisely, in Theorem \ref{t:1sidecalc} we show that if $T\in SIO_\nu$ satisfies a $\nu$-order weak boundedness property and $T^*(x^\alpha)=0$ for appropriate multi-indices $\alpha\in\N_0^n$, then $|\nabla|^{-s}T|\nabla|^t$ agrees modulo polynomials with an operator in $SIO_{\nu+t-s}$ for certain ranges of $s,t\in\R$.  In the process of formulating this restricted calculus, we prove some estimates for functions of the form $\psi_k*Tf(x)$ for $T\in SIO_\nu$, which are of interest and useful on their own right; see Theorem \ref{t:truncation} and Corollary \ref{c:ao}.

Our second goal is to show that the same conditions on $T$ mentioned in the previous paragraph are also sufficient for the boundedness of $|\nabla|^{-s}T|\nabla|^t$, and hence of $T$, on certain distribution spaces.  We show that $|\nabla|^{-s}T|\nabla|^t$ can be extended, modulo polynomials, to a bounded linear operator from $\dot W^{\nu+t-s,p}$ into $L^p$, which implies $T$ is can be extended to a bounded linear operator from $\dot W^{\nu-t}$ into $\dot W^{-t,p}$ for appropriate $t>\nu$; see Theorem \ref{t:calculus} for more on this.  We extend the boundedness properties of both $|\nabla|^{-s}T|\nabla|^t$ and $T$ to other functions spaces as well, including weighted Besov and Triebel-Lizorkin spaces, in Theorem \ref{t:BesovBounds}, Theorem \ref{t:TriebelLizorkinBounds}, Corollary \ref{c:dual1}, and Corollary \ref{c:dual2}.  A significant feature of all the results mentioned to this point, including the ones in the preceding paragraph, is that no a priori boundedness of $T$ is required.  Indeed, if $T\in SIO_0$, one need not require even $L^2$-boundedness for $T$ to apply these results.  Furthermore, it is even possible to apply these results to operators that are not $L^2$-bounded, and we provide some example of such operators.  This notion will be explored in more detail in the applications provided in Section \ref{Sect7}, specifically in Sections \ref{Sect7.2}-\ref{Sect7.5}.

Our third objective is to establish several sufficient conditions for vanishing moment properties of the form $T^*(x^\alpha)=0$.  We show that under slightly stronger initial assumptions on $T$, the results pertaining to $|\nabla|^{-s}T|\nabla|^t$ and boundedness of $T$ discussed in the last two paragraphs are not only necessary for $T^*(x^\alpha)=0$ conditions, but also sufficient.  Hence combining the results obtained in the direction of our second and third goals, we prove $T1$-type theorems that provide necessary and sufficient conditions for boundedness on many classes of distribution spaces.  Furthermore, this verifies that the vanishing moment conditions $T^*(x^\alpha)=0$ for $T$ are necessary and sufficient to well-define $|\nabla|^{-s}T|\nabla|^t$ as a singular integral operator.  The results pertaining to sufficiency conditions for $T^*(x^\alpha)=0$ are in Theorem \ref{t:T1sufficient}, and $T1$-type theorems are discussed in Section \ref{Sect7.1}.

There are some general insights about sufficient conditions for vanishing moments of the form $T^*(x^\alpha)=0$ that can be gained for the results associated to our third goal.  It is well known that if $T$ is a Calder\'on-Zygmund operator that can be extended to a bounded operator on the weighted Hardy spaces $H^p$ for $p\leq1$, then $T^*(x^\alpha)=0$ for appropriate $\alpha\in\N_0^n$ depending on the size of $p$.  We refer to $p$ here as a Lebesgue parameter or Lebesgue index since $H^p$ is defined in terms of an $L^p$ norm.  Thus, if $T$ is bounded on distribution spaces with small enough Lebesgue index $p$, then $T^*$ must vanish on polynomials up to some degree.  This is one way to formulate sufficient conditions for $T^*(x^\alpha)=0$.  We show that if $T$ is bounded on spaces $\dot W^{-t,p}$ for certain ranges of $t>0$ and $1<p<\infty$, then $T^*(x^\alpha)=0$; the same holds for boundedness for Triebel-Lizorkin and Besov spaces.  This shows that boundedness on negative smoothness index spaces provide another way to formulate sufficient conditions for $T^*(x^\alpha)=0$.  Working formally, it then also follow by duality that if $T$ is bounded on positive index smoothness spaces, then $T(x^\alpha)=0$.  These two ways to formulate sufficient conditions for vanishing moments are well-understood, for example some results along these lines can be found in \cite{AM,CMbook,HartLu1}.  We provide two other types of sufficient conditions for $T^*(x^\alpha)=0$.  One is to require $T$ to be bounded on weighted distribution spaces where the weights are outside the natural weight class for the Lebesgue index of the space.  For instance, if $T$ is bounded on $H^2_w$ for all Muckenhoupt weights $w\in A_\infty$, then $T^*(x^\alpha)=0$ for all $\alpha\in\N_0^n$.  Similar results hold for other Triebel-Lizorkin spaces, for Lebesgue indices other than $2$, and for $A_q$ in place of $A_\infty$.  The final way we formulate sufficient conditions for $T^*(x^\alpha)=0$ is by requiring the operator $|\nabla|^{-s}T|\nabla|^t$ to agree modulo polynomials with operators in $SIO_{\nu+t-s}$, which of course are closely related to the boundedness of $T$ on distribution spaces, but nonetheless provide another sufficient condition for $T^*(x^\alpha)=0$.  All of these approaches to formulating sufficient conditions for $T^*(x^\alpha)=0$ are illustrated in Theorem \ref{t:T1sufficient}.

Finally, in Section \ref{Sect7} we will provide several applications.  Our first application, in Section \ref{Sect7.1}, is a $T1$ type theorem that extends the boundedness of a Calder\'on-Zygmund operator outside of the realm of Lebesgue spaces and for operators of arbitrary order $\nu\in\R$.  In Corollary \ref{c:CZvequiv}, we impose a little more on a given operator a priori, that $T$ belong to $CZO_\nu$ rather than just $SIO_\nu$, and in doing so we obtain necessary and sufficient conditions using some of the results from Sections \ref{Sect4}-\ref{Sect6}.

In Section \ref{Sect7.2}, we verify that pseudodifferential operators with symbols in the forbidden class $S_{1,1}^0$ can be treated with our results.  We show that such operators that satisfy $T(x^\alpha)=0$ type conditions can be extended to bounded operators on several smooth distribution spaces.  In particular, we apply our result to typical examples of operators in this forbidden class that are not $L^2$-bounded, as well as their transposes.

In Section \ref{Sect7.3}, we construct some paraproduct operators that belong to $SIO_\nu$, for any given $\nu\in\R$, to which we can apply our operator calculus and boundedness results.  Furthermore, we construct paraproducts in $SIO_0$ that are not $L^2$-bounded, but are bounded on homogeneous Sobolev spaces $\dot W^{-t,2}$ for all $t>0$, as well as other negative smoothness indexed spaces.  Hence we are outside of the class of Calder\'on-Zygmund operators, but still obtain several boundedness results.  The paraproducts we construct are of interest in their own right as well.  In form and function, they resemble the Bony paraproduct, however we construct them for any class $SIO_\nu$ with $\nu\in\R$, and we construct them to reproduce higher order moments as apposed to just the typical condition $\Pi_b1=b$.  See \eqref{Pib} for the definition of these paraproducts, as well as Corollary \ref{c:paraproducts} and Lemma \ref{l:paraproducts} for more information on the relevant properties they satisfy.

In Section \ref{Sect7.4}, we provide a decomposition of operators in $SIO_\nu$ into two terms, an oscillation-preserving term and a regularity-preserving term.  Roughly speaking, we show in Theorem \ref{t:Smooth+Oscillating} that under some mild moment conditions on $T\in SIO_\nu$, we can write $T=S+O$ where $S,O\in SIO_\nu$, $S$ is bounded on $\dot W^{t,p}$ for a range of $t>0$ and $1<p<\infty$, and $O$ is bounded on $\dot W^{-t,p}$ for all $t>0$ and $1<p<\infty$.  We actually show that our decomposition satisfies the conditions $S(x^\alpha)=0$ for several values of $\alpha\in\N_0^n$ and $O^*(x^\alpha)=0$ for all $\alpha\in\N_0^n$.  Then by our results in Section \ref{Sect5}, we obtain boundedness results for both $S$ and $O$ on different classes of spaces.  In some senses, Theorem \ref{t:Smooth+Oscillating} describes how any operator $T\in SIO_\nu$ can be decomposed $T=S+O$, where $S$ behaves like a convolution operator with respect to smoothness properties and $O$ behaves like a convolution operator with respect to oscillatory properties.  See Section \ref{Sect7.4} for more information on this.  It should be noted that this decomposition is valid for operators $T\in SIO_\nu$ that, once again, are not bounded from $\dot W^{\nu,2}$ into $L^2$.

In Section \ref{Sect7.5}, we prove smooth and oscillatory sparse domination principles for  operators in $SIO_\nu$.  This application is included to demonstrate the following notion.  We expend a lot of effort to provide conditions for an operator $T\in SIO_\nu$ that imply $|\nabla|^{-(\nu+t)}T|\nabla|^{t}$ is a Calder\'on-Zygmund operator (modulo polynomials).  Hence we can obtain new results for any such $T$ by applying existing results from Calder\'on-Zygmund theory to $|\nabla|^{-(\nu+t)}T|\nabla|^{t}$.  In a way, our restricted operator calculus allows us to translate $CZO_0$ theory to $SIO_\nu$ theory for $\nu\neq0$.  We demonstrate this principle through the sparse domination principle for $SIO_\nu$ in Corollary \ref{c:sparse}.

In Section \ref{Sect7.6}, we develop some new operator calculus results.  It appears that this is the first operator algebra that includes operators of non-convolution type containing hypersingular and fractional integral operators.  Furthermore, in Theorem \ref{t:operatoralgebra} we describe several operator algebras that include operators of different singularities.  Some are made up of differential operators, fractional integral operators, and/or order-zero operators in various combinations.

This article is organized as follows.  In Section \ref{Sect2}, we provide several definitions, notation, and preliminary results.  Section \ref{Sect3} contains the bulk of the work of truncating and approximating singular integral operators, and provides crucial support for the results in the sections that follow.  In Section \ref{Sect4}, we formulate our restricted operator calculus by verifying $|\nabla|^{-s}T|\nabla|^t$ has a $(\nu+t-s)$-order standard function kernel and satisfies certain boundedness properties for appropriate $T\in SIO_\nu$.  Section \ref{Sect5} is dedicated to proving several boundedness results for operators $T\in SIO_\nu$ that satisfy $T^*(x^\alpha)=0$ vanishing moment conditions, and Section \ref{Sect6} provides sufficient conditions for $T^*(x^\alpha)=0$.  Finally, we present several applications in Section \ref{Sect7}.

\section{Preliminaries, Definitions, and Notation}\label{Sect2}

Let $\S$ be the Schwartz class, $\S_P$ be the subspace of $\S$ made up of functions with vanishing moments of all order up to $P$, and $\S_\infty$ be the intersection of $\S_P$ for all $P\in\N$.  We give $\S$ the standard Schwartz semi-norm topology defined via
\begin{align*}
\rho_{\alpha,\beta}(f)=\sup_{x\in\R^n}|x^\alpha\cdot D^\beta f(x)|.
\end{align*}
It is easy to verify that $\S_P$ for $P\in\N_0$ and $\S_\infty$ are closed subspaces of $\S$.  Hence we can give $\S_P$ and $\S_\infty$ the Frech\'et topology endowed by the Schwartz semi-norms for $\S$.  Let $\S'$, $\S_P'$, and $\S_\infty'$ be the dual spaces of $\S$, $\S_P$, and $\S_\infty$ respectively, which we refer to as tempered distributions, tempered distributions modulo polynomials of degree $P$, and tempered distributions modulo polynomials, respectively.  Let $\mathcal D=C_0^\infty$ be the space of smooth compactly supported functions, and define $\mathcal D_P$ to be the subspace of $\mathcal D$ made up of all function with vanishing moments up to order $P$.  We endow $\mathcal D$ with the topology characterized by the following sequential convergence:  for $f_j,f\in\mathcal D$, we say $f_j\rightarrow f$ in $\mathcal D$ if there exists a compact set $K\subset\R^n$ such that $\supp(f),\supp(f_j)\subset K$ for all $j$ and $D^\alpha f_j\rightarrow D^\alpha f$ uniformly as $j\rightarrow\infty$ for all $\alpha\in\N_0^n$.  It follows that $\mathcal D_P$ is a closed subspace of $\mathcal D$ for any $P\in\N_0$, and hence we endow $\mathcal D_P$ with the topology inherited from $\mathcal D$.  Let $\mathcal D'$ and $\mathcal D_P'$ be the dual spaces of $\mathcal D$ and $\mathcal D_P$, respectively, which we refer to as distributions and distributions modulo polynomials of degree at most $M$.

For $1<p<\infty$, a non-negative locally integrable function $w$ belongs to the Muckenhoupt weight class $A_p$ if
\begin{align*}
[w]_{A_p}=\sup_Q\(\frac{1}{|Q|}\int_Qw(x)dx\)\(\frac{1}{|Q|}\int_Qw(x)^{-p'/p}dx\)^{p/p'}<\infty,
\end{align*}
where the supremum is taken over all cubes $Q\subset\R^n$, and $w$ belongs to $A_1$ if there exists a constant $C>0$ such that $\mathcal Mw(x)\leq Cw(x)$, where $\mathcal M$ is the Hardy-Littlewood maximal operator.  Also define $A_\infty$ to be the union of all $A_p$ for $1\leq p<\infty$.

Let $\psi\in\S_\infty$ such that $\widehat\psi$ is supported in the annulus $1/2<|\xi|<2$ and $\widehat\psi(\xi)\geq c>0$ for some $c>0$ and all $3/5<|\xi|<5/3$.  For $w\in A_\infty$, define $\dot F_{p,w}^{s,q}$ to be the collection of $f\in\S_\infty'$ such that
\begin{align*}
||f||_{\dot F_{p,w}^{s,q}}=\left|\left|\(\sum_{k\in\Z}(2^{sk}|\psi_k*f|)^q\)^{1/q}\right|\right|_{L^p_w}<\infty
\end{align*}
for $0<p,q<\infty$ and $s\in\R$, and define $\dot B_{p,w}^{s,q}$ to be the collection of all $f\in\S_\infty'$ such that
\begin{align*}
||f||_{\dot B_{p,w}^{s,q}}=\(\sum_{k\in\Z}(2^{sk}||\psi_k*f||_{L^p_w})^q\)^{1/q}<\infty
\end{align*}
for $0<p\leq\infty$, $0<q<\infty$, and $s\in\R$.  Also define $\dot F_{\infty,w}^{s,q}$ to be the collection of all $f\in\S_\infty'$ such that
\begin{align*}
\|f\|_{\dot F_{\infty,w}^{s,q}}=\sup_Q\(\frac{1}{w(Q)}\int_Q\sum_{k\in\Z:2^{-k}\leq\ell(Q)}(2^{sk}|\psi_k*f(x)|)^q\)^{1/q}<\infty,
\end{align*}
where the supremum is taking over all cubes $Q\subset\R^n$ with sides parallel to the axes and $\ell(Q)$ denotes the side length of $Q$.  Finally define $\dot B_{\infty,w}^{s,\infty}=\dot B_\infty^{s,\infty}$ for $s\in\R$ to be the collection of $f\in\S_\infty'$ such that
\begin{align*}
\|f\|_{\dot B_\infty^{s,\infty}}=\sup_{k\in\Z}2^{sk}\|\psi_k*f\|_{L^\infty}<\infty.
\end{align*}
Taking these spaces modulo polynomials makes them Banach spaces for $1\leq p,q<\infty$ and $s\in\R$ and quasi-Banach spaces when $0<p,q<\infty$ and $s\in\R$.  Note that by the work in \cite{FJ2} for the unweighted setting and \cite{Bui} for the weighted setting, it follows that $\S_\infty$ is dense in $\dot F_p^{s,q}$ for all $0<p,q<\infty$ and $s\in\R$.  It also follows that $H^p_w=L^p_w=\dot F^{0,2}_{p,w}$ for all $1<p<\infty$ and $w\in A_p$, $H^p_w=\dot F^{0,2}_{p,w}$ for all $0<p<\infty$ and $w\in A_\infty$, and $\dot W^{s,p}_w=\dot F_{p,w}^{s,2}$ for all $1<p<\infty$ and $w\in A_p$.  Here $L^p_w$ denote weighted Lebesgue spaces, $H^p_w$ denote weighted Hardy spaces, and $\dot W^{s,p}_w$ denote weighted homogeneous Sobolev spaces.  Even more, $\dot B_\infty^{s,\infty}=\dot\Lambda_s$ is the (homogeneous) space of Lipschitz functions when $s>0$ is not an integer.  When $s\in\N$, $B_\infty^{s,\infty}$ is the Zygmund class of smooth functions, which strictly contain $\dot\Lambda_s$; see \cite{Z} for more on Zygmund's smooth functions.  For $s=0$, the space $\dot B_\infty^{0,\infty}$ is sometimes referred to as the Bloch space, and it closely related to certain Bergman spaces; see for example \cite{CMbook} for more on this.

Finally, we note also that $\dot F_{\infty,w}^{s,2}=\dot F_{\infty}^{s,2}=I_s(BMO)$ for all $s>0$ and $w\in A_\infty$, which was proved in \cite{HO}.  Here $I_s(BMO)$ are Sobolev-$BMO$ spaces for $s>0$, and we take the convention $I_0(BMO)=BMO$.  See \cite{N,Str1,Str2,HO} for more information on these spaces.

Let $X$ be a closed subspace of $\S(\R^n)$.  We say that a linear operator $T$ mapping $X$ into $\S'(\R^n)$ is continuous if there exists a distribution kernel $\mathcal K\in\S'(\R^{2n})$ such that
\begin{align*}
\<Tf,g\>=\<\mathcal K,g\otimes f\>=\int_{\R^{2n}}\mathcal K(x,y)g(x)f(y)dy\,dx
\end{align*}
for all $f\in X$ and $g\in\S(\R^n)$.  Here and throughout this article, any integral that has $\mathcal K(x,y)$ in the integrand should be interpreted as a dual pairing between $\S'(\R^{2n})$ and $\S(\R^{2n})$.  We will use this notion of continuity when $X$ is $\S$ and $\S_P$ for $P\in\N_0$ at various points throughout the article.  It is obvious that continuity from $\S$ into $\S'$ implies continuity from $\S_P$ into $\S'$ for $P\in\N_0$, which implies continuity from $\S_{P+1}$ into $\S'$ and from $\S_\infty$ into $\S'$.

We consider operators that are continuous from $\S_M$ into $\S'$ since it make it easier in some situations to initially define and work with operators.  For example, consider the negative index derivative operator $|\nabla|^{-\nu}f$ defined via the Fourier multiplier $|\xi|^{-\nu}$ for $\nu>0$.  For $0<\nu<n$, it is easy to define $|\nabla|^{-\nu}f$ for $f\in\S$ since $|\xi|^{-\nu}$ is locally integrable for such $\nu$, but it is a little more tedious to define $|\nabla|^{-\nu}$ when $\nu\geq n$.  Since we allow for our operators to be defined a priori only on $\S_P$ for some $P\in\N_0$, it is much easier to work with such operators.  For any $\nu>0$, we choose $P\geq \nu$, and it follows that $\widehat{|\nabla|^{-\nu}f}(\xi)$ is uniformly bounded for $f\in\S_P$.  So $|\nabla|^{-\nu}$ trivially defines a continuous operator from $\S_P$ into $\S'$ as long as $P\geq\nu$.  This type of issue is less severe for the fractional derivative operator $|\nabla|^\nu$, defined in the same way via the Fourier multiplier $|\xi|^\nu$, but using $\S_P$ rather than $\S$ may still be of value.  This is because $|\xi|^\nu$ is not smooth at the origin (for certain $\nu>0$), and requiring $f\in\S_P$ for some $P\in\N_0$ smooths this non-regularity, at least to some degree.

Assuming that $T$ is continuous from $\S_P$ into $\S'$ makes defining the transpose of $T$ a little tricky.  In this paper we will impose on a given operator $T$ that both $T$ and $T^*$ are continuous from $\S_P$ into $\S'$ for some $P\in\N$.  By this we mean that $T$ is continuous from $\S_P$ into $\S'$, and there exists another operator $S$ that is also continuous from $\S_P$ into $\S'$ such that $\<Tf,g\>=\<Sg,f\>$ for all $f,g\in\S_P$.  We call this operator $S=T^*$ the transpose of $T$.  It should be noted that we assume that $T^*$ is continuous from $\S_P$ into $\S'$.  This does not necessarily follow from the continuity of $T$ from $\S_P$ into $\S'$.

\begin{definition}
Let $\nu\in\R$, $M\geq0$ be an integer, and $0<\gamma\leq1$.  A linear operator $T$ is in the class of $\nu$-order Singular Integral Operators, denoted $T\in SIO_\nu(M+\gamma)$, if $T$ and $T^*$ are continuous from $\S_P$ into $\S'$ from some $P\in\N_0$, there is a kernel function $K(x,y)$ such that
\begin{align*}
\<Tf,g\>=\int_{\R^{2n}}K(x,y)f(y)g(x)dy\,dx
\end{align*}
for any pair $(f,g)\in \mathcal D_P\times C_0^\infty$ or $(f,g)\in C_0^\infty\times \mathcal D_P$ with disjoint support,
\begin{align*}
|D_x^\alpha D_y^\beta K(x,y)|\less\frac{1}{|x-y|^{n+\nu+|\alpha|+|\beta|}}
\end{align*}
for all $x\neq y$ and $\alpha,\beta\in\N_0^n$ with $|\alpha|+|\beta|\leq M$, and
\begin{align*}
|D_x^\alpha D_y^\beta K(x+h,y)-D_x^\alpha D_y^\beta K(x,y)|+|D_x^\alpha D_y^\beta K(x,y)-D_x^\alpha D_y^\beta K(x,y+h)|\less\frac{|h|^\gamma}{|x-y|^{n+\nu+M+\gamma}}
\end{align*}
for all $x,y,h\in\R^n$ satisfying $|h|<|x-y|/2$ and $\alpha,\beta\in\N_0^n$ satisfying $|\alpha|+|\beta|=M$.  We will refer to $SIO_\nu$ as the union of all $SIO_\nu(M+\gamma)$ for $M\in\N_0^n$ and $0<\gamma\leq1$ and $SIO_\nu(\infty)$ as the intersection of $SIO_\nu(M+\gamma)$ for $M\in\N_0^n$ and $0<\gamma\leq1$.
\end{definition}

Note that the kernel representation imposed on $T\in SIO_\nu(M+\gamma)$ implies that $Tf$ can be realized as a function for $f\in\mathcal D_P$ and $x\notin\supp(f)$.  Indeed, for such $f$, $Tf\in\S'$ by assumption.  Then by taking appropriate kernel functions $\varphi_k\in C_0^\infty$ that generate an approximation to the identity, the kernel representation of $T$ and dominated convergence imply that $\varphi_k*Tf(x)=\<Tf,\varphi^x\>$ converges as $k\rightarrow\infty$ as long as $x\notin\supp(f)$.  Hence we can realize the distribution $Tf\in\S'$ pointwise as
$$Tf(x)=\lim_{k\rightarrow\infty}\varphi_k*Tf(x)=\int_{\R^n}K(x,y)f(y)dy$$
when $f\in\mathcal D_P$ and $x\notin\supp(f)$.

\begin{definition}
For $\nu\in\R$, $M\in\N_0$, and $0<\gamma\leq1$, an operator $T\in SIO_\nu(M+\gamma)$ is a $\nu$-order Calder\'on-Zygmund Operator, denoted $T\in CZO_\nu(M+\gamma)$, if $T$ can be continuously extended to an operator from $\dot W^{\nu,p}$ into $L^p$ for all $1<p<\infty$.  We will also refer to $CZO_\nu$ as the union of all $CZO_\nu(M+\gamma)$ for $M\in\N_0$ and $0<\gamma\leq1$ and  $CZO_\nu(\infty)$ as the intersection of all $CZO_\nu(M+\gamma)$ for $M\in\N_0$ and $0<\gamma\leq1$.
\end{definition}

\begin{definition}
An operator $T\in SIO_\nu$ satisfies the $\nu$-order Weak Boundedness Property (WBP$_\nu$) if there are integers $M,N\geq0$ and a constant $C>0$ such that
\begin{align}\label{WBPv}
\left|\<T\psi,\varphi\>\right|+\left|\<T^*\psi,\varphi\>\right|\leq C|B|^{1-\nu/n}
\end{align}
for any ball $B\subset\R^n$, $\psi\in\mathcal D_M$ and $\varphi\in C_0^\infty$ with $\supp(\psi)\cup\supp(\varphi)\subset B$ and $||D^\alpha\psi||_{L^\infty},||D^\alpha\varphi||_{L^\infty}\leq |B|^{-|\alpha|/n}$ for $|\alpha|\leq N$.
\end{definition}

\begin{remark}
Note that the definition of $SIO_\nu$ and $WBP_\nu$ are both symmetric under $T$ and $T^*$, but this is not always the case for $CZO_\nu$.  When $\nu\neq0$, $T\in CZO_\nu$ and $T^*\in CZO_\nu$ are not equivalent conditions.  On the other hand when $\nu=0$, $CZO_0$ is closed under transposes and actually collapses to the traditional definition of a Calder\'on-Zygmund operator.
\end{remark}

For a function $F$ on $\R^n$, $x_0\in\R^n$, and an integer $L\geq0$, we define the Taylor polynomial (sometimes also called the jet) centered at $x_0$ by
\begin{align*}
J_{x_0}^L[F](x)=\sum_{|\alpha|\leq L}\frac{D^\alpha F(x_0)}{\alpha!}(x-x_0)^\alpha.
\end{align*}

\begin{definition}\label{d:Tx}
Let $\nu\in\R$, $M\geq0$ be an integer, $0<\gamma\leq1$ and $T\in SIO_\nu(M+\gamma)$.  Let $P\in\N_0$ be the integer specified for the kernel representation in the definition of $T\in SIO_\nu(M+\gamma)$, and without loss of generality assume $P\geq M+|\nu|$.  Let $\eta\in\mathcal D_{2P}$ such that $\eta=1$ on $B(0,1)$, and define $\eta_R(x)=\eta(x/R)$ for $R>0$ and $x\in\R^n$.  For $\alpha\in\N_0^n$ with $|\alpha|< M+\nu+\gamma$, define  $T^*(x^\alpha)\in\mathcal D_{2P}'$ by
\begin{align*}
\<T(x^\alpha),\psi\>=\lim_{R\rightarrow\infty}\<T(x^\alpha\cdot\eta_R),\psi\>=\lim_{R\rightarrow\infty}\int_{\R^{2n}}\mathcal K(x,y)y^\alpha \eta_R(y)\psi(x)dy\,dx
\end{align*}
for $\psi\in\mathcal D_P$.  Here $\mathcal K\in\S'(\R^{2n})$ denotes the distributional kernel of $T$, and the integrals above should be interpreted as the dual pairing between $\S(\R^{2n})$ and $\S'(\R^{2n})$.  Note that if $M+\nu+\gamma<0$, then we do not define $T(x^\alpha)$ for any $\alpha\in\N_0$.  Furthermore, we will simply state that $T(x^\alpha)=0$ to mean that there exists an integer $P\geq0$ so that $T(x^\alpha)=0$ in $\mathcal D_P'$.
\end{definition}

It is a somewhat standard argument by now to show that $T(x^\alpha)$ is well-defined.  We provide a brief sketch of this since our definition is slightly different than others that have appeared; compare, for example, with the corresponding definitions in \cite{FTW,FHJW,T,Hart2,HartLu1,HO}.

\begin{proof}
Let $\eta_R\in\mathcal D_{2P}$ be as in Definition \ref{d:Tx}.  Let $\psi\in\mathcal D_{P}$ with $\supp(\psi)\subset B(0,R_0/8)$.  For $|\alpha|< M+\nu+\gamma$ and $R>0$,
\begin{align*}
\<T(x^\alpha\cdot\eta_R),\psi\>&=\<T(x^\alpha\cdot\eta_{R_0}),\psi\>\\
&\hspace{.5cm}+\lim_{R\rightarrow\infty}\int_{\R^{2n}}\(K(x,y)-J_0^M[K(\cdot,y)](x)\)y^\alpha (\eta_R(y)-\eta_{R_0}(y))\psi(x)dy\,dx\\
&=\<T(x^\alpha\cdot\eta_{R_0}),\psi\>+\int_{\R^{2n}}\(K(x,y)-J_0^M[K(\cdot,y)](x)\)y^\alpha (1-\eta_{R_0}(y))\psi(x)dy\,dx.
\end{align*}
The first term is well-defined since $T$ is maps $\S_P$ into $\S'$ and $\eta\in\mathcal D_{2P}$ implies $x^\alpha\cdot\eta_R\in\mathcal D_P$ for $|\alpha|<M+\nu+\gamma$.  The second term is also well-defined by the support properties of $\eta_R(x)-\eta_{R_0}(x)$, the kernel representation of $T$, and the vanishing moment properties of $\psi$.  In fact, dominated convergence can be applied to the second term since
\begin{align*}
\left|\(K(x,y)-J_0^M[K(\cdot,y)](x)\)y^\alpha (\eta_R(y)-\eta_{R_0}(y))\right|\less\frac{R_0^{M+\gamma}}{(R_0+|y|)^{n+\nu+M+\gamma-|\alpha|}}
\end{align*}
for $x\in\supp(\psi)$.  Therefore $T(x^\alpha)$ is also well defined.  Furthermore, it is not hard to see that the definition of $T(x^\alpha)$ does not depend on the particular function $\eta_R\in\mathcal D_{2P}$ chosen in Definition \ref{d:Tx}.
\end{proof}

Throughout, we will use the notation $\Phi_k^N(x)=2^{kn}(1+2^k|x|)^{-N}$ for $N>0$, $k\in\Z$, and $x\in\R^n$.  It is well known that $\Phi_k^N*|f|(x)\less \mathcal Mf(x)$ for any locally integrable function $f$ and $N>n$, where the constant may depend on $N$ and $\mathcal M$ is the Hardy-Littlewood maximal operator.  It is also well known that $\Phi_k^N*\Phi_j^M(x)\less\Phi_{\min(j,k)}^{\min(M,N)}(x)$ for any $j,k\in\Z$, $M,N>0$ such that $M,N>n$, and the constant depends on $M,N$, but not on $j,k,x$.

The following lemmas are also well-known.  More information can be found for example in \cite{FJ2}.

\begin{lemma}\label{l:calderon}
Let $P\geq0$ be an integer.  There exist functions $\psi\in\mathcal D_P$ and $\widetilde\psi\in\S_\infty$ such that
\begin{align*}
f(x)=\sum_{k\in\Z}\psi_k*\widetilde\psi_k*f(x)
\end{align*}
in $\S_\infty$ for any $f\in\S_\infty$.  Furthermore, $\psi$ and $\widetilde\psi$ can be chosen to be radial.
\end{lemma}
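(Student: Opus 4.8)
The plan is to obtain the two functions $\psi\in\mathcal D_P$ and $\widetilde\psi\in\S_\infty$ by a Fourier-side construction. First I would start from any radial $\Theta\in\mathcal D$ supported in the ball $B(0,1)$ with $\int \Theta = 1$, and form the standard Littlewood--Paley bump $\varphi = \Theta - 2^{-n}\Theta(\cdot/2)$, so that $\varphi\in\mathcal D$ is radial, supported in $B(0,2)$, and has $\int\varphi = 0$; iterating the vanishing-moment gain one builds instead a radial $\psi\in\mathcal D_P$ supported in a fixed ball whose Fourier transform $\widehat\psi$ vanishes to order $P+1$ at the origin and is not identically zero on the dyadic annulus $1/2\le|\xi|\le 2$. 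Concretely one can take $\widehat\psi(\xi)=|\xi|^{2\lceil (P+1)/2\rceil}\,\widehat{\Theta}(\xi)$-type expressions realized by applying a power of the Laplacian to $\Theta$, which keeps compact support; the point is only that such a radial $\psi\in\mathcal D_P$ with $\widehat\psi$ not vanishing on the annulus exists.

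Next I would set, for $\xi\neq 0$,
\begin{align*}
A(\xi)=\sum_{k\in\Z}\bigl|\widehat\psi(2^{-k}\xi)\bigr|^2 .
\end{align*}
Because $\widehat\psi$ is continuous, vanishes to high order at $0$, is Schwartz (being the transform of a $\mathcal D$ function), and does not vanish identically on the annulus, one checks in the usual way that $0<c\le A(\xi)\le C<\infty$ for all $\xi\neq0$, that $A$ is $C^\infty$ away from the origin, and that $A$ is homogeneous of degree $0$ — hence all derivatives of $A$ on $\{\xi\neq0\}$ satisfy $|\partial^\gamma A(\xi)|\lesssim |\xi|^{-|\gamma|}$. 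Define $\widehat{\widetilde\psi}(\xi)=\overline{\widehat\psi(\xi)}/A(\xi)$ for $\xi\neq0$ and $\widehat{\widetilde\psi}(0)=0$. Then $\widehat{\widetilde\psi}$ is smooth on $\R^n$ (the high-order vanishing of $\widehat\psi$ at $0$ against the lower bound on $A$ flattens the quotient at the origin to all orders), is radial, and is rapidly decreasing with all derivatives rapidly decreasing, so $\widetilde\psi\in\S$; moreover $\widehat{\widetilde\psi}$ vanishes to infinite order at $0$, so $\widetilde\psi\in\S_\infty$. By construction
\begin{align*}
\sum_{k\in\Z}\widehat{\psi}(2^{-k}\xi)\,\widehat{\widetilde\psi}(2^{-k}\xi)=\frac{1}{A(\xi)}\sum_{k\in\Z}\bigl|\widehat\psi(2^{-k}\xi)\bigr|^2=1\qquad(\xi\neq0),
\end{align*}
which on the Fourier side is exactly the claimed identity, since for $f\in\S_\infty$ the transform $\widehat f$ vanishes near $0$ so the value of the multiplier at $\xi=0$ is irrelevant.

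The remaining work is to justify that the identity holds as stated, namely with convergence of the partial sums $\sum_{|k|\le K}\psi_k*\widetilde\psi_k*f$ to $f$ \emph{in the topology of} $\S_\infty$. For this I would argue on the Fourier side: since $f\in\S_\infty$, $\widehat f$ is supported away from the origin, say in $c_0\le|\xi|\le C_0$, so only finitely many $k$ (those with $2^k$ comparable to the relevant scale) contribute nontrivially to $\widehat\psi(2^{-k}\xi)\widehat{\widetilde\psi}(2^{-k}\xi)\widehat f(\xi)$ on that support — more precisely the tails are controlled using the high-order vanishing of $\widehat\psi$ at $0$ and the rapid decay of both factors, giving geometric decay of $\sup_\xi |\,\partial^\gamma[(\cdots)\widehat f]\,|$ in $|k|$ once $|k|$ is large. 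Summing these estimates shows the partial sums of $\sum_k \widehat\psi(2^{-k}\cdot)\widehat{\widetilde\psi}(2^{-k}\cdot)\widehat f$ converge to $\widehat f$ in $\S$, with limit supported away from $0$, hence converge in $\widehat{\S_\infty}$; applying the inverse Fourier transform, which is a topological isomorphism of $\S_\infty$, yields convergence of $\sum_k\psi_k*\widetilde\psi_k*f$ to $f$ in $\S_\infty$. Finally radiality of $\psi$ and $\widetilde\psi$ is immediate because $\Theta$, and hence $\widehat\psi$ and $A$, were chosen radial.

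The main obstacle is the smoothness and Schwartz-class membership of $\widetilde\psi$ at the origin: one must check carefully that dividing the compactly-supported-transform bump $\widehat\psi$ by the degree-zero homogeneous, smooth-away-from-origin, bounded-below function $A$ produces a function that is genuinely $C^\infty$ at $\xi=0$ (not merely continuous) with Schwartz decay — this is where the high-order vanishing of $\widehat\psi$ at the origin, forced by $\psi\in\mathcal D_P$, is essential, and it is the reason $\widetilde\psi$ lands in $\S_\infty$ rather than only in $\S$. Everything else is the routine Littlewood--Paley/Calderón reproducing formula machinery and the bookkeeping for $\S_\infty$-convergence.
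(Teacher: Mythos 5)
Your construction of $\psi$ is fine and the Fourier-side strategy is the standard one, but the definition $\widehat{\widetilde\psi}=\overline{\widehat\psi}/A$ does not produce a function in $\S_\infty$, and generically not even one in $\S$. Since $\psi\in\mathcal D_P$ is compactly supported, $\widehat\psi$ extends to an entire function (Paley--Wiener), so it vanishes at the origin only to the finite order $P+1$ you built in. As $A$ is bounded above, $|\widehat{\widetilde\psi}(\xi)|\geq|\widehat\psi(\xi)|/\|A\|_{L^\infty}$ also vanishes only to order $P+1$, so $\widetilde\psi$ has only finitely many vanishing moments and cannot lie in $\S_\infty$; your closing claim that the quotient ``vanishes to infinite order at $0$'' is impossible. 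Worse, $A$ is only invariant under the dyadic dilation $\xi\mapsto2\xi$, not genuinely homogeneous, so unless it is constant it has no continuous extension to $\xi=0$, and the resulting bounds $|\partial^\gamma(\overline{\widehat\psi}/A)(\xi)|\lesssim|\xi|^{P+1-|\gamma|}$ blow up for $|\gamma|>P+1$: the quotient is typically not $C^\infty$ at the origin, so $\widetilde\psi\notin\S$. This is exactly the obstacle you flag at the end, but it cannot be resolved the way you assert.

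The standard repair (essentially the Frazier--Jawerth construction the paper cites) is to put the frequency localization into $\widetilde\psi$ rather than dividing $\overline{\widehat\psi}$ by $A$: arrange $\widehat\psi\geq0$ with $\widehat\psi>0$ on $1/2\leq|\xi|\leq2$ (e.g.\ $\psi=(-\Delta)^m(\theta*\theta)$ with $\theta$ a radial real bump, so $\widehat\psi=c|\xi|^{2m}|\widehat\theta|^2$), pick a radial nonnegative $\Gamma\in C_0^\infty$ supported in $\{1/2<|\xi|<2\}$ and positive on $\{3/5\leq|\xi|\leq5/3\}$, and set $\widehat{\widetilde\psi}=\Gamma/D$ where $D(\xi)=\sum_j\widehat\psi(2^{-j}\xi)\Gamma(2^{-j}\xi)$ is smooth, dyadically invariant, and bounded above and below on $\{\xi\neq0\}$. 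Then $\widehat{\widetilde\psi}\in C_0^\infty$ vanishes identically near the origin, so $\widetilde\psi\in\S_\infty$, and $\sum_k\widehat\psi(2^{-k}\xi)\widehat{\widetilde\psi}(2^{-k}\xi)=D(\xi)/D(\xi)=1$ for $\xi\neq0$. One smaller correction to your convergence argument: $f\in\S_\infty$ does not mean $\widehat f$ is supported away from the origin, only that $\widehat f$ and all its derivatives vanish to infinite order there; the tail estimates for $k\to+\infty$ must use that infinite-order vanishing (or the annular support of $\widehat{\widetilde\psi}$), not compact support of $\widehat f$. With these two changes the proof goes through.
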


\begin{lemma}\label{l:wavelet}
Let $P\geq0$ be an integer.  There exist functions $\phi\in\mathcal D_P$ and $\widetilde\phi\in\S_\infty$ and an integer $N_0\in\Z$ such that
\begin{align*}
f(x)=\sum_{k\in\Z}\sum_{Q:\ell(Q)=2^{-(k+N_0)}}\widetilde\phi_k*f(c_Q)\phi_k(x-c_Q)
\end{align*}
in $\S_\infty$ for any $f\in\S_\infty$.  The sum in $Q$ here is over all dyadic cubes of side length $2^{-(k+N_0)}$ and $c_Q$ denotes the lower-left hand corner of $Q$.  Furthermore, $\widetilde\phi$ and $\phi$ can be chosen to be radial.
\end{lemma}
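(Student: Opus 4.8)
The plan is to deduce the discrete identity from the continuous Calder\'on reproducing formula of Lemma \ref{l:calderon} by a sampling argument; this is essentially the $\varphi$-transform construction of Frazier and Jawerth \cite{FJ2}. The first step is to sharpen Lemma \ref{l:calderon} so that the companion of $\psi$ is \emph{band-limited}: one can take a radial $\psi\in\mathcal D_P$ (e.g.\ a suitably rescaled iterated Laplacian of a radial bump, which supplies the needed vanishing moments while keeping $|\widehat\psi|\geq c>0$ on the annulus $\{1/2\leq|\xi|\leq 2\}$) and set $\widehat{\widetilde\phi}=\theta/\widehat\psi$, where $\theta\in\mathcal D$ is a radial Littlewood--Paley function supported in that annulus with $\sum_{k}\theta(2^{-k}\xi)=1$ for $\xi\neq0$. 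Then $\widetilde\phi\in\S_\infty$ is radial and band-limited, $\sum_k\widehat\psi(2^{-k}\xi)\widehat{\widetilde\phi}(2^{-k}\xi)=1$, and $f=\sum_k\psi_k*\widetilde\phi_k*f$ in $\S_\infty$ for $f\in\S_\infty$.

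The second step is the discretization. For each fixed $k$, $g_k:=\widetilde\phi_k*f$ is band-limited with $\widehat{g_k}$ supported in $\{2^{k-1}\leq|\xi|\leq 2^{k+1}\}$. Choosing $N_0\in\Z$ large enough that this ball sits comfortably inside the cube $[-\pi 2^{k+N_0},\pi 2^{k+N_0}]^n$, Poisson summation gives the exact reconstruction
$$g_k(x)=2^{-(k+N_0)n}\sum_{m\in\Z^n}g_k\big(2^{-(k+N_0)}m\big)\,\zeta_k\big(x-2^{-(k+N_0)}m\big),$$
where $\zeta$ is a fixed radial Schwartz function with $\widehat\zeta\equiv1$ on $\{|\xi|\leq 2\}$ and $\widehat\zeta$ supported inside the relevant cube. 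Substituting this into $f=\sum_k\psi_k*g_k$, using $\psi_k*\zeta_k=(\psi*\zeta)_k$, and indexing the lattice points $2^{-(k+N_0)}m$ by the lower-left corners $c_Q$ of the dyadic cubes $Q$ with $\ell(Q)=2^{-(k+N_0)}$, one arrives at
$$f(x)=\sum_{k\in\Z}\,\sum_{Q:\,\ell(Q)=2^{-(k+N_0)}}|Q|\,\big(\widetilde\phi_k*f\big)(c_Q)\,(\psi*\zeta)_k(x-c_Q),$$
which, after absorbing the normalizing factor into the synthesis function $\phi$ (radial, with $P$ vanishing moments), is the claimed formula. The sampling argument as written produces a $\phi$ that is Schwartz-class rather than compactly supported; to get $\phi\in\mathcal D_P$ exactly, one instead starts from a Calder\'on formula whose synthesis side already lies in $\mathcal D_P$ and discretizes it with a Neumann-series (perturbation) correction, in the spirit of Han's discrete Calder\'on reproducing formula.

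The remaining, and main, task is to show the double series converges in $\S_\infty$ (and that the interchange of $\psi_k*$ with the lattice sum above is legitimate), which I would do with the almost-orthogonality machinery recalled in the text. Because $f\in\S_\infty$, the function $\widetilde\phi_k*f$ decays rapidly in space uniformly in $k$ and decays in $k$ at both ends --- as $k\to+\infty$ from the smoothness and Schwartz decay of $f$, and as $k\to-\infty$ from the vanishing moments of $\widetilde\phi$ --- yielding, for any $L,M$, a bound $|(\widetilde\phi_k*f)(c_Q)|\lesssim_{L,M}2^{-|k|L}\,\Phi_{\min(k,0)}^{M}(c_Q)$. Pairing this with the pointwise/derivative bounds $|D^\beta\phi_k(x-c_Q)|\lesssim 2^{kn+k|\beta|}\Phi_k^{M}(x-c_Q)$, the estimate $\Phi_k^N*\Phi_j^M\lesssim\Phi_{\min(j,k)}^{\min(M,N)}$, and the Riemann-sum bound $\sum_{Q:\,\ell(Q)=2^{-(k+N_0)}}\Phi_k^{M}(x-c_Q)\lesssim 1$, one sums over $Q$ for fixed $k$ and then over $k$, the $k$-sum converging geometrically thanks to the $2^{-|k|L}$ factor; this controls every Schwartz seminorm of the partial sums and tails. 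The delicate point --- the main obstacle --- is exactly this bookkeeping: getting the two-sided decay in $k$ (from vanishing moments at one end and smoothness at the other) to interact with the almost-orthogonality estimates so as to produce a bound summable in each Schwartz seminorm. The algebraic manipulations of the first two steps are routine once the ingredients are fixed.
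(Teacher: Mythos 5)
The paper offers no proof of this lemma; it is quoted as well known with a pointer to \cite{FJ2}, and your argument is exactly the Frazier--Jawerth $\varphi$-transform route that the citation refers to. Steps 1 and 2 (a band-limited dual via $\widehat{\widetilde\phi}=\theta/\widehat\psi$, Shannon sampling of the band-limited pieces $\widetilde\phi_k*f$, and the Nyquist choice of $N_0$) and the convergence bookkeeping in step 3 (two-sided geometric decay in $k$ coming from the vanishing moments on one side and smoothness on the other, combined with $\Phi_k^N*\Phi_j^M\lesssim\Phi_{\min(j,k)}^{\min(M,N)}$ and the lattice Riemann-sum bound) are all correct and standard.

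Two caveats. First, normalization: the identity you derive carries the factor $|Q|$, and since $|Q|\,2^{kn}=2^{-N_0n}$ while $\phi_k$ denotes the $L^1$-normalized dilate, that factor cannot be absorbed into a $k$-independent $\phi$; the displayed formula should be read with the $|Q|$ present (equivalently with $\phi_k(x-c_Q)$ interpreted as $\phi(2^k(x-c_Q))$ up to a constant), which is also how Lemma \ref{l:Mj1} is invoked later in the paper. Second, the genuine gap is the one you flag yourself: compact support of $\phi$. Within the sampling approach this is not a removable defect: exact reconstruction forces the aliasing terms $\widehat\phi(2^{-k}\xi)\,\widehat{\widetilde\phi}(2^{-k}\xi+2\pi 2^{N_0}l)$, $l\neq0$, to cancel, which requires the synthesis symbol to vanish on open neighborhoods of the shifted lattice, and an entire $\widehat\phi$ (i.e.\ $\phi\in\mathcal D_P$) cannot do that without vanishing identically. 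The Neumann-series repair you invoke (Han's discrete Calder\'on formula) does produce compactly supported synthesis bumps, but at the cost of replacing the coefficients $\widetilde\phi_k*f(c_Q)$ by non-convolution kernels, so it does not deliver the lemma in the literal form stated. In practice this is harmless for the paper: wherever Lemma \ref{l:wavelet} is used (e.g.\ in the proof of Theorem \ref{t:TriebelLizorkinBounds}, where the authors themselves only ask $\phi,\widetilde\phi\in\S$) only the rapid decay, derivative bounds, and $P$ vanishing moments of $\phi$ enter, so your band-limited $\phi=\psi*\zeta_{N_0}\in\S_\infty$ is an adequate substitute; you should simply say so rather than promise a compactly supported $\phi$ that the argument does not produce.
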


Throughout this article, we will choose convolution kernel functions $\psi$, $\widetilde\psi$, $\phi$, and $\widetilde\phi$ to be radial so that they are self-transpose; that is, so that $\<\psi_j*f,g\>=\<\psi_j*g,f\>$.  We will make the same convention when working with approximation to the identity operators and functions $\psi$ used to define Besov and Triebel-Lizorkin spaces.  Of course, this simplification is not necessary, but it eases the need for complicated notation.

Define
\begin{align*}
\mathcal M_j^rf(x)=\left\{\mathcal M\[\(\sum_{Q:\ell(Q)=2^{-(j+N_0)}}f(c_Q)\cdot \chi_Q\)^r\;\](x)\right\}^{1/r}.
\end{align*}

\begin{lemma}\label{l:Mj1}
Let $f:\R^n\rightarrow\C$ be a non-negative continuous function, $\mu>0$, and $\frac{n}{n+\mu}<r\leq1$.  Then
\begin{align*}
\sum_{Q:\ell(Q)=2^{-(j+N_0)}}|Q|\Phi_{\min(j,k)}^{n+\mu}(x-c_Q)f(c_Q)\less2^{\max(0,j-k)\mu}\mathcal M_j^rf(x)
\end{align*}
for all $x\in\R^n$.
\end{lemma}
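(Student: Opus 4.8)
The plan is to organize the sum over $Q$ according to the dyadic distance from $c_Q$ to $x$, estimate the kernel $\Phi_{\min(j,k)}^{n+\mu}(x-c_Q)$ crudely on each annular layer, and then convert the remaining sums of the shape $\sum_Q|Q|f(c_Q)$ over cubes lying in a ball into the maximal function $\mathcal M_j^rf(x)=\big(\mathcal M(g^r)(x)\big)^{1/r}$, where $g=\sum_{Q:\ell(Q)=2^{-(j+N_0)}}f(c_Q)\chi_Q$ is the step function built from the sample values $f(c_Q)$; note that $\mathcal M_j^rf(x)=\big(\mathcal M(g^r)(x)\big)^{1/r}$ is immediate from the definition of $\mathcal M_j^r$. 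Throughout set $m=\min(j,k)$, and observe that $m\le j$, so every cube $Q$ in the sum (all of side $2^{-(j+N_0)}$) is no coarser than the scale $2^{-m}$ carried by the kernel; this is what keeps the annular estimates below uniform.

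First I would split the cubes of side $2^{-(j+N_0)}$ into $\mathcal Q_0=\{Q:|x-c_Q|\le 2^{-m}\}$ and, for $l\ge1$, $\mathcal Q_l=\{Q:2^{l-1-m}<|x-c_Q|\le 2^{l-m}\}$. Since $\ell(Q)\lesssim 2^{-m}$, the union $E_l=\bigcup_{Q\in\mathcal Q_l}Q$ is contained in the ball $B_l=B(x,c\,2^{l-m})$ with $c$ depending only on $n$ and $N_0$, so $|B_l|\sim 2^{(l-m)n}$; and for $Q\in\mathcal Q_l$ one has $\Phi_m^{n+\mu}(x-c_Q)\lesssim 2^{mn}2^{-l(n+\mu)}$ (reading $2^{-l(n+\mu)}=1$ when $l=0$). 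Hence
\begin{align*}
\sum_Q|Q|\,\Phi_m^{n+\mu}(x-c_Q)f(c_Q)\lesssim\sum_{l\ge0}2^{mn}2^{-l(n+\mu)}\sum_{Q\in\mathcal Q_l}|Q|f(c_Q).
\end{align*}

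The delicate point — and the step I expect to be the main obstacle — is to bound $\sum_{Q\in\mathcal Q_l}|Q|f(c_Q)$ by a constant times $|B_l|^{1/r}\mathcal M_j^rf(x)$, up to a harmless power of $2^{j}$: there is no pointwise comparison $\int g\lesssim\big(\mathcal M(g^r)\big)^{1/r}$ for a generic nonnegative function, so the fixed cube size together with $r\le1$ must be used. Since $t\mapsto t^r$ is subadditive for $0<r\le1$, I would write $\sum_{Q\in\mathcal Q_l}|Q|f(c_Q)\le\big(\sum_{Q\in\mathcal Q_l}|Q|^rf(c_Q)^r\big)^{1/r}$; as every cube has the same measure $|Q|=2^{-n(j+N_0)}$, the identity $|Q|^r=|Q|\cdot 2^{(1-r)n(j+N_0)}$ turns this into $\big(2^{(1-r)n(j+N_0)}\int_{E_l}g^r\big)^{1/r}$, where I used $g^r=\sum_{Q'}f(c_{Q'})^r\chi_{Q'}$ a.e. Since $E_l\subset B_l$ and $B_l$ is centered at $x$, $\int_{E_l}g^r\le|B_l|\,\mathcal M(g^r)(x)$, and therefore $\sum_{Q\in\mathcal Q_l}|Q|f(c_Q)\lesssim 2^{(1-r)nj/r}|B_l|^{1/r}\mathcal M_j^rf(x)$.

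Substituting this back, the $l$-th summand is bounded by $\mathcal M_j^rf(x)$ times $2$ to the power $mn-l(n+\mu)+\tfrac{(1-r)nj}{r}+(l-m)\tfrac{n}{r}$, which I would rearrange as $mn\big(1-\tfrac{1}{r}\big)+\tfrac{(1-r)nj}{r}+l\big(\tfrac{n}{r}-n-\mu\big)$. The series in $l$ converges precisely because $r>\tfrac{n}{n+\mu}$ forces $\tfrac{n}{r}-n-\mu<0$, and summing it leaves the exponent $mn\big(1-\tfrac{1}{r}\big)+\tfrac{(1-r)nj}{r}=\tfrac{(1-r)n}{r}(j-m)=\tfrac{(1-r)n}{r}\max(0,j-k)$. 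The same hypothesis $r>\tfrac{n}{n+\mu}$ also gives $\tfrac{(1-r)n}{r}\le\mu$, so $2^{\frac{(1-r)n}{r}\max(0,j-k)}\le 2^{\mu\max(0,j-k)}$, which is the asserted bound (the inequality is vacuous when the left side is infinite, so $\mathcal M_j^rf(x)<\infty$ may be assumed). The only genuine care needed is the exponent bookkeeping that makes the factor $2^{(1-r)nj/r}$ coming from $|Q|^{r-1}$ cancel against the kernel scale $2^{m}$ to produce $j-m=\max(0,j-k)$; the annular kernel estimate, the subadditivity step, and the reduction to $\mathcal M$ are all routine.
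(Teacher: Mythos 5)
Your proof is correct, and it is the standard Frazier--Jawerth argument: dyadic annular decomposition of the kernel, the $r$-subadditivity trick $\sum a_Q\le(\sum a_Q^r)^{1/r}$ combined with the fixed cube size to convert the discrete sum into $\int_{B_l}g^r$, and the observation that $r>\tfrac{n}{n+\mu}$ both makes the geometric series converge and bounds the leftover exponent $\tfrac{(1-r)n}{r}(j-m)$ by $\mu\max(0,j-k)$. The paper does not prove this lemma but defers to \cite[Proposition 2.2]{HartLu1}, whose proof proceeds in essentially this same way, so nothing further is needed.
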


The properties of $\mathcal M_j^r$ in Lemma \ref{l:Mj1} can, at least in part, be attributed to Frazier and Jawerth, but can be found in several other places as well.  A proof of it, as stated here, can also be found in \cite[Proposition 2.2]{HartLu1}.

\begin{lemma}\label{l:Mj2}
For any $0<p,q<\infty$, $0<r<\min(p,q)$, $t\in\R$, $w\in A_{p/r}$ and $f\in\dot F_{p,w}^{t,q}$, we have
\begin{align*}
\left|\left|\(\sum_{j\in\Z}\[2^{tj}\mathcal M_j^r(\widetilde\phi_j*f)\]^q\)^{1/q}\right|\right|_{L^p_w}\less||f||_{\dot F_{p,w}^{t,q}},
\end{align*}
where $\widetilde\phi$ is chosen as in Lemma \ref{l:wavelet}.
\end{lemma}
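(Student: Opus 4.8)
The plan is to reduce the vector-valued maximal estimate for the operators $\mathcal M_j^r$ to the Fefferman–Stein vector-valued maximal inequality and to a wavelet characterization of $\dot F_{p,w}^{t,q}$. First I would fix $0<r<\min(p,q)$ with $w\in A_{p/r}$, and observe that by definition of $\mathcal M_j^r$,
\begin{align*}
2^{tj}\mathcal M_j^r(\widetilde\phi_j*f)(x)=\left\{\mathcal M\Bigl[\Bigl(\sum_{Q:\ell(Q)=2^{-(j+N_0)}}2^{tj}|\widetilde\phi_j*f(c_Q)|\,\chi_Q\Bigr)^r\Bigr](x)\right\}^{1/r},
\end{align*}
so that, writing $g_j=\sum_{Q:\ell(Q)=2^{-(j+N_0)}}2^{tj}|\widetilde\phi_j*f(c_Q)|\,\chi_Q$, the left-hand side equals $\|(\sum_j (\mathcal M[g_j^r])^{q/r})^{r/q}\|_{L^{p/r}_w}^{1/r}$. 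Since $p/r>1$, $q/r>1$, and $w\in A_{p/r}$, the weighted Fefferman–Stein inequality applies and bounds this by $\|(\sum_j g_j^q)^{1/q}\|_{L^p_w}$ up to a constant depending on $[w]_{A_{p/r}}$, $p$, $q$, $r$, $n$.

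Next I would identify $\|(\sum_j g_j^q)^{1/q}\|_{L^p_w}$ with a discrete Triebel–Lizorkin sequence norm of the wavelet coefficients $\{\widetilde\phi_j*f(c_Q)\}$. For $x\in Q$ with $\ell(Q)=2^{-(j+N_0)}$ one has $g_j(x)=2^{tj}|\widetilde\phi_j*f(c_Q)|$, so $(\sum_j g_j^q)^{1/q}$ is pointwise comparable to the standard "solid" $\dot f_{p,w}^{t,q}$ quasi-norm of the coefficient sequence $s_Q=|Q|^{-1/2}\int_Q |\widetilde\phi_j*f(c_Q)|$ (after the usual normalization). The needed fact — that this discrete norm is controlled by $\|f\|_{\dot F_{p,w}^{t,q}}$ — is precisely the wavelet/$\varphi$-transform characterization of weighted Triebel–Lizorkin spaces; in the unweighted case this is Frazier–Jawerth \cite{FJ2}, and in the weighted $A_\infty$ setting it is the corresponding result of Bui \cite{Bui} referenced earlier in the excerpt. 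Concretely, $\widetilde\phi_j*f(c_Q)$ is (a constant times) the inner product of $f$ against a smooth molecule adapted to $Q$, and the analysis-side estimate in those references gives exactly $\|\{\widetilde\phi_j*f(c_Q)\}\|_{\dot f_{p,w}^{t,q}}\lesssim \|f\|_{\dot F_{p,w}^{t,q}}$. Chaining the two displays yields the claimed inequality.

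The main obstacle is bookkeeping rather than anything deep: one must be careful that the quantity $(\sum_j g_j^q)^{1/q}$ really is dominated by (a fixed multiple of) the discrete $\dot f_{p,w}^{t,q}$-norm appearing in the Frazier–Jawerth/Bui characterization, since those characterizations are usually phrased with the analyzing bumps $\widetilde\phi_j$ having vanishing moments and the coefficients being genuine inner products $\langle f,\phi_Q\rangle$ rather than pointwise samples $\widetilde\phi_j*f(c_Q)$. This is reconciled by noting $\widetilde\phi\in\S_\infty$ (so $\widetilde\phi_j$ has all moments vanishing) and that $\widetilde\phi_j*f(c_Q)=\langle f,(\widetilde\phi_j)^{c_Q}\rangle$ where $(\widetilde\phi_j)^{c_Q}(y)=\widetilde\phi_j(c_Q-y)$ is, up to normalization by $|Q|^{1/2}$, a smooth molecule centered at $c_Q$ at scale $\ell(Q)$; the molecular estimates of \cite{FJ2,Bui} then apply. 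Alternatively, one can cite \cite[Proposition 2.2]{HartLu1} and the surrounding results, where an estimate of exactly this shape is established. The remaining ingredient — the weighted Fefferman–Stein maximal inequality on $L^{p/r}_w(\ell^{q/r})$ for $w\in A_{p/r}$ — is standard and requires no further comment.
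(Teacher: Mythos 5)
Your argument is correct and is exactly the route the paper indicates: raise to the power $r$ so that the weighted Fefferman--Stein vector-valued maximal inequality applies on $L^{p/r}_w(\ell^{q/r})$ with $w\in A_{p/r}$, then control the resulting discrete sequence norm $\|(\sum_j g_j^q)^{1/q}\|_{L^p_w}$ by $\|f\|_{\dot F_{p,w}^{t,q}}$ via the Frazier--Jawerth/Bui discrete ($\varphi$-transform) characterization. The paper gives no more detail than this sketch, so there is nothing further to compare.
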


Lemma \ref{l:Mj2} is implicit in the work of Bui in \cite{Bui}.  Indeed Lemma \ref{l:Mj2} can be proved with a standard argument using the weighted version of the Fefferman-Stein vector-valued maximal inequality proved in \cite{AJ} and the discrete Littlewood-Paley characterization of $\dot F_{p,w}^{t,q}$ proved in \cite{Bui}.  See also \cite{LZ}, where they prove this result in the setting of weighted Hardy spaces (i.e. for $t=0$ and $q=2$).

\section{Smoothly Truncated Kernel Estimates}\label{Sect3}

In this section, we start to work with operators $T\in SIO_\nu(M+\gamma)$.  A priori, we are very limited in what we can do with such operators.  We will impose a little more structure on $T$ through the $WBP_\nu$ and $T^*(x^\alpha)=0$ conditions to help gain some traction, which we will see later are necessary conditions in many situations.  The main purpose of these results is to find representation formulas for $T$ that make it easier to work with.  Our first result provides an integral representation and estimates for $\psi_k*Tf$, which will be useful for working with boundedness properties of $T$ on Besov and Triebel-Lizorkin spaces.  These estimates are similar to ones proved in \cite{HartLu1}.  Though it should be noted that the corresponding result in \cite{HartLu1} is only for order-zero operators and it assumes that the operator is $L^2$-bounded, whereas we address $\nu$ order operators and only assume Weak Boundedness Properties here.

\begin{theorem}\label{t:truncation}
Let $\nu\in\R$, $M,L\geq0$ be integers satisfying $\nu\leq L\leq M+\nu$, $0<\gamma\leq1$, and $T\in SIO_\nu(M+\gamma)$.  Assume that $T$ satisfies the $WBP_\nu$ and that $T^*(x^\alpha)=0$ for $|\alpha|\leq L$.  Then for any $\psi\in\mathcal D_P$, with $P$ sufficiently large, there is a kernel function $\theta_k(x,y)$ such that
\begin{align*}
\psi_k*Tf(x)=\int_{\R^n}\theta_k(x,y)f(y)dy
\end{align*}
for $f\in\S_P$, where $\theta_k(x,y)$ satisfies
\begin{align}
&|D_y^\alpha \theta_k(x,y)|\less2^{(\nu+|\alpha|)}\Phi_k^{n+\nu+M+\gamma}(x-y)&			&\text{for }|\alpha|\leq \lfloor L-\nu\rfloor\label{sqker1}\\
&|D_y^\alpha \theta_k(x,y+h)-D_y^\alpha \theta_k(x,y)|\less2^{(\nu+|\alpha|)k}(2^k|h|)^{\gamma\,'}\Phi_k^{n+\nu+M+\gamma}(x-y)&		&\text{for }|\alpha|= \lfloor L-\nu\rfloor\label{sqker2}
\end{align}
for all $x,y,h\in\R^n$ with $|h|<(2^{-k}+|x-y|)/2$ and any $0<\gamma\,'<\gamma$.

\end{theorem}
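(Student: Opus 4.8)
The plan is to take $\theta_k(x,y):=T^*(\psi_k^x)(y)$, where $\psi_k^x(z):=\psi_k(z-x)$. Since $\psi\in\mathcal D_P$ we have $\psi_k^x\in\mathcal D_P$, so the transpose relation gives $\psi_k*Tf(x)=\langle Tf,\psi_k^x\rangle=\langle f,T^*\psi_k^x\rangle$ for $f\in\S_P$; hence the theorem reduces to showing that $T^*\psi_k^x$ is represented by a locally integrable function $\theta_k(x,\cdot)$ obeying \eqref{sqker1}--\eqref{sqker2}, after which the last pairing is simply $\int_{\R^n}\theta_k(x,y)f(y)\,dy$. I would split the analysis at $|x-y|\sim 2^{-k}$, and I expect the moment hypotheses $T^*(x^\alpha)=0$ to be used only near the diagonal.

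Away from the diagonal, say $|x-y|\ge 2^{1-k}$: since $T^*\in SIO_\nu$ by symmetry, the remark following the definition of $SIO_\nu$ lets me write $\theta_k(x,y)=\int_{\R^n}K(u,y)\psi_k^x(u)\,du$ as an absolutely convergent integral, $K$ being the kernel of $T$. For $|\alpha|\le\lfloor L-\nu\rfloor\le M$ I differentiate under the integral, subtract the Taylor polynomial of $D_y^\alpha K(\cdot,y)$ at $x$ of order $M-|\alpha|-1$ (none if $|\alpha|=M$), which is legitimate because $\psi_k^x$ has $P$ vanishing moments with $P$ large, and use the order-$(M+\gamma)$ regularity of $K$ in its first variable, taking the top-order $\gamma$-H\"older estimate to gain the extra $\gamma$. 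As $|u-x|\lesssim 2^{-k}\le|x-y|/2$ on $\supp\psi_k^x$, this yields $|D_y^\alpha\theta_k(x,y)|\lesssim 2^{-k(M-|\alpha|+\gamma)}|x-y|^{-(n+\nu+M+\gamma)}\sim 2^{(\nu+|\alpha|)k}\Phi_k^{n+\nu+M+\gamma}(x-y)$, which is \eqref{sqker1} in this range; \eqref{sqker2} follows the same way, additionally invoking the $\gamma$-H\"older regularity of $D_y^\alpha K$ in its second variable and treating the cases $|h|\le|x-y|/4$ and $|h|>|x-y|/4$ separately. The conditions $T^*(x^\alpha)=0$ are not needed here.

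Near the diagonal, $|x-y|<2^{1-k}$, where $\Phi_k^{n+\nu+M+\gamma}(x-y)\sim 2^{kn}$ and the target is $|D_y^\alpha\theta_k(x,y)|\lesssim 2^{(n+\nu+|\alpha|)k}$, is where $WBP_\nu$ and the moment conditions enter and where the absence of any a priori boundedness of $T$ must be circumvented. Instead of the kernel, I use a Calder\'on reproducing formula $f=\sum_j\phi_j*\widetilde\phi_j*f$ with $\phi\in\mathcal D_P$ (Lemma \ref{l:calderon}), which gives
$$\psi_k*Tf(x)=\sum_{j\in\Z}\int_{\R^n}\langle T^*\psi_k^x,\phi_j^z\rangle\,(\widetilde\phi_j*f)(z)\,dz,\qquad\phi_j^z(\cdot):=\phi_j(\cdot-z),$$
so that $\theta_k(x,y)=\sum_{j}\int_{\R^n}\langle T^*\psi_k^x,\phi_j^z\rangle\,\widetilde\phi_j(z-y)\,dz$ and everything is reduced to estimating $\langle T^*\psi_k^x,\phi_j^z\rangle=\langle\psi_k^x,T\phi_j^z\rangle$ with decay in $j$ and $|z-y|$. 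When the supports of $\psi_k^x$ and $\phi_j^z$ are separated, the kernel bounds and the vanishing moments of both bumps suffice; when they overlap at comparable scale, $WBP_\nu$ suffices after rescaling each bump to be admissible on a ball $B$ of the smaller radius and balancing the normalizations against $|B|^{1-\nu/n}$. The decisive cases are the scale-separated ones: for $j\ge k$ (the fine bump $\phi_j^z$ inside the coarser $\psi_k^x$) I expand $\psi_k^x$ at the center $z$ of $\phi_j^z$ and exploit $\langle J_z^L[\psi_k^x],T\phi_j^z\rangle=0$ — which is exactly the content of $T^*(x^\alpha)=0$ for $|\alpha|\le L$, through the truncated definition of $T$ on polynomials and the identity $\langle x^\alpha,Tg\rangle=\langle T^*(x^\alpha),g\rangle$ — leaving the Taylor remainder, of size $O(2^{-(j-k)(L+1)})$ on $\supp\phi_j^z$; for $j\le k$ (a coarse $\phi_j^z$ engulfing $\psi_k^x$) I expand $\phi_j^z$ at $x$ to a high order and dispose of the resulting pieces with the kernel bounds, $WBP_\nu$, and the far-field estimate on $\theta_k$ obtained above. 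Re-summing identifies $T^*\psi_k^x$ with the function $\theta_k(x,\cdot)$ and gives \eqref{sqker1} near the diagonal (an auxiliary smooth truncation of $T$ keeps every expression an honest function while one passes to the limit). For $D_y^\alpha\theta_k$ one differentiates $\widetilde\phi_j(z-y)$, which multiplies the $j$-th term by $2^{j|\alpha|}$: the $j\le k$ part always sums, but the $j\ge k$ part carries only the $O(L+1)$ orders of decay supplied by $\langle J_z^L[\psi_k^x],T\phi_j^z\rangle=0$ together with a factor $2^{j\nu}$ from the order of $T$, so the series in $j$ converges precisely when $|\alpha|<L+1-\nu$, i.e.\ $|\alpha|\le\lfloor L-\nu\rfloor$; the borderline case is absorbed by the kernel's $\gamma$-H\"older regularity, which is why \eqref{sqker2} is stated with $\gamma'<\gamma$. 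The H\"older bound \eqref{sqker2} near the diagonal comes from running the same estimates on the difference of test bumps centered at $y$ and $y+h$.

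The crux, where I expect the real work, is controlling the pairings $\langle T^*\psi_k^x,\phi_j^z\rangle$ in the scale-separated cases with enough decay: the naive $WBP_\nu$ bound grows geometrically with the scale gap and must be overcome by cancellation, and with no $L^2$-type boundedness available the only sources of cancellation are the many vanishing moments of $\psi_k^x$, the regularity encoded in $SIO_\nu(M+\gamma)$, and the hypotheses $T^*(x^\alpha)=0$. Combining these quantitatively — and choosing $P$, hence the number of vanishing moments of $\psi$, large relative to $M$, $L$, $|\nu|$, and the differentiation order $N$ furnished by $WBP_\nu$, so that every geometric series in $j$ and every Taylor remainder closes — is the bookkeeping that makes up the bulk of the proof.
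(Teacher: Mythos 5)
Your proposal is correct and follows essentially the same route as the paper: define $\theta_k(x,y)=T^*\psi_k^x(y)$ (suitably regularized), handle $|x-y|\gtrsim 2^{-k}$ by the kernel representation plus Taylor subtraction against the vanishing moments of $\psi$, and handle the near-diagonal regime by a scale decomposition in $y$, subtracting the jet $J^L[\psi_k^x]$ (licensed by $T^*(x^\alpha)=0$), applying $WBP_\nu$ to the overlapping piece and the kernel bounds to the separated piece, with the geometric series converging exactly for $|\alpha|\le\lfloor L-\nu\rfloor$ and the loss $\gamma'<\gamma$ absorbing the borderline. The only differences are organizational: the paper telescopes $P_N=P_k+\sum_{\ell\ge k}\widetilde Q_\ell$ so that all coarse scales are absorbed into a single $WBP_\nu$ application (avoiding your separate $j\le k$ case), and it deliberately extracts only $L+\gamma''$ (not $L+1$) from the Taylor remainder so that the far-field piece still converges when $L=M+\nu$.
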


\begin{proof}
Fix $P\in\N_0$ large enough so that $P\geq M$, $T$ satisfies $WBP_\nu$ with parameters $M=N=P$, the kernel representation for $T$ is valid for $f,g\in\mathcal D_P$ with disjoint support, and $T^*(x^\alpha)=0$ in $\mathcal D_{P}'$ for all $|\alpha|\leq L$.  Let $\psi\in\mathcal D_P$, and assume without loss of generality that $\supp(\psi)\subset B(0,1)$.  Since $T$ is continuous from $\S_P$ into $\S'$, we have $T^*\psi_k^x\in\S'$ for all $x\in\R^n$ and $k\in\Z$, where $\psi_k^x(y)=\psi_k(x-y)$.  This is the distribution kernel of $\psi_k*Tf$ in the sense that
\begin{align*}
\psi_k*Tf(x)&=\<Tf,\psi_k^x\>=\<T^*\psi_k^x,f\>
\end{align*}
for $f\in \S_P$.  So we would like to define $\theta_k(x,y)=T^*\psi_k^x(y)$, but this expression may not be well-defined pointwise since $T^*\psi_k^x$ is a priori only a distribution (not necessarily a function in $y$).  However, we will show that this is merely a technicality, and that $T^*\psi_k^x$, as a distribution, agrees with integration against a function $T^*\psi_k^x(y)$.

Let $\varphi\in C_0^\infty$ with integral $1$ such that $\widetilde\psi(x)=2^{n}\varphi(2x)-\varphi(x)$ and $\widetilde\psi\in\mathcal D_{P}$.  Define $P_kf=\varphi_k*f$ and $\widetilde Q_\ell f=\widetilde\psi_\ell*f$.  Note that $P_Nf\rightarrow f$ as $N\rightarrow\infty$ in $\S$ for $f\in\S$, and so $P_NT^*\psi_k^x\rightarrow T^*\psi_k^x $ as $N\rightarrow\infty$ in $\S'$.  Furthermore, $P_NT^*\psi_k^x(y)=\<T^*\psi_k^x,\varphi_N^y\>$ is a function in $y$ for all $x\in\R^n$ and $k,N\in\Z$ by the definition of distributional convolution (in fact, it is a $C^\infty$ function of tempered growth since $T^*\psi_k^x\in\S'$).  Define
\begin{align}\label{thetadefn}
\theta_k(x,y)=\lim_{N\rightarrow\infty}P_NT^*\psi_k^x(y).
\end{align}
It should be noted that from what we have shown so far, it is not clear yet that the limit in \eqref{thetadefn} exits for all $x,y\in\R^n$ and $k\in\Z$.  We will show that this is indeed the case and that the kernel function defined in \eqref{thetadefn} satisfies estimates \eqref{sqker1} and \eqref{sqker2}.  Assuming for the moment that \eqref{thetadefn} holds pointwise and that $\theta_k$ satisfies estimates \eqref{sqker1} and \eqref{sqker2} (which we will prove), by the $\S'$ convergence $P_NT^*\psi_k^x\rightarrow T^*\psi_k^x$ and by dominated convergence, we have
\begin{align*}
\psi_k*Tf(x)=\<T^*\psi_k^x,f\>=\lim_{N\rightarrow\infty}\int_{\R^n}P_NT^*\psi_k^x(y)f(y)dy=\int_{\R^n}\theta_k(x,y)f(y)dy
\end{align*}
for all $f\in\S_P$.  So to complete the proof, we must show that the limit in \eqref{thetadefn} exists for each $x,y,k$ and that $\theta_k$ satisfies \eqref{sqker1} and \eqref{sqker2}.

For $|x-y|>2^{3-k}$, it follows from the kernel representation of $T$ that $T^*\psi_k^x$ is a continuous function on a sufficiently small neighborhood of $y$ for any fixed $x\in\R^n$ and $k\in\Z$, and that $P_NT^*\psi_k^x(y)\rightarrow T^*\psi_k^x(y)$ pointwise as $N\rightarrow\infty$.  In particular, the limit in \eqref{thetadefn} exists when $|x-y|>2^{3-k}$.  Furthermore, still assuming that $|x-y|>2^{3-k}$, by the kernel representation for $T$ and the support properties of $\psi_k$, it follows that $T^*\psi_k^x$ is $M$-times differentiable on a neighborhood of $y$.  For $|\alpha|\leq \lfloor L-\nu\rfloor$, we have
\begin{align*}
|D_y^\alpha\theta_k(x,y)|=|D_y^\alpha T^*\psi_k^x(y)|&=\left|\int_{\R^n}\(D_1^\alpha K(u,y)-J_x^{M-|\alpha|}[D_1^\alpha K(\cdot,y)](u)\)\psi_k(u-x)du\right|\\
&\less\int_{\R^n}\frac{|x-u|^{M-|\alpha|+\gamma}}{|x-y|^{n+\nu+M+\gamma}}|\psi_k(u-x)|du
\less2^{(\nu+|\alpha|)k}\Phi_k^{n+\nu+M+\gamma}(x-y).
\end{align*}
If $|h|\geq2^{-k}$, then it trivially follows that
\begin{align*}
|D_y^\alpha \theta_k(x,y+h)-D_y^\alpha \theta_k(x,y)|&\less2^{(\nu+|\alpha|)k}\[\Phi_k^{n+\nu+M+\gamma}(x-y)+\Phi_k^{n+\nu+M+\gamma}(x-y-h)\]\\
&\less2^{(\nu+|\alpha|)k}(2^k|h|)^\gamma\[\Phi_k^{n+\nu+M+\gamma}(x-y)+\Phi_k^{n+\nu+M+\gamma}(x-y-h)\].
\end{align*}
Otherwise we assume that $|h|<2^{-k}$, and it follows that $|x-y-h|\geq|x-y|-|h|>|x-y|/2\geq2^{1-k}$.  In this situation, we consider two cases: if $|\alpha|=M$ and if $|\alpha|<M$.  When $|\alpha|=M$, we have
\begin{align*}
|D_y^\alpha \theta_k(x,y+h)-D_y^\alpha \theta_k(x,y)|&=\left|\int_{\R^n}\(D_y^\alpha K(u,y+h)- D_y^\alpha K(u,y)\)\psi_k(u-x)du\right|\\
&\hspace{0cm}\less\int_{\R^n}\frac{|h|^{\gamma}}{|u-y|^{n+\nu+|\alpha|+\gamma}}|\psi_k(u-x)|du\\
&\less2^{(\nu+|\alpha|) k}(2^{k}|h|)^{\gamma}\Phi_k^{n+\nu+M+\gamma}(x-y).
\end{align*}
Now assume that $|\alpha|<M$.  Then
\begin{align*}
|D_y^\alpha \theta_k(x,y+h)-D_y^\alpha \theta_k(x,y)|&=\left|\int_{\R^n}\(D_y^\alpha K(u,y+h)- D_y^\alpha K(u,y)\)\psi_k(u-x)du\right|\\
&\hspace{-5cm}=\left|\int_{\R^n}\( \(D_y^\alpha K(u,y+h)- D_y^\alpha K(u,y)\)-J_x^{M-1-|\alpha|}[D_y^\alpha K(\cdot,y+h)- D_y^\alpha K(\cdot,y)](u)\)\psi_k(u-x)du\right|\\
&\hspace{-5cm}\leq\int_{\R^n}\sum_{|\beta|=M-|\alpha|}|D_0^\beta D_1^\alpha K(\xi,y+h)-D_0^\beta D_1^\alpha K(\xi,y)|\,|u-x|^{M-|\alpha|}\,|\psi_k(u-x)|du\\
&\hspace{2cm}\text{ for some }\xi=cx+(1-c)u\text{ with }0<c<1\\
&\hspace{-5cm}\less\int_{\R^n}\frac{|h|^{\gamma}|x-u|^{M-|\alpha|}}{|\xi-y|^{n+\nu+M+\gamma}}|\psi_k(u-x)|du
\less2^{(\nu+|\alpha|) k}(2^{k}|h|)^{\gamma}\Phi_k^{n+\nu+M+\gamma}(x-y).
\end{align*}
So $\theta_k(x,y)$ is well-defined and satisfies \eqref{sqker1} and \eqref{sqker2} when $x$ and $y$ are far apart.

When $|x-y|\leq 2^{3-k}$, we decompose $P_NT^*\psi_k^x$ further,
\begin{align}\label{furtherdecomp}
P_NT^*\psi_k^x(y)=\sum_{\ell=k}^{N-1} \widetilde Q_\ell T^*\psi_k^x(y)+P_kT^*\psi_k^x(y).
\end{align}
Recall that $\varphi\in C_0^\infty$ was chosen so that $\widetilde\psi_\ell=\varphi_{\ell+1}-\varphi_\ell\in\mathcal D_P$.  For the remainder of the proof, we drop the tilde on top of $\widetilde\psi$ for the sake of simplifying notation.  This causes an overlap in notation between $\widetilde\psi_\ell$ and $\psi_k$, which is ultimately harmless, but the distinction can still be recovered at any point in the remainder of the proof by identifying the subscripts, $\ell$ versus $k$.

Let $\alpha\in\N_0^n$ with $|\alpha|\leq \lfloor L-\nu\rfloor$.  Using the hypothesis $T^*(x^\mu)=0$ for $|\mu|\leq L$ we write
\begin{align*}
&\left|D_y^\alpha\<T\psi_{\ell}^{y},\psi_k^x\>\right|\leq |A_{\ell,k}(x,y)|+|B_{\ell,k}(x,y)|,\text{ where}\\
&\hspace{.25cm}A_{\ell,k}(x,y)=2^{\ell|\alpha|}\int_{\R^n}T((D^\alpha\psi)_{\ell}^{y})(u)\(\psi_k^x(u)-J_y^L[\psi_k^x](u)\)\eta_{2^{2-\ell}}(u-y)du,\\
&\hspace{.25cm}B_{\ell,k}(x,y)=\lim_{R\rightarrow\infty}2^{\ell |\alpha|}\int_{\R^n}T((D^\alpha\psi)_{\ell}^{y})(u)\(\psi_k^x(u)-J_y^L[\psi_k^x](u)\)(\eta_R(u)-\eta_{2^{2-\ell}}(u-y))du,
\end{align*}
where $\eta_R\in\mathcal D_P$ with $\eta_R(x)=\eta(x/R)$, $\supp(\eta)\subset B(0,2)$, and $\eta=1$ on $B(0,1)$.  We apply the $WBP_\nu$ property to estimate $A_{\ell,k}$ as follows,
\begin{align*}
|A_{\ell,k}(x,y)|
&=2^{\ell|\alpha|}2^{(\ell+k)n}2^{(L+\gamma)(k-\ell)}\left|\<T\(\frac{(D^\alpha\psi)_{\ell}^{y}}{2^{\ell n}}\),\frac{\psi_k^x-J_y^L[\psi_k^x]}{2^{kn}2^{(L+\gamma)(k-\ell)}}\eta_{2^{2-\ell}}(\cdot-y)\>\right|\\
&\hspace{0cm}\less2^{\ell |\alpha|}2^{\ell \nu}2^{(L+\gamma)(k-\ell)}2^{kn}=2^{k(\nu+|\alpha|)}2^{(L+\gamma-\nu-|\alpha|)(k-\ell)}2^{kn}.
\end{align*}
Note that $\supp\(2^{-\ell n}(D^\alpha\psi)_{\ell}^{y}\)\subset B(y,2^{3-\ell})$ and $||D^\mu\(2^{-\ell n}(D^\alpha\psi)_{\ell}^{y}\)||_{L^\infty}\less2^{|\mu|\ell}$ for all $\mu\in\N_0^n$, where the associated constants are independent of $x,y,\ell,k$.  Similarly, we have $\supp(\eta_{2^{2-\ell}}(\cdot-y))\subset B(y,2^{3-\ell})$ and
\begin{align*}
\left|D^\mu\(\frac{\psi_k^x(u)-J_y^L[\psi_k^x](u)}{2^{kn}2^{(L+\gamma)(k-\ell)}}\eta_{2^{2-\ell}}(u-y)\)\right|\less 2^{\ell|\mu|}
\end{align*}
for all $\mu\in\N_0^n$ as long as $k\leq \ell$, where the associated constant does not depend on $u,x,y,k,\ell$.  Hence it is an appropriate function to apply $WBP_\nu$ here.

Let $\gamma\,',\gamma\,''>0$ such that $\gamma\,'<\gamma\,''<\gamma$.  The $B_{\ell,k}$ term is bounded using the kernel representation of $T$ in the following way
\begin{align*}
|B_{\ell,k}(x,y)|&\leq2^{\ell|\alpha|}\limsup_{R\rightarrow\infty}\int_{|u-y|>2^{1-\ell}}\int_{\R^n}\left|K(u,v)-J_y^M[K(u,\cdot)](v)\right||(D^\alpha\psi)_\ell^y(v) |dv\\
&\hspace{4.5cm}\times\left|\psi_k^x(u)-J_y^L[\psi_k^x](u)\right||\eta_R(u)|du\\
&\hspace{-1.2cm}\less2^{\ell |\alpha|}\sum_{m=1}^\infty\int_{2^{m-\ell}<|u-y|\leq 2^{m+1-\ell}}\(\int_{\R^{n}}\frac{2^{-(M+\gamma)\ell}}{2^{(n+\nu+M+\gamma)(m-\ell)}}|(D^\alpha\psi)_{\ell}^{y}(v)|dv\) 2^{kn}(2^k2^{m-\ell})^{L+\gamma\,''}du\\
&\hspace{-1.2cm}\less2^{k(n+\nu+|\alpha|)}2^{(L+\gamma\,''-\nu-|\alpha|)(k-\ell)}\sum_{m=1}^\infty 2^{-(\nu+M+\gamma-L-\gamma\,'')m}\\
&\hspace{-1.2cm}\less2^{k(n+\nu+|\alpha|)}2^{(L+\gamma\,''-\nu-|\alpha|)(k-\ell)}.
\end{align*}
Here we used that $\nu+M+\gamma>L+\gamma\,''$ since $L\leq M+\nu$ and $\gamma\,''<\gamma$.  It is not crucial here that we took $\gamma\,'<\gamma\,''<\gamma$, but this estimate will be used again later where our choice of $\gamma\,'<\gamma\,''$ will be important.  It follows that
\begin{align*}
\sum_{\ell\geq k}2^{\ell|\alpha|}\left|\<T((D^\alpha\psi)_{\ell}^{y}),\psi_k^x\>\right|\less2^{k(n+\nu+|\alpha|)}\less2^{k(\nu+|\alpha|)}\Phi_k^{n+\nu+M+\gamma}(x-y).
\end{align*}
Here we use that $\alpha\in\N_0^n$ must satisfy $|\alpha|\leq L-\nu$, which implies that $\nu+|\alpha|<L+\gamma\,''$.  This verifies that the limit of \eqref{furtherdecomp} as $N\rightarrow\infty$ exists for $|x-y|\leq2^{3-k}$ as well, and that the first term on the right hand side of \eqref{furtherdecomp} satisfies \eqref{sqker1}.  Hence $\theta_k$ is well-defined by \eqref{thetadefn}.  Since $T$ satisfies the $WBP_\nu$, it also follows that
\begin{align*}
|D_y^\alpha P_kT^*\psi_k^x(y)|
&=2^{|\alpha|k}2^{2kn}\left|\<T^*\(\frac{\psi_k^x(u)}{2^{kn}}\),\frac{(D^\alpha\varphi)_k(\cdot-y)}{2^{kn}}\>\right|
\less2^{(n+\nu+|\alpha|)k}.
\end{align*}
Then second term on the right hand side of \eqref{furtherdecomp} also satisfies \eqref{sqker1}.  Therefore $\theta_k$ satisfies \eqref{sqker1}.

We also verify the $\gamma\,'$-H\"older regularity estimate \eqref{sqker2}:  let $\alpha\in\N_0^n$ with $|\alpha|=L$.  It trivially follows from the estimates already proved that
\begin{align*}
&\sum_{\ell\geq k:\;2^{-\ell}<|h|}\left|\<D_y^\alpha T(\psi_{\ell}^{y+h}-\psi_{\ell}^{y}),\psi_k^x\>\right|\\
&\less2^{k(\nu+|\alpha|)}(2^k|h|)^{\gamma\,'}\(\Phi_k^{n+\nu+M+\gamma}(x-y)+\Phi_k^{n+\nu+M+\gamma}(x-y-h)\)\sum_{\ell\geq k:\;2^{-\ell}<|h|}2^{(L+\gamma\,''-\gamma\,'-\nu-|\alpha|)(k-\ell)}\\
&\less2^{k(\nu+|\alpha|)}(2^k|h|)^{\gamma\,'}\(\Phi_k^{n+\nu+M+\gamma}(x-y)+\Phi_k^{n+\nu+M+\gamma}(x-y-h)\).
\end{align*}
Note that $\nu+|\alpha|<L+\gamma\,''-\gamma\,'$ since we chose $\gamma\,'<\gamma\,''$.  On the other hand, for the situation where $|h|\leq2^{-\ell}$, we consider
\begin{align*}
&\sum_{\ell\geq k:\;2^{-\ell}\geq|h|}\left|\<D_1^\alpha T(\psi_{\ell}^{y+h}-\psi_{\ell}^{y}),\psi_k^x\>\right|
\leq |A_{\ell,k}(x,y,h)|+|B_{\ell,k}(x,y,h)|,
\end{align*}
where
\begin{align*}
&A_{\ell,k}(x,y,h)=2^{\ell|\alpha|}\int_{\R^n}T((D^\alpha\psi)_{\ell}^{y+h}-(D^\alpha\psi)_{\ell}^{y})(u)\(\psi_k^x(u)-J_y^L[\psi_k^x](u)\)\eta_{2^{2-\ell}}(u-y)du\;\;\text{ and }\\
&B_{\ell,k}(x,y,h)=\lim_{R\rightarrow\infty}2^{\ell|\alpha|}\int_{\R^n}T((D^\alpha\psi)_{\ell}^{y+h}-(D^\alpha\psi)_{\ell}^{y})(u)\(\psi_k^x(u)-J_y^L[\psi_k^x](u)\)\\
&\hspace{10cm}\times(\eta_R(u)-\eta_{2^{2-\ell}}(u-y))du.
\end{align*}
Note that when $|h|\leq2^{-\ell}$, the function $ 2^{-\ell n}(2^\ell|h|)^{-\gamma\,'}\[ (D^\alpha\psi)_{\ell}^{y+h}-(D^\alpha\psi)_{\ell}^{y}\]\in\mathcal D_P$ is supported in
\begin{align*}B(y,2^{-\ell})\cup B(y+h,2^{-\ell})\subset B(y,|h|+2^{-\ell})\subset B(y,2^{1-\ell})
\end{align*}
with
\begin{align*}
\left|\left|2^{-\ell n}(2^\ell|h|)^{-\gamma\,'} D^\mu\[(D^\alpha\psi)_{\ell}^{y+h}-(D^\alpha\psi)_{\ell}^{y}\]\right|\right|_{L^\infty}\less 2^{\ell |\mu|}
\end{align*}
for $|\mu|\leq N$, where the constant does not depend on $y$, $h$, or $\ell$; here $N$ is the integer specified in the $WBP_\nu$ condition for $T$.  Then using the $WBP_\nu$ for $T$, we have have
\begin{align*}
|A_{\ell,k}(x,y,h)|
&\leq2^{\ell|\alpha|}(2^\ell|h|)^{\gamma\,'}2^{kn}2^{(k-\ell)(L+\gamma)}2^{\ell n}\\
&\hspace{2cm}\times\left|\<T\(\frac{(D^\alpha\psi)_{\ell}^{y+h}-(D^\alpha\psi)_{\ell}^{y}}{2^{\ell n}(2^\ell|h|)^{\gamma\,'}}\),\frac{\psi_k^x-J_y^L[\psi_k^x]}{2^{kn}2^{(k-\ell)(L+\gamma)}}\eta_{2^{2-\ell}}(\cdot-y)\>\right|\\
&\hspace{0cm}\less2^{\ell(\nu+|\alpha|)}(2^\ell|h|)^{\gamma\,'}2^{kn}2^{(L+\gamma)(k-\ell)}=2^{k(n+\nu+|\alpha|)}2^{(\gamma-\gamma\,'-\nu)(k-\ell)}(2^k|h|)^{\gamma\,'}.
\end{align*}
Recall the selection of $\gamma\,''$ such that $0<\gamma\,'<\gamma\,''<\gamma$.  The $B_{\ell,k}$ term is bounded using the kernel representation of $T$
\begin{align*}
|B_{\ell,k}(x,y,h)|&\leq2^{\ell|\alpha|}\int_{|u-y|>2^{1-\ell}}\int_{\R^{n}}|K(u,v)-J_y^M[K(u,\cdot)](v)|\\
&\hspace{1cm}\times|(D^\alpha\psi)_{\ell}^{y+h}(v)-(D^\alpha\psi)_\ell^{y}(v)|\,|\psi_k^x(u)-J_y^L[\psi_k^x](u)|du\,dv\\
&\hspace{0cm}\less2^{\ell|\alpha|}\sum_{m=1}^\infty\int_{2^{m-\ell}<|u-y|\leq2^{m+1-\ell}}\int_{\R^{n}}\frac{2^{-(M+\gamma)\ell}}{2^{(n+\nu+M+\gamma)(m-\ell)}}(2^{\ell}|h|)^{\gamma\,'}\\
&\hspace{1cm}\times\(\Phi_{\ell}^{n+1}(y-v)+\Phi_{\ell}^{n+1}(y+h-v)\)dv\,2^{kn}(2^k|u-y|)^{L+\gamma\,''}du\\
&\hspace{0cm}\less2^{k(n+\nu+|\alpha|)}(2^{k}|h|)^{\gamma\,'}2^{(L-\nu-|\alpha|+\gamma\,''-\gamma\,')(k-\ell)}\sum_{m=1}^\infty 2^{-(\nu+M+\gamma-L-\gamma\,'')m}    \\
&\hspace{0cm}\less2^{k(n+\nu+|\alpha|)}(2^{k}|h|)^{\gamma\,'}2^{(L-\nu-|\alpha|+\gamma\,''-\gamma\,')(k-\ell)}
\end{align*}
Again we use that $\nu+M+\gamma>L+\gamma\,''$.  Then it follows that
\begin{align*}
\sum_{\ell=k}^\infty|A_{\ell,k}(x,y,y')|+|B_{\ell,k}(x,y,y')|&\less2^{k(n+\nu+|\alpha|)} (2^{k}|h|)^{\gamma\,'}
\end{align*}
since $|\alpha|+\nu+\gamma\,'<L+\gamma\,''$.  This completes the estimate in \eqref{sqker2} for the first term on the right hand side of \eqref{furtherdecomp}.  Now we check that $P_kT^*\psi_k^x(y)$, the second term from the right hand side of \eqref{furtherdecomp}, also satisfies the $\gamma\,'$-H\"older estimate.  The estimate is trivial when $|h|\geq2^{-k}$.  Since $(2^k|h|)^{-\gamma\,'}(\psi_k^{x+h}-\psi_k^x)\in\mathcal D_P$ with the appropriate derivative estimates.  When $|h|\leq2^{-k}$, it follows from the WBP$_\nu$ for $T$ that
\begin{align*}
|D_y^\alpha P_kT^*(\psi_k^{x+h}-\psi_k^x)(y)|
&=(2^k|h|)^{\gamma\,'}2^{|\alpha|k}2^{2kn}\left|\<T^*\(\frac{(\psi_k^{x+h}-\psi_k^x)}{2^{kn}(2^k|h|)^{\gamma\,'}}  \),\frac{(D^\alpha\varphi_k^y)}{2^{kn}}\>\right|\\
&\less(2^k|h|)^{\gamma\,'}2^{(n+\nu+|\alpha|)k}.
\end{align*}
Hence both terms on the right hand side of \eqref{furtherdecomp} satisfy the appropriate estimates, and hence so does $\theta_k(x,y)$.  This completes the proof.
\end{proof}

\begin{corollary}\label{c:ao}
Let $\nu\in\R$, $M,L\geq0$ be integers satisfying $\nu\leq L\leq M+\nu$, $0<\gamma\leq1$, and $T\in SIO_\nu(M+\gamma)$.  Assume that $T^*(x^\alpha)=0$ for $|\alpha|\leq L$ and that $T\in WBP_\nu$.  Fix $\psi,\tilde\psi\in\mathcal D_P$ for $P$ sufficiently large, and define $\lambda_{j,k}(x,y)=\<T^*\psi_j^x,\tilde\psi_k^y\>$ for $j,k\in\Z$ and $x,y\in\R^n$.  Then
\begin{align*}
&|D_x^\alpha D_y^\beta\lambda_{j,k}(x,y)|\less2^{(\nu+|\alpha|)j+|\beta|k}2^{(\widetilde L+\gamma')\min(0,j-k)}\Phi_{\min(j,k)}^{n+\nu+M+\gamma}(x-y)\\
&|D_x^\alpha D_y^\beta\lambda_{j,k}(x+h,y)-D_x^\alpha D_y^\beta\lambda_{j,k}(x,y)|\less 2^{j(\nu+|\alpha|)+k|\beta|}(2^j|h|)^{\gamma\,'-\delta}  2^{\min(0,j-k)(\widetilde L+\delta)}\Phi_{\min(j,k)}^{n+\nu+M+\gamma}(x-y)
\end{align*}
for all $\alpha,\beta\in\N_0^n$, $0\leq\delta\leq\gamma\,'<\gamma$ and $x,y,h\in\R^n$ satisfying $|h|<(2^{-\min(j,k)}+|x-y|)/2$, where $\widetilde L=\lfloor L-\nu\rfloor$.
\end{corollary}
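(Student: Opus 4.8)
The plan is to deduce both estimates from Theorem \ref{t:truncation} after realizing $\lambda_{j,k}$ as a genuine integral. Since $\psi,\tilde\psi\in\mathcal D_P$ with $P$ large, $T\in WBP_\nu$, and $T^*(x^\alpha)=0$ for $|\alpha|\leq L$, Theorem \ref{t:truncation} furnishes a kernel $\theta_j(x,z)=T^*\psi_j^x(z)$ with
\begin{align*}
\lambda_{j,k}(x,y)=\<T^*\psi_j^x,\tilde\psi_k^y\>=\int_{\R^n}\theta_j(x,z)\,\tilde\psi_k(y-z)\,dz,
\end{align*}
where $|D_z^\beta\theta_j(x,z)|\less 2^{(\nu+|\beta|)j}\Phi_j^{n+\nu+M+\gamma}(x-z)$ for $|\beta|\leq\widetilde L$ and the order-$\widetilde L$ $z$-derivative of $\theta_j$ is H\"older-$\gamma\,'$ in $z$ for any $\gamma\,'<\gamma$. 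The first reduction I would make is to move the $x$- and $y$-derivatives onto the bumps: writing $\theta_{j,\alpha}(x,z)=T^*(D^\alpha\psi)_j^x(z)$, which is the Theorem \ref{t:truncation} kernel built from $D^\alpha\psi$ (note $D^\alpha\psi,D^\beta\tilde\psi\in\mathcal D_P$, since differentiation does not lower the order of vanishing moments), one obtains
\begin{align*}
D_x^\alpha D_y^\beta\lambda_{j,k}(x,y)=2^{j|\alpha|+k|\beta|}\int_{\R^n}\theta_{j,\alpha}(x,z)\,(D^\beta\tilde\psi)_k(y-z)\,dz,
\end{align*}
where $\theta_{j,\alpha}$ obeys the same bounds as $\theta_j$ and $(D^\beta\tilde\psi)_k$ still has vanishing moments up to order $\widetilde L$. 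So it suffices to treat $\alpha=\beta=0$ and keep the benign prefactors $2^{j|\alpha|}$, $2^{k|\beta|}$.

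For the size estimate I would split on $j$ versus $k$. When $j\geq k$, use only the size bound on $\theta_j$ and the super-polynomial decay of the compactly supported $\tilde\psi_k$, together with $\Phi_j^{N}*\Phi_k^{N}\less\Phi_{\min(j,k)}^{N}$ for $N=n+\nu+M+\gamma>n$ (valid because $M+\nu\geq L\geq0$); here $\min(0,j-k)=0$, so nothing needs to be gained. When $j<k$, exploit the vanishing moments of $\tilde\psi_k$: subtract the order-$\widetilde L$ Taylor polynomial of $\theta_j(x,\cdot)$ about $y$ and bound the remainder using the H\"older-$\gamma\,'$ regularity of its top $z$-derivative, producing a factor $(2^j|z-y|)^{\widetilde L+\gamma\,'}$ that becomes $2^{(j-k)(\widetilde L+\gamma\,')}=2^{\min(0,j-k)(\widetilde L+\gamma\,')}$ after integration against the scale-$2^{-k}$ bump; combined with $2^{\nu j}$ and $\Phi_{\min(j,k)}^{n+\nu+M+\gamma}(x-y)$ this gives the claimed inequality. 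This is the standard smooth-molecule almost-orthogonality estimate, the only subtlety being the usual two-regime (small versus large $|z-y|$) split.

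For the $x$-H\"older estimate I would express the increment as $T^*$ of a difference of bumps,
\begin{align*}
D_x^\alpha D_y^\beta\lambda_{j,k}(x+h,y)-D_x^\alpha D_y^\beta\lambda_{j,k}(x,y)=2^{j|\alpha|+k|\beta|}\int_{\R^n}T^*\big[(D^\alpha\psi)_j^{x+h}-(D^\alpha\psi)_j^x\big](z)\,(D^\beta\tilde\psi)_k(y-z)\,dz.
\end{align*}
When $2^j|h|\geq1$, estimate the two terms separately by the size bound, using that $|h|<(2^{-\min(j,k)}+|x-y|)/2$ makes $\Phi_{\min(j,k)}^{n+\nu+M+\gamma}$ at centers $x$ and $x+h$ comparable while $(2^j|h|)^{\gamma\,'-\delta}\geq1$ absorbs the rest. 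When $2^j|h|<1$, the key observation is that $2^{-jn}(2^j|h|)^{-(\gamma\,'-\delta)}\big[(D^\alpha\psi)_j^{x+h}-(D^\alpha\psi)_j^x\big]$ is, uniformly in $x,h,j$, a member of $\mathcal D_P$ supported in a ball of radius $\less2^{-j}$ with derivatives up to any fixed order $\less2^{j|\mu|}$; hence the proof of Theorem \ref{t:truncation}, which uses $\psi_j^x$ only through such bump-type properties, applies to it and shows that $\theta_{j,\alpha}(x+h,\cdot)-\theta_{j,\alpha}(x,\cdot)$ satisfies the bounds of $\theta_j$ with an extra factor $(2^j|h|)^{\gamma\,'-\delta}$ and with its order-$\widetilde L$ $z$-derivative H\"older-$\delta$ in $z$. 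Feeding this into the $j\geq k$ (crude convolution) resp.\ $j<k$ (Taylor against the vanishing moments of $(D^\beta\tilde\psi)_k$, now with a $\delta$-H\"older remainder contributing $2^{(j-k)(\widetilde L+\delta)}$) arguments from the previous paragraph produces precisely the stated bound.

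The hard part is the bookkeeping in the last step: one must verify that the normalized difference bump genuinely lies in $\mathcal D_P$ with uniformly controlled support and derivatives so that Theorem \ref{t:truncation}'s argument applies to it, and then track how extracting $(2^j|h|)^{\gamma\,'-\delta}$ of $x$-regularity leaves $\widetilde L+\delta$ orders of $z$-smoothness (available since $\delta\leq\gamma\,'<\gamma$) to spend against the vanishing moments of $(D^\beta\tilde\psi)_k$ when $j<k$, which is exactly what the range $0\leq\delta\leq\gamma\,'$ encodes. Note that one cannot instead obtain the $x$-regularity by applying Theorem \ref{t:truncation} to $T^*$, as that would require $T(x^\alpha)=0$, which is not assumed.
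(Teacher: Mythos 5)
Your proposal is correct and follows essentially the same route as the paper: apply Theorem \ref{t:truncation} with $(D^\alpha\psi)_j$ in place of $\psi_k$, get the size estimate by the crude convolution bound when $j\geq k$ and by Taylor subtraction against the vanishing moments of $(D^\beta\tilde\psi)_k$ when $j<k$, and get the $x$-H\"older estimate by running the same machinery on the normalized difference bump $(D^\alpha\psi)_j^{x+h}-(D^\alpha\psi)_j^x$ when $2^j|h|<1$. The only cosmetic difference is that you build the $(\gamma\,'-\delta,\widetilde L+\delta)$ exponent split directly into the normalization of the difference bump, whereas the paper first proves the full-strength H\"older bound and then interpolates it with the size estimate at the end; the two are equivalent.
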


\begin{proof}
This follows immediately by applying Theorem \ref{t:truncation} with $(D^\alpha\psi)_j*Tf$ in place of $\psi_k*Tf$.  When $k\geq j$, we have
\begin{align*}
|D_x^\alpha D_y^\beta\lambda_{j,k}(x,y)|&\leq2^{j|\alpha|+k|\beta|}\left|\int_{\R^n}\(\theta_j(x,u)-J_y^{\widetilde L}[\theta_j(x,\cdot)](u)\)(D^\beta\tilde\psi)_k^y(u) du\right|\\
&\hspace{-2cm}\less2^{j|\alpha|+k|\beta|}\int_{\R^n}2^{\nu j}(2^j|u-y|)^{\widetilde L+\gamma\,'}\(\Phi_j^{n+\nu+M+\gamma}(x-u)+\Phi_j^{n+\nu+M+\gamma}(x-y)\)|(D^\beta\tilde\psi)_k^y(u)| du\\
&\hspace{-2cm}\less2^{\nu j}2^{j|\alpha|+k|\beta|}2^{(\widetilde L+\gamma\,')(j-k)}\Phi_{\min(j,k)}^{n+\nu+M+\gamma}(x-y).
\end{align*}
When $k<j$, we have
\begin{align*}
|D_x^\alpha D_y^\beta\lambda_{j,k}(x,y)|&\leq2^{j|\alpha|+k|\beta|}\int_{\R^n}|T^*(D^\alpha\psi)_j^x(u)(D^\beta\tilde\psi)_k^y(u)| du\\
&\less2^{\nu j}2^{j|\alpha|+k|\beta|}\Phi_{\min(j,k)}^{n+\nu+M+\gamma}(x-y).
\end{align*}
If $|h|\geq2^{-j}$, then the second estimate trivially follows from the first.  When $|h|\leq2^{-j}$, we have
\begin{align*}
|D_x^\alpha D_y^\beta\lambda_{j,k}(x+h,y)-D_x^\alpha D_y^\beta\lambda_{j,k}(x,y)|
&\hspace{0cm}\less 2^{j|\alpha|+k|\beta|}  \int_{\R^n}2^{\nu j}  (2^j|h|)^{\gamma\,'}(2^j|u-y|)^{\widetilde L+\gamma}|(D^\beta\tilde\psi)_k^y(u)|\\
&\hspace{1.75cm}\times\(\Phi_j^{n+\nu+M+\gamma}(x-u)+\Phi_j^{n+\nu+M+\gamma}(x-y)\)du\\
&\hspace{0cm}\less 2^{j(\nu+|\alpha|)+k|\beta|}(2^j|h|)^{\gamma\,'}  2^{(\widetilde L+\gamma)\min(0,j-k)} \Phi_{\min(j,k)}^{n+\nu+M+\gamma}(x-y).
\end{align*}
Combining this with the first estimate yields the second one.
\end{proof}

\section{A Restricted Operator Calculus}\label{Sect4}

In this section, we prove two operator calculus type results, in Theorems \ref{t:1sidecalc} and \ref{t:calculus}.  The first provides conditions on $T\in SIO_\nu$ of the form $T^*(x^\alpha)=0$ so that $|\nabla|^{-s}T|\nabla|^t\in SIO_{\nu+t-s}$ (technically, this hold modulo polynomials), and the second provides conditions so that $|\nabla|^{-s}T|\nabla|^t\in CZO_{\nu+t-s}$.  It is of particular interest to note that in Theorem \ref{t:calculus}, we can actually conclude that $|\nabla|^{-s}T|\nabla|^t\in CZO_{\nu+t-s}$ while $T$ only belongs to $SIO_\nu$.  We provide some example later that show there are operators $T$ in $SIO_\nu$ but not $CZO_\nu$ such that $|\nabla|^{-s}T|\nabla|^t\in CZO_{\nu+t-s}$.  In fact, we construct two classes of such example, pseudodifferential operators with symbols in the forbidden class $S_{1,1}^0$ and a variant of the Bony paraproduct.

\begin{lemma}\label{l:kernelsum}
For any $M>-n$ such that $N>n+M$ and $x\neq 0$
\begin{align*}
\sum_{j\in\Z}2^{jM}\Phi_j^N(x)\less\frac{1}{|x|^{n+M}}.
\end{align*}
\end{lemma}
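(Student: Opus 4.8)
The plan is to split the sum over $j\in\Z$ at the scale $2^{-j}\approx|x|$, i.e.\ at $j_0$ chosen so that $2^{-j_0}\le|x|<2^{-j_0+1}$, and estimate the two pieces separately using the two competing behaviors of $\Phi_j^N(x)=2^{jn}(1+2^j|x|)^{-N}$. Recall that for $2^j|x|\le 1$ (the ``low frequencies'' $j\le j_0$) one has $\Phi_j^N(x)\le 2^{jn}$, while for $2^j|x|\ge 1$ (the ``high frequencies'' $j> j_0$) one has $\Phi_j^N(x)\lesssim 2^{jn}(2^j|x|)^{-N}=2^{j(n-N)}|x|^{-N}$.

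For the low-frequency piece I would bound
\begin{align*}
\sum_{j\le j_0}2^{jM}\Phi_j^N(x)\lesssim\sum_{j\le j_0}2^{j(n+M)}\lesssim 2^{j_0(n+M)}\lesssim\frac{1}{|x|^{n+M}},
\end{align*}
where the geometric sum converges precisely because $n+M>0$, and the last step uses $2^{-j_0}\le |x|$. For the high-frequency piece,
\begin{align*}
\sum_{j> j_0}2^{jM}\Phi_j^N(x)\lesssim |x|^{-N}\sum_{j> j_0}2^{j(n+M-N)}\lesssim |x|^{-N}2^{j_0(n+M-N)}\lesssim|x|^{-N}|x|^{N-n-M}=\frac{1}{|x|^{n+M}},
\end{align*}
where now the geometric series converges because $N>n+M$, and the final step uses $2^{-j_0}<2|x|$ (absorbing the harmless constant $2^{N-n-M}$ into the implied constant). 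Adding the two estimates gives the claimed bound.

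There is essentially no hard step here; the only thing to be careful about is that both geometric series genuinely converge, which is exactly what the two hypotheses $M>-n$ and $N>n+M$ guarantee — the first controls the tail as $j\to-\infty$, the second the tail as $j\to+\infty$ — and that the split point $j_0$ is chosen consistently so that the exponentials $2^{j_0(n+M)}$ and $2^{j_0(n+M-N)}$ convert cleanly into the correct power of $|x|$. One may also simply observe that the estimate is scale-invariant: replacing $x$ by $2^{-j_0}x$ reduces to the case $1/2\le|x|<1$, for which the bound is the trivial statement that $\sum_j 2^{jM}\Phi_j^N(x)$ converges to a finite constant, and this remark can be used to streamline the write-up if desired.
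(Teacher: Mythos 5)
Your proof is correct and follows exactly the same route as the paper: split the sum at the scale $2^{j}\approx|x|^{-1}$, bound the low-frequency part by $\sum 2^{j(n+M)}$ (convergent since $M>-n$) and the high-frequency part by $|x|^{-N}\sum 2^{-j(N-n-M)}$ (convergent since $N>n+M$). Nothing to add.
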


\begin{proof}
This estimate is straightforward to prove.  For $M$ and $N$ as above and $x\neq0$,
\begin{align*}
\sum_{j\in\Z}2^{jM}\Phi_j^N(x)&\leq\sum_{j\in\Z:2^j\leq|x|^{-1}}2^{j(n+M)}+\frac{1}{|x|^N}\sum_{j\in\Z:2^j>|x|^{-1}}2^{-j(N-n-M)}\less\frac{1}{|x|^{n+M}}.
\end{align*}
\end{proof}

\begin{theorem}\label{t:1sidecalc}
Let $\nu\in\R$, $M,L\geq0$ be integers satisfying $\nu\leq L\leq M+\nu$, $0<\gamma\leq1$, and $T\in SIO_\nu(M+\gamma)$.  Assume that $T^*(x^\alpha)=0$ for $|\alpha|\leq L$ and that $T\in WBP_\nu$.  Also fix $s,t\in\R$ satisfying $t<\lfloor L-\nu\rfloor+\gamma$, $s>\nu$, and $t-s<n+M+\gamma$.  Then there exists $T_{s,t}\in SIO_{\nu+t-s}(\gamma\,')$ for $0<\gamma\,'<\gamma$ such that $\<|\nabla|^{-s}T|\nabla|^tf,g\>=\<T_{s,t}f,g\>$ for all $f,g\in\S_\infty$.  The kernel $ K_{s,t}(x,y)$ of $ T_{s,t}$ satisfies
\begin{align*}
|D^\alpha D^\beta K_{s,t}(x,y)|&\less\frac{1}{|x-y|^{n+\nu+t-s+|\alpha|+|\beta|}}
\end{align*}
for all $x\neq y$ and $\alpha,\beta\in\N_0^n$ satisfying $|\alpha|<s-\nu$, $|\beta|<\lfloor L-\nu\rfloor+\gamma-t$, and $|\alpha|+|\beta|<n+M+\gamma+s-t$, and
\begin{align*}
|D^\alpha D^\beta K_{s,t}(x+h,y)-D^\alpha D^\beta K_{s,t}(x,y)|&\less\frac{|h|^{\gamma\,'}}{|x-y|^{n+\nu+t-s+|\alpha|+|\beta|+\gamma\,'}}\\
|D^\alpha D^\beta K_{s,t}(x,y+h)-D^\alpha D^\beta K_{s,t}(x,y)|&\less\frac{|h|^{\gamma\,'}}{|x-y|^{n+\nu+t-s+|\alpha|+|\beta|+\gamma\,'}}
\end{align*}
for all $x,y,h\in\R^n$ with $|h|<|x-y|/2$, $0<\gamma\,'<\gamma$, and $\alpha,\beta\in\N_0^n$ satisfying $|\alpha|<s-(\nu+\gamma\,')$, $|\beta|\leq\lfloor L-\nu\rfloor-t+\gamma-\gamma\,'$, and $|\alpha|+|\beta|<n+M+\gamma+s-(t+\gamma\,')$.  Furthermore, $T_{s,t}$ and $T_{s,t}^*$ are continuous from $\S_P$ into $\S'$ for $P$ sufficiently larger, and can be defined
\begin{align*}
\<T_{s,t}f,g\>=\sum_{j,k\in\Z}2^{tk-sj}\int_{\R^{2n}}\<T^*\psi_j^x,\psi_k^y\>(|\nabla|^t\widetilde\psi)_k*f(y)(|\nabla|^{-s}\widetilde \psi)_j*g(x)dx\,dy,
\end{align*}
where $\psi$ and $\widetilde\psi$ as chosen as in Lemma \ref{l:calderon}.
\end{theorem}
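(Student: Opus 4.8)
The plan is to build $T_{s,t}$ directly from the Calder\'on reproducing formula of Lemma \ref{l:calderon} and then verify that the resulting bilinear form produces a kernel with the claimed bounds. First I would apply Lemma \ref{l:calderon} to both $f$ and $g$ in $\S_\infty$, writing $f = \sum_k \psi_k * \widetilde\psi_k * f$ and $g = \sum_j \psi_j * \widetilde\psi_j * g$, and then pass $|\nabla|^t$ and $|\nabla|^{-s}$ onto the $\widetilde\psi$ factors, using that $|\nabla|^t$ is a Fourier multiplier commuting with convolution. This gives the candidate expression
\begin{align*}
\langle |\nabla|^{-s} T |\nabla|^t f, g\rangle = \sum_{j,k\in\Z} 2^{tk-sj} \int_{\R^{2n}} \langle T^*\psi_j^x, \psi_k^y\rangle\, (|\nabla|^t\widetilde\psi)_k * f(y)\, (|\nabla|^{-s}\widetilde\psi)_j * g(x)\, dx\, dy.
\end{align*}
Here the key structural input is that $(|\nabla|^t\widetilde\psi)_k$ and $(|\nabla|^{-s}\widetilde\psi)_j$ are again Schwartz functions in $\S_\infty$ with the same support-in-frequency-annulus behavior, scaled by $2^{tk}$ and $2^{-sj}$ respectively; I would need a short lemma (or cite the standard fact) that $|\nabla|^r$ applied to a Schwartz function with all vanishing moments is again such a function. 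The coefficient $\lambda_{j,k}(x,y)=\langle T^*\psi_j^x,\psi_k^y\rangle$ is exactly the object estimated in Corollary \ref{c:ao}, which is why the hypotheses $\nu\le L\le M+\nu$, $WBP_\nu$, and $T^*(x^\alpha)=0$ for $|\alpha|\le L$ are imposed.

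Next I would define $K_{s,t}(x,y)$ formally as $\sum_{j,k} 2^{tk-sj} \int_{\R^{2n}} \lambda_{j,k}(u,v)\, (|\nabla|^t\widetilde\psi)_k(v-y)\, (|\nabla|^{-s}\widetilde\psi)_j(u-x)\, du\, dv$ and prove the stated size and regularity estimates. The mechanism is the standard almost-orthogonality/kernel-summation argument: insert the Corollary \ref{c:ao} bound $|D_x^\alpha D_y^\beta \lambda_{j,k}(x,y)| \lesssim 2^{(\nu+|\alpha|)j + |\beta|k} 2^{(\widetilde L+\gamma')\min(0,j-k)} \Phi_{\min(j,k)}^{n+\nu+M+\gamma}(x-y)$, convolve against the rapidly decreasing $(|\nabla|^t\widetilde\psi)_k$ and $(|\nabla|^{-s}\widetilde\psi)_j$ using the $\Phi_k^N * \Phi_j^M \lesssim \Phi_{\min(j,k)}^{\min(M,N)}$ property, collect the scaling factors $2^{tk-sj}$, and then sum the geometric series in $j$ and $k$ via Lemma \ref{l:kernelsum}. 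The convergence of the $j$-sum requires $s > \nu$ (to control $\sum_j 2^{(\nu-s)j}$-type terms on one side) and $s - (\nu+|\alpha|) + \ldots$ positive, which is the source of the constraint $|\alpha| < s-\nu$; the $k$-sum convergence requires $t < \lfloor L-\nu\rfloor + \gamma$ and produces the constraint $|\beta| < \lfloor L-\nu\rfloor + \gamma - t$; and balancing the two sums when $j$ and $k$ are comparable, together with the decay exponent $n+\nu+M+\gamma$ surviving the double convolution, gives the joint condition $|\alpha|+|\beta| < n+M+\gamma+s-t$. The Hölder estimates \eqref{sqker2}-type bounds come out the same way, trading a power of $2^{-\min(j,k)}$ for $|h|^{\gamma'}$ and absorbing the loss $\delta = \gamma'$ in the exponent, which forces $0 < \gamma' < \gamma$ and the slightly shrunk ranges in the regularity statement.

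Then I would check that the bilinear form and its kernel actually agree on $\S_\infty$: this is where one argues that the double sum converges absolutely when paired against $f,g\in\S_\infty$ (using the decay of $\widetilde\psi_k*f$ both for $k\to+\infty$ and $k\to-\infty$, the latter using the vanishing moments of $f$), justifying the interchange of summation and integration, and that the kernel representation holds for $(f,g)$ with disjoint supports — for which one just uses the size estimate on $K_{s,t}$ away from the diagonal and a limiting/truncation argument like the one already used to justify Definition \ref{d:Tx}. Finally, for the continuity of $T_{s,t}$ and $T_{s,t}^*$ from $\S_P$ into $\S'$ for $P$ large, I would estimate $|\langle T_{s,t}f, g\rangle|$ by finitely many Schwartz seminorms of $f$ and $g$, again via the same almost-orthogonality sums, noting that $T_{s,t}^*$ has the analogous expression with the roles of $(s,j)$ and $(t,k)$ and of $\psi$ and $\widetilde\psi$ swapped, so symmetry of the argument handles it. The main obstacle I anticipate is the bookkeeping of the four or five simultaneous inequalities among $s,t,\nu,L,M,\gamma,|\alpha|,|\beta|$ that govern which geometric series converge in each of the regimes $j\ge k$ and $j<k$; getting the exact ranges stated (rather than something slightly worse) requires carefully exploiting the $2^{(\widetilde L+\gamma')\min(0,j-k)}$ gain in Corollary \ref{c:ao} and choosing the auxiliary exponents $\gamma',\gamma'',\delta$ optimally, and keeping track of where the floor function $\lfloor L-\nu\rfloor$ versus $L-\nu$ matters.
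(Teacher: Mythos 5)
Your proposal is correct and follows essentially the same route as the paper: expand via the Calder\'on reproducing formula, push $|\nabla|^t$ and $|\nabla|^{-s}$ onto the $\widetilde\psi$ factors, define $K_{s,t}$ as the double sum of convolved kernels, estimate $\lambda_{j,k}$ with Corollary \ref{c:ao}, and sum the resulting geometric series (split into $j\le k$ and $j>k$) with Lemma \ref{l:kernelsum}, using auxiliary exponents $\gamma\,'<\gamma\,''<\gamma$ for the H\"older bounds. You also correctly identify the source of each parameter constraint, so no substantive gap remains.
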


\begin{proof}
Let $\psi\in\mathcal D_{P}$ and $\widetilde\psi\in\S_\infty$ be as in Lemma \ref{l:calderon}.  Define $\Lambda_{j,k}=Q_kTQ_j$, whose kernel is given by $\lambda_{j,k}(x,y)=\<T^*\psi_j^x,\psi_k^y\>$ for $j,k\in\Z$ and $x,y\in\R^n$.  For any $f,g\in\S_\infty$, it follows that
\begin{align*}
\<|\nabla|^{-s}T|\nabla|^tf,g\>&=\sum_{j,k\in\Z}\<|\nabla|^{-s}\widetilde Q_jQ_jTQ_k\widetilde Q_k|\nabla|^tf,g\>\\
&=\sum_{j,k\in\Z}2^{tk-sj}\int_{\R^n}Q_jTQ_k\widetilde Q_k^tf(u)\widetilde Q_j^{-s}g(u)du\\
&=\sum_{j,k\in\Z}2^{tk-sj}\int_{\R^{2n}}\(\int_{\R^{2n}}\lambda_{j,k}(u,v)\widetilde\psi_k^t(v-y)\widetilde \psi_j^{-s}(u-x)dv\,du\)g(x)f(y)dy\,dx.
\end{align*}
Here we denote $\widetilde Q_k^tf=(|\nabla|^t\psi)_k*f$ and likewise for $\widetilde Q_j^{-s}$.  For $x\neq y$, define
\begin{align*}
K_{s,t}(x,y)=\sum_{j,k\in\Z}2^{tk-sj}\int_{\R^{2n}}\lambda_{j,k}(u,v)\widetilde\psi_k^t(v-y)\widetilde \psi_j^{-s}(u-x)dv\,du.
\end{align*}
Let $0<\gamma\,'<\gamma$ and $\alpha,\beta\in\N_0^n$ satisfying $|\alpha|<s-\nu$, $|\beta|<\widetilde L+\gamma-t$, and $|\alpha|+|\beta|<n+M+\gamma+s-t$, where $\widetilde L=\lfloor L-\nu\rfloor$.  Then for all $x\neq y$, using Corollary \ref{c:ao}, we have
\begin{align*}
&\sum_{j,k\in\Z}2^{tk-sj}\left|D_x^\alpha D_y^\beta\int_{\R^{2n}}\lambda_{j,k}(u,v)\widetilde\psi_k^t(v-y)\widetilde \psi_j^{-s}(u-x)dv\,du\right|\\
&\hspace{1cm}\leq\sum_{j,k\in\Z}2^{(|\alpha|-s)j+(t+|\beta|)k}\int_{\R^{2n}}|\lambda_{j,k}(u,v)(D^\beta\widetilde\psi)_k^t(v-y)(D^\alpha\widetilde \psi)_j^{-s}(u-x)|dv\,du\\
&\hspace{1cm}\leq\sum_{j,k\in\Z}2^{(\nu+|\alpha|-s)j+(t+|\beta|)k}2^{(\widetilde L+\gamma\,')\min(0,j-k)}\\
&\hspace{3cm}\times\int_{\R^{2n}} \Phi_{\min(j,k)}^{n+\nu+M+\gamma}(u-v)\Phi_k^{n+\nu+M+\gamma}(v-y)\Phi_j^{n+\nu+M+\gamma}(x-u)dv\,du\\
&\hspace{1cm}\leq\sum_{j,k\in\Z}2^{(\nu+|\alpha|-s)j+(t+|\beta|)k}2^{(\widetilde L+\gamma\,')\min(0,j-k)} \Phi_{\min(j,k)}^{n+\nu+M+\gamma}(x-y)\\
&\hspace{1cm}\less\sum_{j,k\in\Z:j\leq k}2^{(\widetilde L+\gamma\,'+\nu+|\alpha|-s)j}2^{(t+|\beta|-\widetilde L-\gamma\,')k} \Phi_j^{n+\nu+M+\gamma}(x-y)\\
&\hspace{3cm}+\sum_{j,k\in\Z:j>k}2^{(\nu+|\alpha|-s)j}2^{(t+|\beta|)k} \Phi_k^{n+\nu+M+\gamma}(x-y)\\
&\hspace{1cm}\less\sum_{j\in\Z}2^{(\nu+|\alpha|+|\beta|+t-s)j} \Phi_j^{n+\nu+M+\gamma}(x-y)+\sum_{k\in\Z}2^{(\nu+|\alpha|+|\beta|+t-s)k} \Phi_k^{n+\nu+M+\gamma}(x-y)\\
&\hspace{1cm}\less\frac{1}{|x-y|^{n+\nu+|\alpha|+|\beta|+t-s}}.
\end{align*}
Here we use that $t+|\beta|<\widetilde L+\gamma$ and $|\alpha|<s-\nu$ to assure that the summations above converge and Lemma \ref{l:kernelsum} to justify the last inequality.  It follows that for $x\neq y$, we have
\begin{align*}
|D_x^\alpha D_y^\beta K_{s,t}(x,y)|&\leq\sum_{j,k\in\Z}2^{tk-sj}\left|D_x^\alpha D_y^\beta\int_{\R^{2n}}\lambda_{j,k}(u,v)\widetilde\psi_k^t(v-y)\widetilde \psi_j^{-s}(u-x)dv\,du\right|\\
&\less\frac{1}{|x-y|^{n+\nu+|\alpha|+|\beta|+t-s}}.
\end{align*}
Now suppose $\alpha,\beta\in\N_0^n$ with $|\alpha|<s-(\nu+\gamma\,')$, $|\beta|<\widetilde L+\gamma-\gamma\,'-t$, and $|\alpha|+|\beta|<n+M+\gamma+s-(t+\gamma\,')$.  For any $x,y,h\in\R^n$ satisfying $|h|<|x-y|/2$, we have
\begin{align*}
&\sum_{j,k\in\Z}2^{tk-sj}\left|D_x^\alpha D_y^\beta\int_{\R^{2n}}\lambda_{j,k}(u,v)\widetilde\psi_k^t(v-y)(\widetilde \psi_j^{-s}(u-x)-\widetilde \psi_j^{-s}(u-x-h))dv\,du\right|\\
&\hspace{1cm}\leq|h|^{\gamma\,'}\sum_{j,k\in\Z}2^{(\nu+|\alpha|-s+\gamma\,')j+(t+|\beta|)k}2^{(\widetilde L+\gamma\,')\min(0,j-k)}\\
&\hspace{3cm}\times\int_{\R^{2n}} \Phi_{\min(j,k)}^{n+\nu+M+\gamma}(u-v)\Phi_k^{n+\nu+M+\gamma}(v-y)\Phi_j^{n+\nu+M+\gamma}(x-u)dv\,du\\
&\hspace{1cm}\less|h|^{\gamma\,'}\sum_{j\in\Z}2^{(\nu+|\alpha|+|\beta|+\gamma\,'+t-s)j} \Phi_j^{n+\nu+M+\gamma}(x-y)\\
&\hspace{3cm}+|h|^{\gamma\,'}\sum_{k\in\Z}2^{(\nu+|\alpha|+|\beta|+\gamma\,'+t-s)k}\Phi_{k}^{n+\nu+M+\gamma}(x-y)\\
&\hspace{1cm}\less\frac{|h|^{\gamma\,'}}{|x-y|^{n+\nu+|\alpha|+|\beta|+\gamma\,'+t-s}}.
\end{align*}
Here we use that $|\beta|<\widetilde L+\gamma-\gamma\,'-t$ and $|\alpha|<s-(\nu+\gamma\,')$ so that the summations above converge, and Lemma \ref{l:kernelsum} for the last line.  Fix $\gamma\,'<\gamma\,''<\gamma$ such that $|\beta|<\widetilde L+\gamma\,''-\gamma\,'-t$.  By a similar argument, but applying Corollary \ref{c:ao} with $\gamma\,''$ in place of $\gamma\,'$, it follows that
\begin{align*}
&\sum_{j,k\in\Z}2^{tk-sj}\left|D_x^\alpha D_y^\beta\int_{\R^{2n}}\lambda_{j,k}(u,v)(\widetilde\psi_k^t(v-y)-\widetilde\psi_k^t(v-y-h))\widetilde \psi_j^{-s}(u-x)dv\,du\right|\\
&\hspace{2cm}\less|h|^{\gamma\,'}\sum_{j,k\in\Z:j\leq k}2^{(\widetilde L+\gamma\,''+\nu+|\alpha|-s)j}2^{(t+|\beta|+\gamma\,'-\widetilde L-\gamma\,'')k} \Phi_j^{n+\nu+M+\gamma}(x-y)\\
&\hspace{4cm}+|h|^{\gamma\,'}\sum_{j,k\in\Z:j>k}2^{(\nu+|\alpha|-s)j+(t+|\beta|+\gamma\,')k}\Phi_{k}^{n+\nu+M+\gamma}(x-y)\\
&\hspace{2cm}\less\frac{|h|^{\gamma\,'}}{|x-y|^{n+\nu+|\alpha|+|\beta|+\gamma\,'+t-s}},
\end{align*}
for which we use that $|\beta|<\widetilde L+\gamma\,''-\gamma\,'-t$ and $|\alpha|<s-\nu$.
\end{proof}

\begin{theorem}\label{t:calculus}
Let $\nu\in\R$, $L$ be an integer with $L\geq|\nu|$, $(L-\nu)_*<\gamma\leq1$, and $M\geq\max(L,L-\nu)$.  If $T\in SIO_\nu(M+\gamma)$ satisfies $WBP_\nu$ and $T^*(x^\alpha)=0$ for all $|\alpha|\leq L$, then for each $\nu<s<\nu+\lfloor L-\nu\rfloor+\gamma$ and $0<t<\lfloor L-\nu\rfloor+\gamma$, there exists $T_{s,t}\in CZO_{\nu+t-s}$ such that $|\nabla|^{-s}T|\nabla|^tf-T_{s,t}f$ is a polynomial for all $f\in\S_\infty$.
\end{theorem}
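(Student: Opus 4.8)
The plan is to deduce Theorem \ref{t:calculus} from Theorem \ref{t:1sidecalc} together with the boundedness machinery. First I would check that the hypotheses here are a special case of those in Theorem \ref{t:1sidecalc}: given $L\geq|\nu|$, $(L-\nu)_*<\gamma\leq1$, and $M\geq\max(L,L-\nu)$, one has $\nu\leq L\leq M+\nu$ (the second inequality because $M\geq L-\nu$), so Theorem \ref{t:1sidecalc} applies. For the claimed ranges $\nu<s<\nu+\lfloor L-\nu\rfloor+\gamma$ and $0<t<\lfloor L-\nu\rfloor+\gamma$, the conditions $t<\lfloor L-\nu\rfloor+\gamma$ and $s>\nu$ hold immediately, and $t-s<n+M+\gamma$ holds trivially since $t<\lfloor L-\nu\rfloor+\gamma\leq M+\gamma$ and $s>\nu$. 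Hence Theorem \ref{t:1sidecalc} produces $T_{s,t}\in SIO_{\nu+t-s}(\gamma')$ for all $0<\gamma'<\gamma$, with $\langle|\nabla|^{-s}T|\nabla|^tf,g\rangle=\langle T_{s,t}f,g\rangle$ for all $f,g\in\S_\infty$, which in particular gives that $|\nabla|^{-s}T|\nabla|^tf-T_{s,t}f$ annihilates $\S_\infty$ as a functional, i.e.\ is a polynomial for each $f\in\S_\infty$. So the remaining content is exactly upgrading $T_{s,t}\in SIO_{\nu+t-s}$ to $T_{s,t}\in CZO_{\nu+t-s}$, i.e.\ showing $T_{s,t}$ extends boundedly from $\dot W^{\nu+t-s,p}$ into $L^p$ for all $1<p<\infty$.

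For the boundedness upgrade I would first record that $T_{s,t}$ itself inherits vanishing-moment and weak-boundedness information from $T$. The natural route is: $T_{s,t}^*=|\nabla|^t T^* |\nabla|^{-s}$ modulo polynomials, and since $|\nabla|^{-s}$ applied to a polynomial-reproducing situation together with $T^*(x^\alpha)=0$ for $|\alpha|\leq L$ forces $T_{s,t}^*(x^\alpha)=0$ for the appropriate range of $\alpha$ (here I would use the same telescoping/Taylor argument used to justify Definition \ref{d:Tx} is well posed, applied to the explicit kernel $K_{s,t}$). One also checks $WBP_{\nu+t-s}$ for $T_{s,t}$ directly from the series representation $\langle T_{s,t}f,g\rangle=\sum_{j,k}2^{tk-sj}\int\int \langle T^*\psi_j^x,\psi_k^y\rangle(|\nabla|^t\widetilde\psi)_k*f(y)(|\nabla|^{-s}\widetilde\psi)_j*g(x)\,dx\,dy$ by pairing against bump functions and invoking the estimates of Corollary \ref{c:ao}. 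Once $T_{s,t}\in SIO_{\nu+t-s}$ satisfies $WBP_{\nu+t-s}$ and the relevant $T_{s,t}^*(x^\alpha)=0$ conditions, I would feed this into the boundedness results of Section \ref{Sect5} (the forthcoming Theorems \ref{t:calculus}--\ref{t:TriebelLizorkinBounds} and, for the unweighted $\dot W^{\cdot,p}\to L^p$ statement, the Triebel–Lizorkin boundedness specialized to $q=2$), which is precisely the mechanism by which an $SIO$ with vanishing moments becomes a $CZO$.

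Alternatively—and this is probably the cleaner packaging—the intended proof may simply chain Theorem \ref{t:1sidecalc} with the (later) Theorem on $\dot F$-space boundedness: apply Theorem \ref{t:1sidecalc} to get $T_{s,t}\in SIO_{\nu+t-s}(\gamma')$, verify its vanishing moments, then quote the boundedness theorem of Section \ref{Sect5} to conclude $T_{s,t}$ maps $\dot F^{\nu+t-s,2}_p=\dot W^{\nu+t-s,p}$ into $\dot F^{0,2}_p=L^p$. I would present it in that order. The main obstacle I anticipate is the bookkeeping in propagating the exact range of vanishing moments through the composition: one must be careful that the inequalities $\nu<s<\nu+\lfloor L-\nu\rfloor+\gamma$ and $0<t<\lfloor L-\nu\rfloor+\gamma$ are precisely what is needed so that $T_{s,t}^*$ vanishes on enough polynomials (and $T_{s,t}$ has enough kernel smoothness, via the $\alpha,\beta$ ranges in Theorem \ref{t:1sidecalc}) to land in the hypotheses of the Section \ref{Sect5} boundedness theorem — getting the arithmetic of these index constraints to line up, rather than any hard analytic estimate, is where the real work sits.
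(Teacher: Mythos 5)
Your reduction to Theorem \ref{t:1sidecalc} is the right first step and matches the paper: $T_{s,t}$ is defined exactly as the double sum $\sum_{j,k}2^{tk-js}\widetilde Q_j^{-s}\Lambda_{j,k}\widetilde Q_k^t$, and $|\nabla|^{-s}T|\nabla|^tf-T_{s,t}f$ being a polynomial follows as you say. The gap is in your upgrade from $SIO_{\nu+t-s}$ to $CZO_{\nu+t-s}$. You propose to verify $WBP_{\nu+t-s}$ and $T_{s,t}^*(x^\alpha)=0$ and then ``feed this into the boundedness results of Section \ref{Sect5}, which is precisely the mechanism by which an $SIO$ with vanishing moments becomes a $CZO$.'' That mechanism does not exist, and the paper's own examples show it cannot: the paraproducts $\Pi_b^0$ with $b\in\dot B_\infty^{0,\infty}\setminus BMO$ lie in $SIO_0(\infty)$, satisfy $WBP_0$ and $(\Pi_b^0)^*(x^\alpha)=0$ for \emph{all} $\alpha$, yet are not $L^2$-bounded, hence not in $CZO_0$. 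Concretely, Theorem \ref{t:TriebelLizorkinBounds} applied to an operator of order $\nu'=\nu+t-s$ yields boundedness $\dot F_p^{\nu'-t',q}\to\dot F_p^{-t',q}$ only for $t'$ in the \emph{open} interval $(\nu',\nu'+\widetilde L'+\gamma)$, so the endpoint $t'=0$ needed for $\dot W^{\nu',p}\to L^p$ is excluded whenever $\nu'\geq0$ (and the dual statement excludes it symmetrically). There is also a secondary obstruction: Theorem \ref{t:1sidecalc} only delivers $T_{s,t}\in SIO_{\nu+t-s}(\gamma')$, with higher kernel derivatives available only under the constraints $|\alpha|<s-\nu$, $|\beta|<\widetilde L+\gamma-t$, which for $s$ near $\nu$ or $t$ near $\widetilde L+\gamma$ is far short of the kernel regularity $M'\geq\max(L',L'-\nu')$ that the Section \ref{Sect5} hypotheses require.

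What the paper actually does is a direct Littlewood--Paley estimate on the composed operator, not an appeal to an abstract $T1$-type criterion for $T_{s,t}$. From Corollary \ref{c:ao} one gets
\begin{align*}
|Q_\ell\widetilde Q_j^{-s}\Lambda_{j,k}f(x)|\less2^{\nu j}\,2^{-K|\ell-j|}\,2^{(\widetilde L+\gamma\,')\min(0,j-k)}\,\mathcal Mf(x),
\end{align*}
with $\gamma\,'$ chosen so that $\max(0,s-\nu-\widetilde L,\,t-\widetilde L)<\gamma\,'<\gamma$. Inserting this into $\|T_{s,t}f\|_{\dot F_p^{0,q}}$ and summing over $j,k,\ell$, the triple sum converges geometrically precisely because of the \emph{strict} inequalities $s>\nu$ and $t<\widetilde L+\gamma$ (they produce the factors $2^{(s-\nu)(k-j)}$ and $2^{(\widetilde L+\gamma\,'+\nu+t-s-\nu-t)(j-k)}$ with negative exponents in the relevant regimes), leaving $\big\|\big(\sum_k2^{q(\nu+t-s)k}[\mathcal M(\widetilde Q_k^tf)]^q\big)^{1/q}\big\|_{L^p}\less\|f\|_{\dot F_p^{\nu+t-s,q}}$ by Fefferman--Stein. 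The boundedness is thus a property of the specific composed structure of $T_{s,t}$ (the interior ranges of $s$ and $t$ doing the work), not a consequence of moment conditions on $T_{s,t}$; your proposal would need to be rewritten around this direct estimate.
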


\begin{proof}
Assume that $T\in SIO_\nu(M+\gamma)$ satisfies $WBP_\nu$ and that there is an integer $P\geq M$ such that $T^*(x^\alpha)=0$ in $\mathcal D_P'$ for all $|\alpha|\leq L$.  Fix $\psi\in\mathcal D_P$ and $\widetilde\psi\in\S_\infty$ as in Lemma \ref{l:calderon}.  Define $\Lambda_{j,k}f=Q_jTQ_k$, whose kernel is given by $\lambda_{j,k}(x)=\<T^*\psi_j^x,\psi_k^y\>$.  Fix $s,t\in\R$ so that $0<s-\nu,t<\widetilde L+\gamma$, and define the operator $T_{s,t}$, which is continuous from $\S$ into $\S'$, by
\begin{align*}
T_{s,t}f(x)=\sum_{j,k\in\Z}2^{tk-js}\widetilde Q_j^{-s}\Lambda_{j,k}\widetilde Q_k^tf(x).
\end{align*}
By Theorem \ref{t:1sidecalc}, it follows that $T_{s,t}\in SIO_{\nu+t-s}(\gamma\,')$ for all $0<\gamma\,'<\gamma$ and that $|\nabla|^{-s}T|\nabla|^tf-T_{s,t}f$ is a polynomial for all $f\in\S_\infty$.  By Corollary \ref{c:ao}, it follows that
\begin{align*}
|Q_\ell\widetilde Q_j^{-s}\Lambda_{j,k}f(x)|\less 2^{\nu j}2^{-K|\ell-j|}2^{(\widetilde L+\gamma\,')\min(0,j-k)}\mathcal Mf(x),
\end{align*}
where we choose $\gamma\,'$ so that $\max(0,s-\nu-\widetilde L,t-\widetilde L)<\gamma\,'<\gamma$ and $K>0$.  Then for all $f\in\S_P$ and $1<p,q<\infty$, it follows that
\begin{align*}
||T_{s,t}f||_{\dot F_p^{0,q}}&\leq\left|\left|\(\sum_{\ell\in\Z}\[\sum_{j,k\in\Z}2^{tk-js}|Q_\ell\widetilde Q_j^{-s}\Lambda_{j,k}\widetilde Q_k^tf|\]^q\)^{1/q}\right|\right|_{L^p}\\
&\less\left|\left|\(\sum_{\ell\in\Z}\[\sum_{j,k\in\Z}2^{tk-js}2^{\nu j}2^{-K|\ell-j|}2^{(\widetilde L+\gamma\,')\min(0,j-k)}\mathcal M(\widetilde Q_k^tf)\]^q\)^{1/q}\right|\right|_{L^p}\\
&\less\left|\left|\(\sum_{\ell,j,k\in\Z}2^{q(\nu+t-s)k}2^{(s-\nu) (k-j)}2^{-K|\ell-j|}2^{(\widetilde L+\gamma\,')\min(0,j-k)}\[\mathcal M(\widetilde Q_k^tf)\]^q\)^{1/q}\right|\right|_{L^p}\\
&\less\left|\left|\(\sum_{k\in\Z}2^{q(\nu+t-s)k}\[\mathcal M(\widetilde Q_k^tf)\]^q\)^{1/q}\right|\right|_{L^p}\less||f||_{\dot F_p^{\nu+t-s,q}}.
\end{align*}
Therefore $T_{s,t}$ can be extended to a bounded linear operator from $\dot F_p^{\nu+t-s,q}$ into $\dot F_{p}^{0,q}$ by density.  In particular taking $q=2$, $T_{s,t}$ can be extended to a bounded linear operator from $\dot W^{\nu+t-s,p}$ into $L^p$ for all $1<p<\infty$.  Then we have verified that $T_{s,t}\in CZO_{\nu+t-s}(\gamma\,')$ for all $0<\gamma\,'<\gamma$.
\end{proof}

\section{Boundedness of Singular Integral Operators}\label{Sect5}

In this section, we show that $T^*(x^\alpha)=0$ conditions for $T\in SIO_\nu$ are sufficient for several boundedness properties.  The first two results, in Theorems \ref{t:BesovBounds} and \ref{t:TriebelLizorkinBounds}, are on negative smoothness Besov and Triebel-Lizorkin spaces, and they are proved directly with the help of Corollary \ref{c:ao} and Theorem \ref{t:1sidecalc}.  The last two boundedness results, in Corollaries \ref{c:dual1} and \ref{c:dual2}, are consequences of Theorems \ref{t:BesovBounds} and \ref{t:TriebelLizorkinBounds} by duality.  To aid in the discussion of the boundedness of these spaces, we describe some geometric properties of the parameter space related to the Triebel-Lizorkin space estimates for $T$.  Another interesting feature of the results in this section is that cancellation properties $T^*(x^\alpha)=0$ for $T$ allow for us to conclude boundedness on certain weighted Triebel-Lizorkin spaces $\dot F_{p,w}^{s,q}$, where the weight $w$ does not belong to $A_p$.  This type of behavior for operators was observed in \cite{LZ,HO} in relation to Hardy spaces.  We should also note that the boundedness results proved in this section are also sufficient for $T^*(x^\alpha)=0$ conditions.  This is addressed in the next section.

\begin{theorem}\label{t:BesovBounds}
Let $\nu\in\R$, $L$ be an integer with $L\geq|\nu|$, $M\geq\max(L,L-\nu)$, $(L-\nu)_*<\gamma\leq1$, and $T\in SIO_\nu(M+\gamma)$ satisfy $WBP_\nu$.  If $T^*(x^\alpha)=0$ for all $|\alpha|\leq L$, then $T$ can be extended to a bounded operator from $\dot B_{p,w}^{\nu-t,q}$ into $\dot B_{p,w}^{-t,q}$ for all $1<p<\infty$, $0<q<\infty$, $w\in A_p$, and $\nu<t<\nu+\lfloor L-\nu\rfloor+\gamma$.
\end{theorem}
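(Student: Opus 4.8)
The plan is to use the Littlewood--Paley/almost-diagonal machinery set up in Section~\ref{Sect3}, together with the operator-calculus identity from Theorem~\ref{t:1sidecalc}, to reduce the weighted Besov estimate to a purely scalar summation estimate on the coefficient matrix, followed by a weighted vector-valued bound. First I would reduce to $f\in\S_\infty$ by density and exhibit the weighted Calder\'on reproducing formula $f=\sum_{j\in\Z}\widetilde Q_j Q_j f$ (Lemma~\ref{l:calderon}), so that it suffices to control $\psi_\ell * Tf$ for each $\ell\in\Z$. Expanding $Tf=\sum_{j\in\Z}T\widetilde Q_j Q_j f$ and pairing with $\psi_\ell$, we obtain $\psi_\ell * Tf = \sum_{j\in\Z}\int \lambda_{\ell,j}(x,y)\,(\widetilde Q_j Q_j f)(y)\,dy$ with $\lambda_{\ell,j}(x,y)=\langle T^*\psi_\ell^x,\widetilde\psi_j^y\rangle$; here I would invoke Corollary~\ref{c:ao} (with $\widetilde\psi$ in the role of the second test function, which is legitimate since $\widetilde\psi\in\S_\infty\subset\mathcal D_P$ once we truncate, or more cleanly by using the discrete version in Lemma~\ref{l:wavelet}) to get
\begin{align*}
|\lambda_{\ell,j}(x,y)|\lesssim 2^{\nu\ell}\,2^{(\widetilde L+\gamma\,')\min(0,\ell-j)}\,\Phi_{\min(\ell,j)}^{n+\nu+M+\gamma}(x-y),
\end{align*}
where $\widetilde L=\lfloor L-\nu\rfloor$ and $\gamma\,'$ is chosen with $\max(0,t-\nu-\widetilde L)<\gamma\,'<\gamma$ (this is exactly where the constraint $t<\nu+\widetilde L+\gamma$ enters).

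The second step is the pointwise/$L^p_w$ bound on each dyadic piece. Writing $g_j=\widetilde Q_j Q_j f$ (so $\|g_j\|_{L^p_w}\lesssim\|Q_jf\|_{L^p_w}$ and $2^{(\nu-t)j}\|Q_jf\|_{L^p_w}$ is the $\dot B^{\nu-t,q}_{p,w}$-sequence), the convolution-kernel estimate $\Phi_{\min(\ell,j)}^{N}*|g_j|(x)\lesssim \mathcal M g_j(x)$ for $N>n$ gives
\begin{align*}
\|\psi_\ell*Tf\|_{L^p_w}\lesssim \sum_{j\in\Z}2^{\nu\ell}\,2^{(\widetilde L+\gamma\,')\min(0,\ell-j)}\,\|\mathcal M g_j\|_{L^p_w}\lesssim \sum_{j\in\Z}2^{\nu\ell}\,2^{(\widetilde L+\gamma\,')\min(0,\ell-j)}\,\|Q_jf\|_{L^p_w},
\end{align*}
using that $\mathcal M$ is bounded on $L^p_w$ for $w\in A_p$, $1<p<\infty$. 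Multiplying by $2^{-t\ell}$ and inserting $2^{(t-\nu)j}2^{(\nu-t)j}$ produces
\begin{align*}
2^{-t\ell}\|\psi_\ell*Tf\|_{L^p_w}\lesssim \sum_{j\in\Z}a_{\ell,j}\,\big(2^{(\nu-t)j}\|Q_jf\|_{L^p_w}\big),\qquad a_{\ell,j}=2^{(\nu-t)(\ell-j)}2^{(\widetilde L+\gamma\,')\min(0,\ell-j)}.
\end{align*}
One checks $a_{\ell,j}\lesssim 2^{-\varepsilon|\ell-j|}$ for some $\varepsilon>0$: when $\ell\le j$ the exponent is $(\nu-t)(\ell-j)\le$ (since $t>\nu$) a negative multiple of $|\ell-j|$, and when $\ell>j$ it is $(\nu-t+\widetilde L+\gamma\,')(\ell-j)$, which is negative precisely because $\gamma\,'$ was chosen with $t-\nu-\widetilde L<\gamma\,'$.

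The third step is to pass from this matrix inequality to the Besov norm. Taking $\ell^q$-norms in $\ell$ and applying the discrete Young/Schur-test inequality $\|(\sum_j a_{\ell,j}b_j)_\ell\|_{\ell^q}\lesssim \|(b_j)_j\|_{\ell^q}$ (valid since $\sum_\ell a_{\ell,j}+\sum_j a_{\ell,j}\lesssim 1$ uniformly, by the exponential decay), we conclude
\begin{align*}
\|Tf\|_{\dot B^{-t,q}_{p,w}}=\Big\|\big(2^{-t\ell}\|\psi_\ell*Tf\|_{L^p_w}\big)_\ell\Big\|_{\ell^q}\lesssim \Big\|\big(2^{(\nu-t)j}\|Q_jf\|_{L^p_w}\big)_j\Big\|_{\ell^q}\lesssim \|f\|_{\dot B^{\nu-t,q}_{p,w}},
\end{align*}
and density of $\S_\infty$ in $\dot B^{\nu-t,q}_{p,w}$ finishes the extension. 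I expect the main obstacle to be a pair of technical bookkeeping points rather than a conceptual one: first, carefully justifying the use of Corollary~\ref{c:ao} with the Schwartz function $\widetilde\psi$ rather than a $\mathcal D_P$ function (handled by a truncation/limiting argument exploiting that $T^*$ maps $\S_P\to\S'$ continuously and the kernel bounds are stable), and second, verifying that the range $\nu<t<\nu+\widetilde L+\gamma$ leaves enough room to choose $\gamma\,'$ strictly between $\max(0,t-\nu-\widetilde L)$ and $\gamma$ so that the off-diagonal decay exponent $\varepsilon$ is genuinely positive; this is exactly the boundary case and must be checked with care. Also one should confirm that $N=n+\nu+M+\gamma$ exceeds $n$ so the maximal-function domination of $\Phi_{\min(\ell,j)}^{N}$ is legitimate, which follows from $M\ge\max(L,L-\nu)\ge 0$ and the sign conventions on $\nu$.
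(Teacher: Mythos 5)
Your argument is correct and rests on exactly the same ingredients as the paper's proof: the almost-diagonal bounds of Corollary \ref{c:ao}, domination by the Hardy--Littlewood maximal function and its $L^p_w$-boundedness for $w\in A_p$, and a Schur/Young summation in which the window $\nu<t<\nu+\widetilde L+\gamma$ is precisely what makes both off-diagonal regimes decay. The only organizational difference is that the paper first passes to $T_t=|\nabla|^{-t}T|\nabla|^t$ via Theorem \ref{t:1sidecalc}, proves $T_t\colon\dot B^{\nu,q}_{p,w}\to\dot B^{0,q}_{p,w}$ (hence a triple sum over $\ell,j,k$ with an extra almost-orthogonality factor $2^{-\tilde K|\ell-j|}$), and then undoes the conjugation; you estimate $T\colon\dot B^{\nu-t,q}_{p,w}\to\dot B^{-t,q}_{p,w}$ directly with a double sum, which is equivalent and slightly leaner since Besov norms commute trivially with $|\nabla|^t$. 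Three small points. First, your reproducing-formula expansion carries an extra $\widetilde Q_j$: the pairing should be against $Q_jf$, not $\widetilde Q_jQ_jf$ (harmless, since your final bound uses $\|Q_jf\|_{L^p_w}$). Second, the cleanest way around the $\mathcal D_P$-versus-$\S_\infty$ issue you flag is to put the compactly supported $\psi$ of Lemma \ref{l:calderon} in the inner slot, writing $\psi_\ell*Tf=\sum_k Q_\ell TQ_k(\widetilde Q_kf)$, so that Corollary \ref{c:ao} applies verbatim. Third, in your decay analysis of $a_{\ell,j}$ the two cases are transposed: since the gain $2^{(\widetilde L+\gamma\,')\min(0,\ell-j)}$ is active when $\ell\leq j$, it is that regime which requires $t<\nu+\widetilde L+\gamma\,'$, while the regime $\ell>j$ only needs $t>\nu$. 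Both constraints are available, so nothing breaks.
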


\begin{proof}
We denote $\widetilde L=\lfloor L-\nu\rfloor$.  Fix $\nu<t<\nu+\widetilde L+\gamma$, $1<p<\infty$, and $0<\gamma\,'<\gamma$ such that $t<\nu+\widetilde L+\gamma\,'$.  Let $T_t$ be as in Theorem \ref{t:1sidecalc} such that $|\nabla|^{-t}T|\nabla|^tf-T_tf$ is a polynomial for all $f\in\S_\infty$.  It follows that
\begin{align*}
\widetilde Q_\ell T_{t}f(x)&=\sum_{j,k\in\Z}2^{t(k-j)}\widetilde Q_\ell\widetilde Q_j^{-t}\Lambda_{j,k}\widetilde Q_k^tf(x).
\end{align*}
For any $K>0$ and $\epsilon>0$ sufficiently small, it follows that
\begin{align*}
|\widetilde Q_\ell\widetilde Q_j^{-s}\Lambda_{j,k}f(x)|&\less 2^{\nu j}2^{-K|\ell-j|}\mathcal Mf(x)\\
|\widetilde Q_\ell\widetilde Q_j^{-s}\Lambda_{j,k}f(x)|&\less 2^{\nu j}2^{(\widetilde L+\gamma\,'+\epsilon)\min(0,j-k)}\mathcal Mf(x).
\end{align*}
The first estimate here is a standard almost orthogonality estimate, and the second is obtained by applying Corollary \ref{c:ao}.  Then it follows that
\begin{align*}
|\widetilde Q_\ell\widetilde Q_j^{-s}\Lambda_{j,k}f(x)|
&\less 2^{\nu (j-k)}2^{-\tilde K|\ell-j|}2^{(\widetilde L+\gamma\,')\min(0,j-k)}2^{\nu k}\mathcal Mf(x)
\end{align*}
for some $\tilde K>0$.  Using that $\nu<t<\nu+\widetilde L+\gamma\,'$, for $w\in A_p$ it follows that
\begin{align*}
||T_tf||_{\dot B_{p,w}^{0,q}}
&\less \(\sum_{\ell\in\Z}\[\sum_{j,k\in\Z:j\leq k}2^{-\tilde K|\ell-j|}2^{(\widetilde L+\gamma\,'+\nu-t)(j-k)}2^{\nu k}||\mathcal M(\widetilde Q_k^tf)||_{L^p_w}\]^q\)^{1/q}\\
&\hspace{1cm}+ \(\sum_{\ell\in\Z}\[\sum_{j,k\in\Z:j>k}2^{-\tilde K|\ell-j|}2^{(t-\nu)(k-j)}2^{\nu k}||\mathcal M(\widetilde Q_k^tf)||_{L^p_w}\]^q\)^{1/q}\\
&\less \(\sum_{\ell,j,k\in\Z:j\leq k}2^{-\tilde K|\ell-j|}2^{(\widetilde L+\gamma\,'+\nu-t)(j-k)}\[2^{\nu k}||\mathcal M(\widetilde Q_k^tf)||_{L^p_w}\]^q\)^{1/q}\\
&\hspace{1cm}+ \(\sum_{\ell,j,k\in\Z:j>k}2^{-\tilde K|\ell-j|}2^{(t-\nu)(k-j)}\[2^{\nu k}||\mathcal M(\widetilde Q_k^tf)||_{L^p_w}\]^q\)^{1/q}\\
&\less \(\sum_{k\in\Z}\[2^{\nu k}||\mathcal M(\widetilde Q_k^tf)||_{L^p_w}\]^q\)^{1/q}\less||f||_{\dot B_{p,w}^{\nu,q}}.
\end{align*}
Therefore $T_t$ is bounded from $\dot B_{p,w}^{\nu,q}$ into $\dot B_{p,w}^{0,q}$ for all $1<p,q<\infty$, $w\in A_p$, and $\nu<t<\nu+\widetilde L+\gamma$.  Then for all $f\in\S_\infty$, it follows that
\begin{align*}
||Tf||_{\dot B_{p,w}^{-t,q}}&=||\,|\nabla|^{-t}T|\nabla|^t(|\nabla|^{-t}f)||_{\dot B_{p,w}^{0,q}}=||T_t(|\nabla|^{-t}f)||_{\dot B_{p,w}^{0,q}}\less||\,|\nabla|^{-t}f||_{\dot B_{p,w}^{0,q}}=||f||_{\dot B_{p,w}^{-t,q}}.
\end{align*}
Therefore $T$ can be extended to a bounded linear operator on $\dot B_{p,w}^{-t,q}$ for all $1<p,q<\infty$, $w\in A_p$, and $\nu<t<\nu+\widetilde L+\gamma$.
\end{proof}

A similar argument to the one above can be made to show that $T$ is bounded on from $\dot F_{p,w}^{\nu-t,q}$ into $\dot F_{p,w}^{-t,q}$ under the same assumptions on $T$ and for the same ranges of parameters (in particular imposing that $w\in A_p$).  However, we do not pursue this argument since we can prove something stronger for Triebel-Lizorkin spaces, where the weight is allowed to range outside of the $A_p$ class corresponding to the Lebesgue space parameter $p$.  This stronger result is the following.

\begin{theorem}\label{t:TriebelLizorkinBounds}
Let $\nu\in\R$, $L$ be an integer with $L\geq|\nu|$, $M\geq\max(L,L-\nu)$, $(L-\nu)_*<\gamma\leq1$, and $T\in SIO_\nu(M+\gamma)$.  Further assume that $T$ satisfies $WBP_\nu$.  If $T^*(x^\alpha)=0$ for all $|\alpha|\leq L$, then $T$ can be extended to a bounded operator from $\dot F_{p,w}^{\nu-t,q}$ into $\dot F_{p,w}^{-t,q}$ for all $\nu<t<\nu+\lfloor L-\nu\rfloor +\gamma$, $1/\lambda<p<\infty$, $\min(1,p)\leq q<\infty$, and $w\in A_{\lambda p}$ where $\lambda =\frac{n+\nu+\lfloor L-\nu\rfloor+\gamma-t}{n}$.  Furthermore, there is an increasing function $N:[1,\infty)\rightarrow(0,\infty)$ (possibly depending on $\nu$, $p$, $t$, and $q$) such that, for the same range of indices, we have
\begin{align*}
||Tf||_{\dot F_{p,w}^{-t,q}}\leq N([w]_{A_p})||f||_{\dot F_{p,w}^{\nu-t,q}}.
\end{align*}
\end{theorem}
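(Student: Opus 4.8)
The plan is to imitate the proof of Theorem~\ref{t:BesovBounds}, but to replace the Hardy--Littlewood maximal function throughout by the ``fattened'' maximal operators $\mathcal M_k^r$ of Lemma~\ref{l:Mj1}, so that at the last step the weighted vector-valued inequality of Lemma~\ref{l:Mj2} can be invoked for weights $w$ in the larger class $A_{\lambda p}$ rather than only $A_p$.  Write $\widetilde L=\lfloor L-\nu\rfloor$ and fix $\nu<t<\nu+\widetilde L+\gamma$.  In the notation of the proof of Theorem~\ref{t:BesovBounds}, I would first reduce, via the lifting isomorphisms $|\nabla|^{-t}\colon\dot F_{p,w}^{\nu-t,q}\to\dot F_{p,w}^{\nu,q}$ and $|\nabla|^{t}\colon\dot F_{p,w}^{0,q}\to\dot F_{p,w}^{-t,q}$ (valid for all $w\in A_\infty$, with norms controlled by the relevant Muckenhoupt characteristic), to an estimate for the lifted operator
\begin{align*}
T_tf=\sum_{j,k\in\Z}2^{t(k-j)}\widetilde Q_j^{-t}\Lambda_{j,k}\widetilde Q_k^tf,\qquad \Lambda_{j,k}=Q_jTQ_k,
\end{align*}
whose kernel $\lambda_{j,k}(x,y)=\<T^*\psi_j^x,\psi_k^y\>$ obeys the size and regularity bounds of Corollary~\ref{c:ao} and which agrees with $|\nabla|^{-t}T|\nabla|^tf$ modulo polynomials on $\S_\infty$ by the Calder\'on reproducing formula (cf.\ Theorem~\ref{t:1sidecalc}).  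The claim then becomes $\|T_tf\|_{\dot F_{p,w}^{0,q}}\leq N([w])\,\|f\|_{\dot F_{p,w}^{\nu,q}}$ for $f\in\S_\infty$, with $N$ increasing.

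For the core estimate I would expand $\widetilde Q_\ell T_tf=\sum_{j,k}2^{t(k-j)}\widetilde Q_\ell\widetilde Q_j^{-t}\Lambda_{j,k}\widetilde Q_k^tf$, combine the almost orthogonality of $\widetilde Q_\ell$ against $\widetilde Q_j^{-t}$ with the bounds of Corollary~\ref{c:ao} for $\lambda_{j,k}$, and then discretize the convolution against $\lambda_{j,k}$ by means of a single-scale reproducing formula at scale $k$ (in the spirit of Lemma~\ref{l:wavelet}), so that Lemma~\ref{l:Mj1} turns that convolution into $2^{\mu\max(0,k-j)}\mathcal M_k^r(\widetilde Q_k^tf)$ for any decay exponent $\mu\le\nu+M+\gamma$ and any $r$ with $\tfrac{n}{n+\mu}<r\le1$.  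This yields, for every $K>0$ and every $0<\gamma\,'<\gamma$, a pointwise bound of the shape
\begin{align*}
|\widetilde Q_\ell T_tf(x)|\lesssim\sum_{j,k\in\Z}2^{-K|\ell-j|}\,2^{\nu j}\,2^{t(k-j)}\,2^{(\widetilde L+\gamma\,')\min(0,j-k)}\,2^{\mu\max(0,k-j)}\,\mathcal M_k^r\big(\widetilde Q_k^tf\big)(x).
\end{align*}
Writing $2^{t(k-j)}2^{\nu j}=2^{\nu k}2^{(t-\nu)(k-j)}$ and factoring out $2^{\nu k}\mathcal M_k^r(\widetilde Q_k^tf)(x)$, the $\ell$-sum is absorbed by $2^{-K|\ell-j|}$, the half $j>k$ converges because $t>\nu$, and the half $j\le k$ converges as soon as $\mu<\nu+\widetilde L+\gamma\,'-t$; a Schur-type bound on the remaining $(j,k)$-coefficients and the $\ell^q$-summation then reduce $\|T_tf\|_{\dot F_{p,w}^{0,q}}$ to $\|(\sum_k(2^{\nu k}\mathcal M_k^r(\widetilde Q_k^tf))^q)^{1/q}\|_{L^p_w}$.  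Choosing $r$ just below $\min(1,p,q)$ and just above $1/\lambda$, and $\mu$ just below $\nu+\widetilde L+\gamma\,'-t$ with $\gamma\,'$ close enough to $\gamma$ that $\tfrac{n}{n+\mu}<r\le 1/\lambda$, the hypotheses $1/\lambda<p$ and $\min(1,p)\le q$ guarantee that such an $r$ exists and that $w\in A_{\lambda p}\subset A_{p/r}$; then Lemma~\ref{l:Mj2} (in its quantitative form, tracking the dependence of the constant on $[w]$ through the weighted Fefferman--Stein inequality of \cite{AJ}, applied with the Littlewood--Paley function $\widetilde\psi$ and the bump-independence of the $\dot F_{p,w}^{\cdot,\cdot}$-norm) bounds this by $N([w])\,\||\nabla|^tf\|_{\dot F_{p,w}^{\nu-t,q}}\sim N([w])\,\|f\|_{\dot F_{p,w}^{\nu,q}}$, using once more the lifting isomorphism.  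Transferring back completes the reduction and gives the stated estimate for $T$, and the same chain of estimates with $L^p_w$ and $\ell^q$ interchanged would recover the Besov analogue as a check.

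The main obstacle is the simultaneous calibration of the three parameters $r$, $\gamma\,'$ and $\mu$.  Lemma~\ref{l:Mj1} forces $\tfrac{n}{n+\mu}<r$; convergence of the double series forces $\mu<\nu+\widetilde L+\gamma\,'-t$; and the given weight lies in $A_{p/r}$ only when $r\le 1/\lambda$ with $\lambda=\tfrac{n+\nu+\widetilde L+\gamma-t}{n}$.  The interplay between the cancellation gain $2^{(\widetilde L+\gamma\,')\min(0,j-k)}$ supplied by the vanishing moment conditions $T^*(x^\alpha)=0$ and the cost $2^{\mu\max(0,k-j)}$ of descending from $\mathcal M$ to the weaker operator $\mathcal M_k^r$ is exactly what determines the endpoint weight class; pushing the argument all the way to $w\in A_{\lambda p}$ — rather than merely to every $A_q$ with $q<\lambda p$ — is the most delicate point, and it is here that the full strength of the hypotheses on $\gamma$, $L$ and $M$ (in particular $(L-\nu)_*<\gamma$) will be used.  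The geometric description of the admissible region of $(t,p,q)$ alluded to before the statement can then be read directly off the inequalities $1/\lambda<p$, $\min(1,p)\le q$ and $\nu<t<\nu+\widetilde L+\gamma$.
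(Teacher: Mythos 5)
Your proposal follows essentially the same route as the paper's proof: reduce via the lifting to the operator $T_t$ of Theorem \ref{t:1sidecalc}, combine the almost-orthogonality estimates from Corollary \ref{c:ao} with a discretization so that Lemma \ref{l:Mj1} converts the convolution into $\mathcal M^r$ at the cost of a factor $2^{\mu\max(0,\cdot)}$ absorbed by the cancellation gain $2^{(\widetilde L+\gamma\,')\min(0,j-k)}$, and conclude with Lemma \ref{l:Mj2}; the paper merely organizes the discretization through the wavelet expansion of $f$ at a fourth scale $m$ (so the maximal functions land on $\widetilde\phi_m*f$, exactly as Lemma \ref{l:Mj2} is stated) rather than at scale $k$. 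Your parameter calibration matches the paper's chain $1/\lambda<\tfrac{n}{n+\mu}<r<\min(1,p)$ (note the small slip where you write $r\le 1/\lambda$ after requiring $r$ just above $1/\lambda$), so the argument is sound.
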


\begin{proof}
Fix $\nu<t<\nu+\widetilde L+\gamma$, $1/\lambda<p<\infty$, and $w\in A_{\lambda p}$.  Here we denote $\widetilde L=\lfloor L-\nu\rfloor$ and $\lambda=\frac{n+\nu+\widetilde L+\gamma-t}{n}$.  Then there exists $1/\lambda<r<\min(1,p)$ such that $w\in A_{\lambda r}$.  Also let $0<\gamma\,'<\gamma$ and $0<\mu<\nu+\widetilde L+\gamma\,'-t$ so that $t<\nu+\widetilde L+\gamma\,'$ and $1/\lambda<\frac{n}{n+\mu}<r<\min(1,p)$.  Let $T_t$ be defined as in Theorem \ref{t:1sidecalc}.  Now we fix $\phi,\widetilde\phi\in\S$ as in Lemma \ref{l:wavelet}.  Then it follows that
\begin{align*}
\widetilde Q_\ell T_{t}f(x)&=\sum_{j,k\in\Z}2^{t(k-j)}Q_\ell\widetilde Q_j^{-t}\Lambda_{j,k}\widetilde Q_k^tf(x)\\
&=\sum_{j,k,m\in\Z}\sum_{Q:\ell(Q)=2^{-m}}2^{t(k-j)}\widetilde Q_\ell \widetilde Q_j^{-t}\Lambda_{j,k}\widetilde Q_k^t\phi_m^{c_Q}(x)\tilde\phi_m*f(c_Q).
\end{align*}
For any any $K>0$ and $\epsilon>0$ sufficiently small, it follows that
\begin{align*}
|\widetilde Q_\ell \widetilde Q_j^{-t}\Lambda_{j,k}\widetilde Q_k^t\phi_m^{c_Q}(x)|&\less2^{\nu j}2^{-K|\ell-j|}\Phi_{\min(\ell,j,k,m)}^{n+M+\gamma}(x-c_Q),\\
|\widetilde Q_\ell \widetilde Q_j^{-t}\Lambda_{j,k}\widetilde Q_k^t\phi_m^{c_Q}(x)|&\less2^{\nu j}2^{-K|k-m|}\Phi_{\min(\ell,j,k,m)}^{n+M+\gamma}(x-c_Q),\\
|\widetilde Q_\ell \widetilde Q_j^{-t}\Lambda_{j,k}\widetilde Q_k^t\phi_m^{c_Q}(x)|&\less2^{\nu j}2^{(\widetilde L+\gamma\,'+\epsilon)\min(0,j-k)}\Phi_{\min(\ell,j,k,m)}^{n+M+\gamma}(x-c_Q).
\end{align*}
As before, the first two lines here follow from standard almost orthogonality estimates, and the third follows from Corollary \ref{c:ao}.  Combining these estimates, it also follows that
\begin{align*}
|Q_\ell \widetilde Q_j^{-t}\Lambda_{j,k}\widetilde Q_k^t\phi_m^{c_Q}(x)|&\less2^{\nu j}2^{-\tilde K|\ell-j|}2^{-\tilde K|k-m|}2^{(L+\gamma\,')\min(0,j-k)}\Phi_{\min(\ell,j,k,m)}^{n+\nu+M+\gamma}(x-c_Q)
\end{align*}
for some $\tilde K>2\mu+|t|$, as long as $K$ is selected sufficiently large, depending on $L$, $\gamma$, and $\nu$.  Then using Lemma \ref{l:Mj1}, it follows that
\begin{align*}
&\sum_{Q:\ell(Q)=2^{-m}}|Q_\ell\widetilde Q_j^{-t}\Lambda_{j,k}\widetilde Q_k^t\phi_m^{c_Q}(x)\tilde\phi_m*f(c_Q)|\\
&\hspace{1cm}\leq 2^{\nu j}2^{-\tilde K|\ell-j|}2^{-\tilde K|k-m|}2^{(\widetilde L+\gamma\,')\min(0,j-k)} 2^{\mu\max(0,m-j,m-k,m-\ell)}\mathcal M_m^r(\tilde\phi_m*f)\\
&\hspace{1cm}\leq2^{\nu j}2^{-(\tilde K-\mu)|\ell-j|}2^{-(\tilde K-2\mu)|k-m|}2^{(\widetilde L+\gamma\,'-\mu)\min(0,j-m)}\mathcal M_m^r(\tilde\phi_m*f).
\end{align*}
Using that $\tilde K>2\mu+|t|$ and that $\nu<t<\nu+\widetilde L+\gamma\,'-\mu$, we also have
\begin{align*}
\sum_{\ell\in\Z}|Q_\ell T_tf|^q&\less\sum_{\ell\in\Z}\[\sum_{j,k,m\in\Z:j\leq m}2^{-(\tilde K-\mu)|\ell-j|}2^{-(\tilde K-2\mu-|t|)|k-m|}2^{(\nu+\widetilde L+\gamma\,'-t-\mu)(j-m)}2^{\nu m}\mathcal M_m^r(\tilde\phi_m*f)\]^q\\
&\hspace{1cm}+\sum_{\ell\in\Z}\[\sum_{j,k,m\in\Z:j>m}2^{(t-\nu)(m-j)}2^{-(\tilde K-\mu)|\ell-j|}2^{-(\tilde K-2\mu-|t|)|k-m|}2^{\nu m}\mathcal M_m^r(\tilde\phi_m*f)\]^q\\
&\less\sum_{\ell,j,k,m\in\Z:j\leq m}2^{-(\tilde K-\mu)|\ell-j|}2^{-(\tilde K-2\mu-|t|)|k-m|}2^{(\nu+\widetilde L+\gamma\,'-t-\mu)(j-m)}\[2^{\nu m}\mathcal M_m^r(\tilde\phi_m*f)\]^q\\
&\hspace{1cm}+\sum_{\ell,j,k,m\in\Z:j>m}2^{(t-\nu)(m-j)}2^{-(\tilde K-\mu)|\ell-j|}2^{-(\tilde K-2\mu-|t|)|k-m|}\[2^{\nu m}\mathcal M_m^r(\tilde\phi_m*f)\]^q\\
&\less\sum_{m\in\Z}\[2^{\nu m}\mathcal M_m^r(\tilde\phi_m*f)\]^q,
\end{align*}
and hence that
\begin{align*}
||T_tf||_{\dot F_{p,w}^{0,q}}&=\left|\left|\(\sum_{\ell\in\Z}|Q_\ell T_tf|^q\)^{1/q}\right|\right|_{L^p_w}\less\left|\left|\(\sum_{m\in\Z}\[2^{\nu m}\mathcal M_m^r(\tilde\phi_m*f)\]^q\)^{1/q}\right|\right|_{L^p_w}\less||f||_{\dot F_{p,w}^{\nu,q}},
\end{align*}
where we use Lemma \ref{l:Mj2} in the last inequality.  Therefore $T_t$ is bounded from $\dot F_{p,w}^{\nu,q}$ into $\dot F_{p,w}^{0,q}$ for all $\nu<t<\nu+\widetilde L+\gamma$, $1/\lambda<p<\infty$, $\min(1,p)\leq q<\infty$, and $w\in A_{\lambda p}$.  Then for the same range of parameters and for all $f\in\S_\infty$, it follows that
\begin{align*}
||Tf||_{\dot F_{p,w}^{-t,q}}&=||\,|\nabla|^{-t}T|\nabla|^t(|\nabla|^{-t}f)||_{\dot F_{p,w}^{0,q}}=||T_t(|\nabla|^{-t}f)||_{\dot F_{p,w}^{0,q}}\less||\,|\nabla|^{-t}f||_{\dot F_{p,w}^{\nu,q}}=||f||_{\dot F_{p,w}^{\nu-t,q}}.
\end{align*}
Therefore $T$ can be extended to a bounded linear operator form $\dot F_{p,w}^{\nu-t,q}$ into $\dot F_{p,w}^{-t,q}$ for all $\nu<t<\nu+\widetilde L+\gamma$, $1/\lambda<p<\infty$, $\min(1,p)\leq q<\infty$, and $w\in A_{\lambda p}$.  It is not hard to note that the dependence on $w\in A_p$ from the argument about yields an estimate depending on a positive power of $[w]_{A_p}$, and hence $N$ can be taken as such (in particular, such a function exists).
\end{proof}

Below we represent $\dot F_p^{t,2}$ in parameter space $(t,\frac{1}{p})$ for $t\in\R$ and $0<p\leq\infty$.  Most of the discussion here will apply for $\dot F_p^{t,q}$ when $q\neq2$, but for simplicity we only discuss restrict to $q=2$.  However, we caution that some of the discussion here and below may not apply when $0<q\leq1$ since the duality of $\dot F_p^{t,q}$ does not behave the same as the $q=2$ case.

With our parameter identification defined, we note that the vertical axis $(t,\frac{1}{p})=(0,\frac{1}{p})$ represents the Hardy spaces $\dot F_p^{0,2}= H^p$ for $0<p<\infty$ and Lebesgue spaces $\dot F_p^{0,2}= L^p$ for $1<p<\infty$.  The horizontal lines given by $(t,\frac{1}{p})$ when $1<p<\infty$ describe the homogeneous Sobolev spaces $\dot F_p^{t,2}= \dot W^{t,p}$.  We will always identify the origin $(t,\frac{1}{p})=(0,0)$ as $\dot F_\infty^{0,\infty}= BMO$, but for $(t,\frac{1}{p})=(t,0)$ our notation is somewhat inconsistent.  Sometimes these locations will represent Sobolev-$BMO$ via $\dot F_\infty^{t,2}= I_t(BMO)$ for $t>0$, and other times they should be interpreted as the Besov-Lipschitz spaces $\dot B_\infty^{t,\infty}$ when $t>0$.  Recall that $\dot B_\infty^{t,\infty}$ is a Lipschitz space only when $t>0$ is not an integer; otherwise it is the Zygmund class of smooth functions; see \cite{Z}.

Let us first consider boundedness results for $T\in SIO_\nu(M+\gamma)$ when $\nu=0$.  The graph on the left in Figure \ref{f:duality} is a depiction of the boundedness properties of $T$ provided in Theorem \ref{t:TriebelLizorkinBounds} (restricted to the unweighted version).  That is, if $T^*(x^\alpha)=0$ for $|\alpha|\leq L$, then $T$ is bounded on $\dot F_p^{t,2}$ when $(t,\frac{1}{p})$ lies in the blue shaded region (excluding the boundary) in the left picture.  If we were to assume in addition that $T$ is $L^2$-bounded, then it also follows that $T$ is bounded on $\dot F_p^{0,2}= H^p$ when $(t,\frac{1}{p})=(0,\frac{1}{p})$ satisfies $\frac{n}{N+L+\gamma}<p<\infty$.  This does not follow from what we've proved here, but these boundedness properties are classical in the Lebesgue space setting when $1<p<\infty$ and were proved in \cite{HartLu1} for $0<p\leq1$.\footnote{Plots appearing in this article were generated using the Desmos.com online graphing tool and Matlab.}

\begin{figure}[h]
\centering
  \includegraphics[width=.95\linewidth]{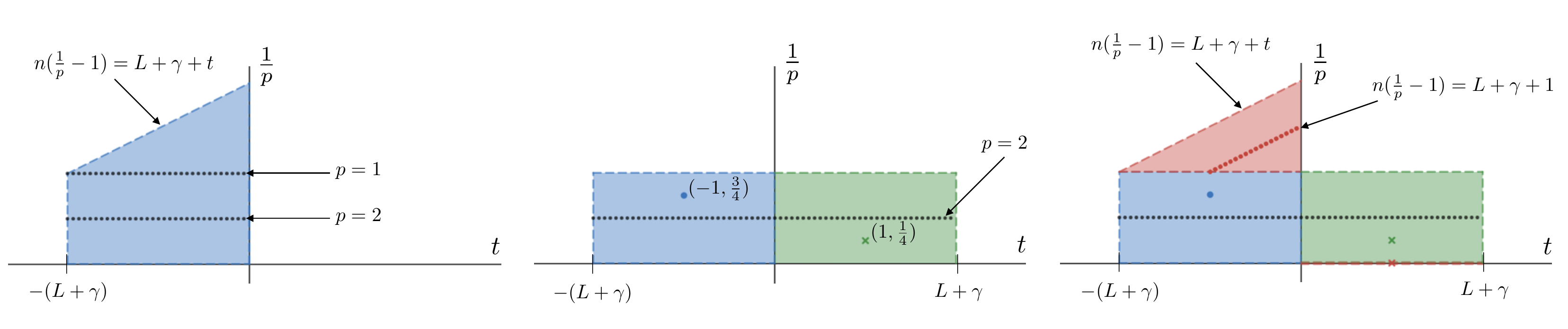}
  \caption{Parameter space $(t,\frac{1}{p})$ for boundedness properties of $T$ and $T^*$  on $\dot F_p^{t,2}$, pictured with $n=2$, $L=1$, and $\gamma=1$.}\label{f:duality}
\end{figure}

The middle picture of Figure \ref{f:duality} restricts the boundedness region for $T$ to where $1<p<\infty$, where the spaces $\dot F_p^{t,2}$ are reflexive.  In this situation, we have $(\dot F_p^{t,2})^*=\dot F_{p'}^{-t,2}$ for $t\in\R$ and $1<p<\infty$.  Geometrically, the dual of $\dot F_p^{t,2}$ can be found by reflecting over the vertical line $t=0$ and the horizontal line $p=2$, as pictured.  Then by duality, $T^*(x^\alpha)=0$ for $|\alpha|\leq L$ implies that $T^*$ is bounded on $\dot F_p^{t,2}$ for all $1<p<\infty$ and $0<t<L+\gamma$.  This boundedness result for $T^*$ is shown in the green shaded region in Figure \ref{f:duality}.

In the picture on the right in Figure \ref{f:duality}, we describe the dual boundedness implications for $T^*$ when $0<p\leq1$, which are more delicate that the ones already discussed.  We consider two situations: where $p=1$ and where $0<p<1$.  It was proved by Frazier and Jawerth \cite{FJ2} that $(\dot F_1^{t,2})^*=\dot F_\infty^{-t,2}$.  Then the boundedness of $T$ on $\dot F_1^{t,2}$ for indices on the horizontal line segment given by $(t,\frac{1}{p})=(t,1)$ with $-(L+\gamma)<t<0$ implies that $T^*$ is bounded on the Sobolev-BMO spaces $\dot F_\infty^{t,2}=I_t(BMO)$ for $0<t<L+\gamma$.  Geometrically, this summarizes boundedness for $T^*$ on the horizontal line $(t,\frac{1}{p})=(t,0)$ with $0\leq t<L+\gamma$, where we make the Triebel-Lizorkin space identification $\dot F_\infty^{t,2}$.  This duality still obeys the geometric rule of reflecting over the lines $t=0$ and $p=2$ to obtain the appropriate indices for dual spaces.

Now we turn our attention to the picture on the right in Figure \ref{f:duality} when $0<p<1$.  The remaining portion of the red region lying to the left of the axis is where $-(L+\gamma)<t<0$ and $\frac{n}{n+L+\gamma-t}<p<1$.  In this situation, we invoke a duality result of Jawerth \cite{Jaw} that says $(\dot F_p^{t,2})^*=\dot B_\infty^{-t+n(1/p-1),\infty}$ for $0<p<1$ and $t\in\R$.  Note that the duals of $\dot F_p^{t,2}$ coincide for several values of $t$ and $p$ here.  In particular, for any $t\in\R$ and $0<p<1$ with $-t+n(1/p-1)=s$ satisfies $(\dot F_p^{t,2})^*=\dot B_\infty^{s,\infty}$.  This is depicted above by the highlighted red line, and the associated red x on the horizontal axis located at $(t,\frac{1}{p})=(1,0)$.  Then by duality $T^*(x^\alpha)=0$ for $|\alpha|\leq L$ implies that $T^*$ is bounded on $\dot B_\infty^{t,\infty}$ for $0<t<L+\gamma$.  This conclusion can be made by duality from the boundedness of $T$ at any point along the appropriate line.  Geometrically, this deviates slightly from the previous cases.  In particular, when $0<p<1$ and $t\in\R$, in order to obtain the appropriate indices for the dual of $\dot F_p^{t,2}$, first project $(t,\frac{1}{p})$ along a line with slope $1/n$ onto the line $p=1$, then reflect over $t=0$ and $p=2$.  Making these geometric manipulations lands the dual indices on the horizontal axis, overlapping with the previous case where $p=1$.  We emphasize that when $0<p<1$, the appropriate interpretation of the pictures above is that $(\dot F_p^{t,2})^*=\dot B_\infty^{-t+n(1/p-1),\infty}$, with this Besov space in place of $\dot F_\infty^{-t,2}$.  Hence the distinction $0<p<1$ versus $p=1$ determines when we interpret the horizontal axis as $\dot F_\infty^{t,2}$ versus $\dot B_\infty^{t,\infty}$.

The estimates for $T$ on $\dot F_p^{t,2}$ indicated in the blue shaded region in the left picture of Figure \ref{f:duality} describes only the the unweighted estimates proved in Theorem \ref{t:TriebelLizorkinBounds}, but we can extend this representation to weighted estimates as well.  In order to do so, consider the parameter space made up of ordered triples of the form $(t,\frac{1}{p},\frac{1}{q})$ for which $T$ is bounded on $\dot F_{p,w}^{t,2}$ when $w\in A_q$.  Theorem \ref{t:TriebelLizorkinBounds} says that the triple $(t,\frac{1}{p},\frac{1}{q})$ represents where $T$ is bounded on the weighted spaces $\dot F_{p,w}^{t,2}$ for all $w\in A_q$ when $\frac{n}{n+L+\gamma-t}\,\frac{1}{p}\leq\frac{1}{q}\leq1$.  This defines a solid in $\R^3$ lying under the blue shaded region on the left picture in Figure \ref{f:duality}, which is shown in Figure \ref{f:duality3d}.

\begin{figure}[h]
\centering
  \includegraphics[width=.85\linewidth]{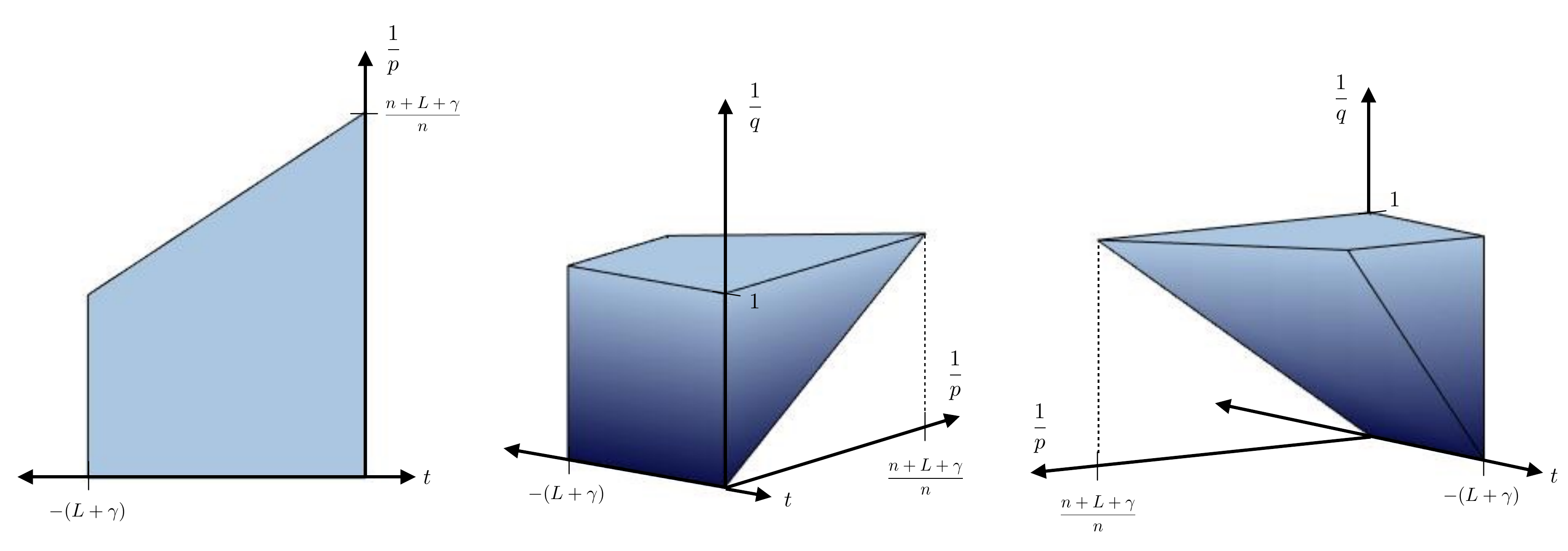}
  \caption{Parameter space $(t,\frac{1}{p},\frac{1}{q})$ for boundedness properties of $T$ on $\dot F_{p,w}^{t,2}$ with $w\in A_q$, pictured with $n=2$, $L=1$, and $\gamma=1$.  The plot on the left depicts the same region as the plot on the left of Figure \ref{f:duality}, which coincides with the $q=1$ cross-section of the middle and right plots.}\label{f:duality3d}
\end{figure}

All of these duality results are made precise in the following corollaries, which we state for a general $\nu\in\R$.  Similar geometric depictions of the boundedness results above for $\nu\neq0$ can be made by tracking the original and/or terminal indices $(t,\frac{1}{p})$ of the boundedness properties of $T$ from $\dot F_p^{t,2}$ into $\dot F_p^{t-\nu,2}$ for $\nu<-t<\nu+\widetilde L+\gamma$ and $\frac{n}{n+\nu+\widetilde L+\gamma+t}<p<\infty$.  Figure \ref{f:duality2} briefly demonstrates how the boundedness of $T$ and $T^*$ is expressed in parameter space when $\nu\neq0$.

\begin{figure}[h]
\centering
  \includegraphics[width=.95\linewidth]{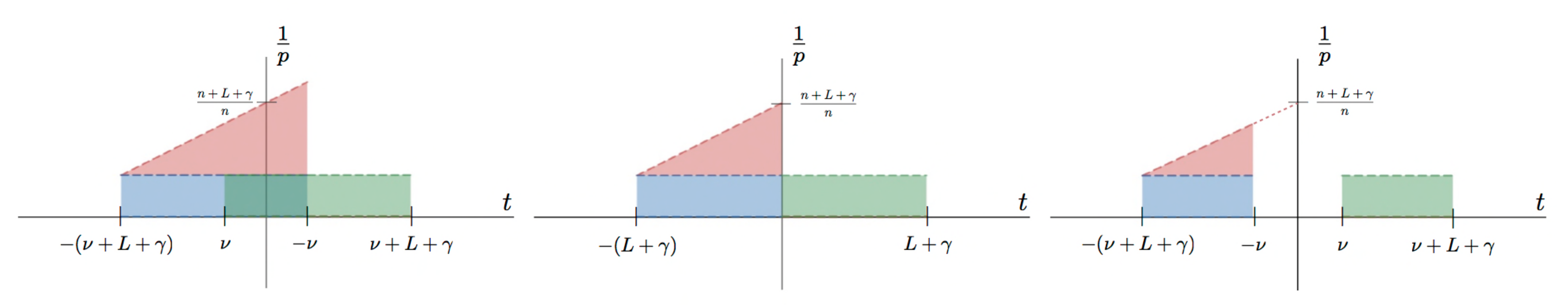}
  \caption{Parameter space for boundedness properties of $T$ from $\dot F^{\nu+t,2}_p$ into $\dot F^{t,2}_p$ and for $T^*$ from $\dot F^{t,2}_p$ into $\dot F^{t-\nu,2}_p$, pictured with $n=2$, $L=3$, and $\gamma=\frac{1}{2}$.  The left, middle, and right plots correspond to $T$ belonging to $SIO_\nu$ for $\nu=-\frac{3}{2},0,\frac{3}{2}$ respectively, which yields $\widetilde L=5,3,\frac{5}{2}$ respectively.}\label{f:duality2}
\end{figure}

\begin{corollary}\label{c:dual1}
Let $\nu\in\R$, $L$ be an integer with $L\geq|\nu|$, $M\geq\max(L,L-\nu)$, $(L-\nu)_*<\gamma\leq1$, and $T\in SIO_\nu(M+\gamma)$.  Further assume that $T$ satisfies $WBP_\nu$.  If $T^*(x^\alpha)=0$ for all $|\alpha|\leq L$, then $T^*$ can be extended to a bounded operator from $\dot F_p^{t,q}$ into $\dot F_p^{t-\nu,q}$ and from $\dot B_p^{t,q}$ into $\dot B_p^{t-\nu,q}$ for all $1<p,q<\infty$ and $\nu<t<\nu+\lfloor L-\nu\rfloor+\gamma$.
\end{corollary}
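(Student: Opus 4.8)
The plan is to deduce the statement for $T^{*}$ by dualizing the boundedness of $T$ supplied by Theorems \ref{t:BesovBounds} and \ref{t:TriebelLizorkinBounds}. I would use the classical duality identifications $(\dot F_p^{s,q})^{*}=\dot F_{p'}^{-s,q'}$ and $(\dot B_p^{s,q})^{*}=\dot B_{p'}^{-s,q'}$, valid for $1<p,q<\infty$ and every $s\in\R$ (see \cite{FJ2}), together with the density of $\S_\infty$ in each of these spaces. The feature that makes this proceed smoothly is that the pairing of an element of $\S_\infty'$ with a function in $\S_\infty$ is merely the restriction of the $\S',\S$ pairing, so working modulo polynomials is harmless in all the computations below.

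First I would fix $1<p,q<\infty$ and $\nu<t<\nu+\lfloor L-\nu\rfloor+\gamma$, and apply Theorems \ref{t:BesovBounds} and \ref{t:TriebelLizorkinBounds} with the conjugate exponents $p'$, $q'$ and the trivial weight $w\equiv1$. Since $w\equiv1\in A_{p'}$, Theorem \ref{t:BesovBounds} gives a bounded extension $T:\dot B_{p'}^{\nu-t,q'}\to\dot B_{p'}^{-t,q'}$. For the Triebel--Lizorkin case, note that $\lambda=\frac{n+\nu+\lfloor L-\nu\rfloor+\gamma-t}{n}>1$, so $p'>1>1/\lambda$, $\min(1,p')=1\le q'$, and $w\equiv1\in A_1\subset A_{\lambda p'}$; hence Theorem \ref{t:TriebelLizorkinBounds} gives a bounded extension $T:\dot F_{p'}^{\nu-t,q'}\to\dot F_{p'}^{-t,q'}$. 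Passing to Banach-space adjoints and invoking the duality identifications converts these into bounded maps $\dot B_p^{t,q}\to\dot B_p^{t-\nu,q}$ and $\dot F_p^{t,q}\to\dot F_p^{t-\nu,q}$, which are the asserted mapping properties.

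It then remains only to identify these adjoints with the extension of $T^{*}$. If $S$ denotes the Banach adjoint of $T:\dot F_{p'}^{\nu-t,q'}\to\dot F_{p'}^{-t,q'}$, then for $f,g\in\S_\infty$ one has $\langle Sg,f\rangle=\langle g,Tf\rangle$; since $g\in\S_\infty$ and $Tf\in\S'$, the right-hand side is the distributional pairing, which equals $\langle T^{*}g,f\rangle$ by the definition of $T^{*}$ for operators in $SIO_\nu$. Hence $Sg=T^{*}g$ in $\S_\infty'$ for every $g\in\S_\infty$, and by density of $\S_\infty$ in $\dot F_p^{t,q}$ and the boundedness of $S$, the operator $S$ is precisely the bounded extension of $T^{*}$. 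The Besov case is identical. I do not anticipate a genuine obstacle here: all of the real analysis already lives in Theorems \ref{t:BesovBounds} and \ref{t:TriebelLizorkinBounds}, and the only points demanding care are the modulo-polynomial bookkeeping in the duality pairing and the verification that the conjugate triple $(p',q',w\equiv1)$ lies within the hypotheses of those theorems for the entire range $1<p,q<\infty$.
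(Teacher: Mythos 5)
Your argument is exactly the paper's intended proof: Corollary \ref{c:dual1} is obtained by applying Theorems \ref{t:BesovBounds} and \ref{t:TriebelLizorkinBounds} in the unweighted case with the conjugate indices $(p',q')$ and then dualizing via $(\dot F_{p'}^{s,q'})^{*}=\dot F_{p}^{-s,q}$ and $(\dot B_{p'}^{s,q'})^{*}=\dot B_{p}^{-s,q}$, identifying the Banach adjoint with $T^{*}$ on the dense subspace $\S_\infty$. The verification that $(p',q',w\equiv1)$ falls within the hypotheses of both theorems is correct, so the proposal is complete and matches the paper's route.
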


This corollary is immediate given Theorems \ref{t:BesovBounds} and \ref{t:TriebelLizorkinBounds} applied only in the unweighted and $1<p,q<\infty$ situation.

\begin{corollary}\label{c:dual2}
Let $\nu\in\R$, $L$ be an integer with $L\geq|\nu|$, $M\geq\max(L,L-\nu)$, $(L-\nu)_*<\gamma\leq1$, and $T\in SIO_\nu(M+\gamma)$.  Further assume that $T$ satisfies $WBP_\nu$.  If $T^*(x^\alpha)=0$ for all $|\alpha|\leq L$, then $T^*$ is bounded from $\dot F_\infty^{t,q}$ into $\dot F_\infty^{t-\nu,q}$ and from $\dot B_\infty^{t,\infty}$ into $\dot B_\infty^{t-\nu,\infty}$ for all $\nu< t<\nu+\widetilde L+\gamma$ and $1<q<\infty$.
\end{corollary}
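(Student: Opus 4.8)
The argument is a duality argument, obtaining both estimates from the $p=1$ endpoint of Theorem~\ref{t:TriebelLizorkinBounds}, in the same spirit that Corollary~\ref{c:dual1} follows from the reflexive range of Theorems~\ref{t:BesovBounds} and \ref{t:TriebelLizorkinBounds}. Write $\widetilde L=\lfloor L-\nu\rfloor$, fix $\nu<t<\nu+\widetilde L+\gamma$, and set $\lambda=\frac{n+\nu+\widetilde L+\gamma-t}{n}$; the constraint $t<\nu+\widetilde L+\gamma$ is exactly the statement $\lambda>1$, so that $1/\lambda<1$ and the constant weight $1$ belongs to $A_\lambda$.

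For the Triebel--Lizorkin estimate, fix $1<q<\infty$ and apply Theorem~\ref{t:TriebelLizorkinBounds} with $p=1$, weight $w\equiv1$, and fine index $q'$ (legitimate since $1/\lambda<1=p$, $1\in A_\lambda=A_{\lambda p}$, and $\min(1,p)=1\leq q'<\infty$ because $q>1$). This gives that $T$ extends to a bounded operator $T:\dot F_1^{\nu-t,q'}\to\dot F_1^{-t,q'}$; since $\S_\infty$ is dense in each $\dot F_1^{a,q'}$ (recalled in Section~\ref{Sect2}) this extension is uniquely determined. Let $A$ denote its Banach-space adjoint. By the Frazier--Jawerth duality $(\dot F_1^{a,q'})^*=\dot F_\infty^{-a,q}$ (see \cite{FJ2}), realized by the pairing that restricts to the canonical one on $\S_\infty\times\S_\infty$, the operator $A$ maps $\dot F_\infty^{t,q}=(\dot F_1^{-t,q'})^*$ boundedly into $\dot F_\infty^{t-\nu,q}=(\dot F_1^{\nu-t,q'})^*$. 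Finally one identifies $A$ with the transpose $T^*$: for $\varphi,g\in\S_\infty$ one has $\<A\varphi,g\>=\<\varphi,Tg\>=\<T^*\varphi,g\>$ by the definition of $T^*$, so $A\varphi$ and $T^*\varphi$ agree in $\S_\infty'$, hence modulo polynomials, i.e.\ as elements of $\dot F_\infty^{t-\nu,q}$. Thus $T^*$, interpreted on $\dot F_\infty^{t,q}$ as the adjoint $A$, satisfies the desired bound.

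For the Besov estimate one runs the same scheme after the elementary observation that $\dot F_1^{s,1}=\dot B_1^{s,1}$ with equality of norms (Fubini in the Littlewood--Paley square function). Taking $p=q=1$ and $w\equiv1$ in Theorem~\ref{t:TriebelLizorkinBounds} yields $T:\dot F_1^{\nu-t,1}\to\dot F_1^{-t,1}$ bounded, hence $T:\dot B_1^{\nu-t,1}\to\dot B_1^{-t,1}$ bounded; dualizing by $(\dot B_1^{s,1})^*=\dot B_\infty^{-s,\infty}$ and performing the same consistency check on $\S_\infty$ shows that $T^*$ maps $\dot B_\infty^{t,\infty}$ boundedly into $\dot B_\infty^{t-\nu,\infty}$, for all $\nu<t<\nu+\widetilde L+\gamma$.

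The one genuinely delicate step is the last one: since $\S_\infty$ is not dense in $\dot F_\infty^{t,q}$ or $\dot B_\infty^{t,\infty}$, the operator $T^*$ cannot be produced on these spaces by continuous extension from Schwartz functions, and must instead be defined as the Banach adjoint of $T$ acting on the separable predual. The substance of the proof is therefore the verification that this adjoint agrees with the given transpose on $\S_\infty$, hence modulo polynomials, which is exactly where the precise form of the Frazier--Jawerth and homogeneous Besov duality theorems (together with the fact that their pairings extend the canonical one) is used. Everything else is a routine concatenation of Theorem~\ref{t:TriebelLizorkinBounds} with standard duality and the coincidence $\dot F_1^{s,1}=\dot B_1^{s,1}$.
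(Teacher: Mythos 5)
Your argument for the $\dot F_\infty^{t,q}$ half is essentially the paper's: apply Theorem \ref{t:TriebelLizorkinBounds} at the endpoint $p=1$ and dualize via Frazier--Jawerth $(\dot F_1^{a,q'})^*=\dot F_\infty^{-a,q}$ (you are in fact more careful than the paper about the $q$ versus $q'$ bookkeeping, and about the fact that $T^*$ on $\dot F_\infty^{t,q}$ must be realized as the Banach adjoint of $T$ on the separable predual and then matched with the transpose on $\S_\infty$ — a point the paper leaves implicit). For the Besov half your route is genuinely different. The paper obtains $T^*:\dot B_\infty^{t,\infty}\to\dot B_\infty^{t-\nu,\infty}$ by invoking Theorem \ref{t:TriebelLizorkinBounds} in the sub-$L^1$ range $\frac{n}{n+\nu+\widetilde L+\gamma-s}<p<1$ with $q=2$, choosing $s+n(1/p-1)=t$, and then applying Jawerth's duality $(\dot F_p^{s,2})^*=\dot B_\infty^{-s+n(1/p-1),\infty}$; this is consistent with the geometric ``project along slope $1/n$ onto $p=1$, then reflect'' picture the authors develop in Section \ref{Sect5}. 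You instead stay at $p=q=1$, use the elementary identity $\dot F_1^{s,1}=\dot B_1^{s,1}$ and the classical Besov duality $(\dot B_1^{s,1})^*=\dot B_\infty^{-s,\infty}$. Both are valid: your hypotheses for Theorem \ref{t:TriebelLizorkinBounds} check out ($\min(1,p)=1\leq q=1$, $1/\lambda<1$ since $t<\nu+\widetilde L+\gamma$, and $w\equiv1\in A_\lambda$), and your route is arguably more economical since it avoids the $p<1$ theory and the Jawerth duality theorem entirely, needing only the $p=1$ endpoint of the boundedness theorem. What the paper's route buys in exchange is that it exercises the same mechanism used elsewhere to explain why all the spaces $\dot F_p^{s,2}$ along a line of slope $1/n$ in the $(t,1/p)$ parameter plane share the single dual $\dot B_\infty^{\cdot,\infty}$, so the Besov conclusion is exhibited as the common dual statement of a whole family of $p<1$ bounds rather than of one endpoint bound.
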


\begin{proof}
For $0<t<\nu+\widetilde L+\gamma$ and $1<q<\infty$, Theorem \ref{t:TriebelLizorkinBounds} implies that $T$ is bounded from $\dot F_1^{\nu-t,q}$ into $\dot F_1^{-t,q}$.  Then by duality (see for example the Frazier and Jawerth article \cite[Theorem 5.13]{FJ2}), it follows that $T^*$ is bounded from $\dot F_\infty^{t,q}$ into $\dot F_\infty^{t-\nu,q}$ for $\nu<t<\nu+\widetilde L+\gamma$ and $1<q<\infty$.  For $\nu\leq t<\nu+\widetilde L+\gamma$, choose $\nu<s<L+\gamma$ and $\frac{n}{n+\nu+\widetilde L+\gamma-s}<p<1$ such that $s+n(1/p-1)=t$.  Then it follows that $T$ is bounded from $\dot F_p^{\nu-s,2}$ into $\dot F_p^{-s,2}$.  So by duality, it follows that $T^*$ is bounded from $\dot B_\infty^{s+n(1/p-1),\infty}$ into $\dot B_\infty^{s-\nu+n(1/p-1),\infty}$.  That is, $T^*$ is bounded from $\dot B_\infty^{t,\infty}$ into $\dot B_\infty^{t-\nu,\infty}$.  Here we use the duality result of Jawerth \cite[Theorem 4.2]{Jaw}.
\end{proof}

\section{Necessity of Vanishing Moment Conditions}\label{Sect6}

In this section, we establish the necessity of the $T^*(x^\alpha)=0$ condition for many boundedness results.  In fact, this provides a type of $T1$ theorem that characterizes necessary and sufficient conditions for $T$ (or $T^*$) to be bounded based on Weak Boundedness Properties and cancellation conditions.  It is interesting to note that $T^*(x^\alpha)=0$ is necessary and sufficient cancellation for many of these results, while we need not require anything on $T(x^\alpha)$.  Some of these implications come from Proposition \ref{p:extrapolation} and Lemma \ref{l:moments}, both of which are interesting in their own right.

\begin{proposition}\label{p:extrapolation}
Let $T$ be an operator, $s,t\in\R$, $0<p_0,q<\infty$, and $\lambda\geq1/p_0$.  Assume that $T$ is bounded from $\dot F_{p,w}^{s,q}$ into $\dot F_{p,w}^{t,q}$, and there is an increasing function $N:\R\rightarrow[1,\infty)$ such that
\begin{align*}
||Tf||_{\dot F_{p_0,w}^{t,q}}\leq N([w]_{A_{\lambda p_0}})||f||_{\dot F_{p_0,w}^{s,q}}
\end{align*}
for all $w\in A_{\lambda p_0}$.  Then $T$ is bounded from $\dot F_{p,w}^{s,q}$ into $\dot F_{p,w}^{t,q}$ for all $1/\lambda<p<\infty$ and $w\in A_{\lambda p}$.
\end{proposition}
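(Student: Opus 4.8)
The plan is to strip off the definitions of the weighted Triebel--Lizorkin norms, reducing the statement to a one-weight $L^p$ inequality between two square functions, and then to apply the Rubio de Francia extrapolation theorem in its rescaled form (limited-range extrapolation). For $f\in\S_\infty$ set
\[
G_f(x)=\Big(\sum_{\ell\in\Z}\big(2^{t\ell}\abs{\psi_\ell*Tf(x)}\big)^q\Big)^{1/q},\qquad
H_f(x)=\Big(\sum_{k\in\Z}\big(2^{sk}\abs{\psi_k*f(x)}\big)^q\Big)^{1/q}.
\]
By the definition of these spaces, $\|Tf\|_{\dot F_{p,w}^{t,q}}=\|G_f\|_{L^p_w}$ and $\|f\|_{\dot F_{p,w}^{s,q}}=\|H_f\|_{L^p_w}$, so the quantitative hypothesis is precisely the statement that $\|G_f\|_{L^{p_0}_w}\le N([w]_{A_{\lambda p_0}})\|H_f\|_{L^{p_0}_w}$ for every $w\in A_{\lambda p_0}$ and every $f\in\S_\infty$, while the qualitative boundedness hypothesis ensures that for such $w$ both sides are finite (note $\S_\infty\subset\dot F_{p_0,w}^{s,q}$). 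What we want is the same inequality with $p_0$ replaced by an arbitrary $p\in(1/\lambda,\infty)$ and $A_{\lambda p_0}$ replaced by $A_{\lambda p}$; combined with density of $\S_\infty$ this gives the conclusion.

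Since $\lambda\ge1/p_0$ we have $\lambda p_0\ge1$. Using the identity $\|F^{1/\lambda}\|_{L^{\lambda\rho}_w}=\|F\|_{L^\rho_w}^{1/\lambda}$, valid for every nonnegative measurable $F$, every $\rho>0$, and every weight $w$, the hypothesis is equivalent to
\[
\|G_f^{1/\lambda}\|_{L^{\lambda p_0}_w}\le N([w]_{A_{\lambda p_0}})^{1/\lambda}\,\|H_f^{1/\lambda}\|_{L^{\lambda p_0}_w}\qquad(w\in A_{\lambda p_0}),
\]
which, as $\lambda p_0\ge1$ and the constant depends increasingly on $[w]_{A_{\lambda p_0}}$, is exactly the hypothesis of the classical Rubio de Francia extrapolation theorem applied to the family of pairs $\{(G_f^{1/\lambda},H_f^{1/\lambda}):f\in\S_\infty\}$ at base exponent $\lambda p_0$. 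The theorem then yields, for each $1<\rho<\infty$, a constant $C=C(\rho)$ independent of $f$ such that $\|G_f^{1/\lambda}\|_{L^\rho_w}\le C\|H_f^{1/\lambda}\|_{L^\rho_w}$ for all $w\in A_\rho$ and all $f\in\S_\infty$. Taking $\rho=\lambda p$, so that $1<\rho<\infty$ is equivalent to $1/\lambda<p<\infty$ and $A_\rho=A_{\lambda p}$, and undoing the rescaling gives $\|G_f\|_{L^p_w}\le C^\lambda\|H_f\|_{L^p_w}$, i.e. $\|Tf\|_{\dot F_{p,w}^{t,q}}\lesssim\|f\|_{\dot F_{p,w}^{s,q}}$, for all $f\in\S_\infty$, all $1/\lambda<p<\infty$, and all $w\in A_{\lambda p}$. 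As $\S_\infty$ is dense in $\dot F_{p,w}^{s,q}$ (see \cite{Bui} for the weighted and \cite{FJ2} for the unweighted case), $T$ extends to a bounded operator from $\dot F_{p,w}^{s,q}$ into $\dot F_{p,w}^{t,q}$.

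The one delicate point, and the step I expect to be the main obstacle, is the legitimacy of applying extrapolation to a family of pairs. The Rubio de Francia iteration algorithm underlying the theorem manufactures, at each target exponent, an auxiliary weight $W\in A_{\lambda p_0}$ with $w\le CW$ and $[W]_{A_{\lambda p_0}}$ controlled by $[w]$, and its duality step requires $\|G_f^{1/\lambda}\|_{L^{\lambda p_0}_W}<\infty$. This is exactly the role of the a priori hypothesis that $T$ is bounded from $\dot F_{p_0,w}^{s,q}$ into $\dot F_{p_0,w}^{t,q}$ for every $w\in A_{\lambda p_0}$: applied with $w=W$ and $f\in\S_\infty\subset\dot F_{p_0,W}^{s,q}$ it forces $G_f\in L^{p_0}_W$, hence $G_f^{1/\lambda}\in L^{\lambda p_0}_W$, and the algorithm runs verbatim. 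Alternatively, one can carry out the Rubio de Francia construction by hand at each fixed $p$ and $w\in A_{\lambda p}$, building $W$ from $w$ and chaining in the estimate at $p_0$; the bookkeeping is identical, and invoking the extrapolation theorem merely packages it. Everything else is unwinding definitions.
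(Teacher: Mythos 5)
Your proof is correct and follows essentially the same route as the paper: rescale the two square functions by the power $1/\lambda$, apply classical Rubio de Francia extrapolation to the resulting family of pairs at base exponent $\lambda p_0\geq 1$, undo the rescaling, and conclude by density of $\S_\infty$. Your additional remark on why the a priori boundedness hypothesis guarantees the finiteness needed in the extrapolation algorithm is a point the paper leaves implicit, and it is handled correctly.
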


\begin{proof}
Define
\begin{align*}
&(F,G)=\(\(\sum_{k\in\Z}(2^{sk}|Q_kf|)^q\)^\frac{1}{q\lambda},\(\sum_{k\in\Z}(2^{tk}|Q_kTf|)^q\)^\frac{1}{q\lambda}\)
\end{align*}
For all $w\in A_{\lambda p_0}$
\begin{align*}
||F||_{L^{\lambda p_0}_w}=||Tf||_{\dot F_{p_0,w}^{t,q}}^{1/\lambda}\leq N([w]_{A_{\lambda p_0}})^{1/\lambda}||f||_{\dot F_{p_0,w}^{s,q}}^{1/\lambda}=N([w]_{A_{\lambda p_0}})^{1/\lambda}||G||_{L^{\lambda p_0}_w}
\end{align*}
We apply extrapolation to the pairs of functions $(F,G)$ indexed by $f\in\S_\infty$.  Note that $1\leq\lambda p_0<\infty$.  Then, by extrapolation, it follows that
\begin{align*}
||G||_{L^r_w}\leq K(w)^{1/\lambda}||F||_{L^r_w}
\end{align*}
for all $1<r<\infty$ and $w\in A_r$, where $K(w)$ is specified in \cite{Duo}.  Therefore
\begin{align*}
||Tf||_{\dot F_{r/\lambda,w}^{t,q}}=||G||_{L^r_w}^{\lambda}\leq K(w)||F||_{L^r_w}^\lambda=K(w) ||f||_{\dot F_{r/\lambda,w}^{s,q}}
\end{align*}
for all $1<r<\infty$ and $w\in A_r$.  Now we simply shift notation to $p=r/\lambda$, and it follows that
\begin{align*}
||Tf||_{\dot F_{p,w}^{t,q}}\leq K(w)||f||_{\dot F_{p,w}^{t,q}}
\end{align*}
for all $1/\lambda<p<\infty$, $w\in A_{\lambda p}$, and $f\in\S_\infty$.  By density, $T$ is bounded from $\dot F_{p,w}^{t,q}$ into $\dot F_{p,w}^{s,q}$ for the same range of indices.
\end{proof}

\begin{remark}
Though Proposition \ref{p:extrapolation} is a relatively trivial application of Rubio de Francia's extrapolation, there are some interesting subtleties that can be observed in this result.  It demonstrates a way to ``trade'' the seemingly unnatural weighted estimates on $L^p_w$ for $w\in A_r$ when $r>p$ for the ability to move the index $p$ below $1$.  In particular, this provides a way to avoid a typical difficulty that arrises in Hardy space theory for indices smaller than $1$.  Suppose you'd like to prove that a given operator $T$ is bounded on $H^{p_0}$ for some $1/2<p_0<1$.  For such $p_0$, the duality theory of $H^{p_0}$ can be cumbersome.  However, by Proposition \ref{p:extrapolation} it is sufficient to prove that $T$ is bounded on $H^2_w= \dot F_{2,w}^{0,2}$ for all $w\in A_{2p_0}$ (note that $H_w^2\neq L^2_w$ for all such $w$ since $p_0>1/2$).  However, $H^2_w$ may be easier to work with since, for example, it is a Banach space rather than only a quasi-Banach space like $H^{p_0}$.  Being able to ``bump up'' the index from $p_0$ to $2$ may also make it possible to use duality arguments that may not be viable for quasi-Banach spaces.
\end{remark}

\begin{lemma}\label{l:moments}
Let $L\geq0$ be an integer and $\gamma>0$.  If $f\in \dot B_p^{-t,\infty}\cap L^1(1+|x|^{L+\gamma})$ for all $0<t<L+\gamma$ and $1< p<\infty$, then
\begin{align*}
\int_{\R^n}f(x)x^\alpha dx=0
\end{align*}
for all $|\alpha|\leq L$.

Consequently, for any $0<q<\infty$, the same conclusion holds if $f\in\dot F_p^{-t,q}\cap L^1(1+|x|^{L+\gamma})$ or $f\in\dot B_p^{-t,q}\cap L^1(1+|x|^{L+\gamma})$ for all $0<t<L+\gamma$ and $1< p<\infty$.
\end{lemma}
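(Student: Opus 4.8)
The plan is to argue by contradiction, exploiting the tension between the strong low-frequency decay forced by $f\in\dot B_p^{-t,\infty}$ with $t$ near $L+\gamma$ and the slow decay that a nonvanishing moment of order at most $L$ would impose on the pieces $\psi_k*f$ as $k\to-\infty$; the freedom to take $p$ close to $1$ is what will make the two incompatible. So suppose $m_\alpha:=\int_{\R^n}f(y)y^\alpha\,dy\neq0$ for some $|\alpha|\le L$ (all such integrals converge absolutely since $f\in L^1(1+|x|^{L+\gamma})$), and let $d\le L$ be the smallest order at which some moment is nonzero. For the admissible $\psi$ fixed in Section~\ref{Sect2}, with $\psi_k(x)=2^{kn}\psi(2^kx)$, Taylor-expanding $z\mapsto\psi_k(x-z)$ to order $L$ about $z=0$ and pairing against $f$ splits $\psi_k*f=\rho_k+e_k$, where
\begin{align*}
\rho_k=\sum_{|\alpha|\le L}\frac{(-1)^{|\alpha|}m_\alpha}{\alpha!}\,D^\alpha\psi_k,\qquad e_k(x)=\int_{\R^n}R_k(x,y)f(y)\,dy,
\end{align*}
and $R_k(x,y)=\psi_k(x-y)-\sum_{|\alpha|\le L}\frac{(-y)^\alpha}{\alpha!}D^\alpha\psi_k(x)$ is the order-$L$ Taylor remainder.

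First I would pin down the size of $\rho_k$ by rescaling. Setting $u=2^kx$ and using that the moments of order below $d$ vanish,
\begin{align*}
2^{-k(n+d)}\rho_k(2^{-k}u)=\sum_{d\le|\alpha|\le L}\frac{(-1)^{|\alpha|}m_\alpha}{\alpha!}\,2^{k(|\alpha|-d)}D^\alpha\psi(u),
\end{align*}
which converges in $L^p$, as $k\to-\infty$, to the fixed function $G:=\sum_{|\alpha|=d}\frac{(-1)^{d}m_\alpha}{\alpha!}D^\alpha\psi$. Moreover $\|G\|_{L^p}>0$: on the Fourier side $\widehat G(\xi)=(-2\pi i)^{d}\bigl(\sum_{|\alpha|=d}\tfrac{m_\alpha}{\alpha!}\xi^\alpha\bigr)\widehat\psi(\xi)$, the polynomial factor is not identically zero since some $m_\alpha\neq0$, and $\widehat\psi$ is positive on an open set, so $\widehat G\not\equiv0$. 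Consequently $\|\rho_k\|_{L^p}=2^{k(n+d-n/p)}\,\|2^{-k(n+d)}\rho_k(2^{-k}\cdot)\|_{L^p}\gtrsim 2^{k(n+d-n/p)}$ for all sufficiently negative $k$.

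Next I would bound the remainder. Rescaling $R_k$ to $\psi$ gives $\|R_k(\cdot,y)\|_{L^p_x}=2^{k(n-n/p)}\|\widetilde R(\cdot,2^ky)\|_{L^p}$, where $\widetilde R(\cdot,v)$ is the order-$L$ Taylor remainder of $\psi$; the standard two-sided bound $\|\widetilde R(\cdot,v)\|_{L^p}\lesssim\min(|v|^{L},|v|^{L+1})\le|v|^{L+\gamma}$ (integral form of the remainder for $|v|\le1$, triangle inequality for $|v|\ge1$) then yields
\begin{align*}
\|e_k\|_{L^p}\le\int_{\R^n}\|R_k(\cdot,y)\|_{L^p}\,|f(y)|\,dy\lesssim 2^{k(n+L+\gamma-n/p)}\int_{\R^n}|y|^{L+\gamma}|f(y)|\,dy\lesssim 2^{k(n+L+\gamma-n/p)}.
\end{align*}
Now choose $p\in(1,\infty)$ close enough to $1$ that $n(1-1/p)<L+\gamma-d$ (possible since $d\le L<L+\gamma$), and then $t$ with $n(1-1/p)+d<t<L+\gamma$; by hypothesis $f\in\dot B_p^{-t,\infty}$, so $\|\psi_k*f\|_{L^p}\le 2^{tk}\|f\|_{\dot B_p^{-t,\infty}}$. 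Combining,
\begin{align*}
c\,2^{k(n+d-n/p)}\le\|\rho_k\|_{L^p}\le\|\psi_k*f\|_{L^p}+\|e_k\|_{L^p}\lesssim 2^{tk}+2^{k(n+L+\gamma-n/p)},
\end{align*}
and since $t>n+d-n/p$ and $L+\gamma>d$, the right side is $o\bigl(2^{k(n+d-n/p)}\bigr)$ as $k\to-\infty$; dividing by $2^{k(n+d-n/p)}$ and letting $k\to-\infty$ forces $c\le0$, a contradiction. Hence $m_\alpha=0$ for all $|\alpha|\le L$. For the ``consequently'' clause it suffices to note the trivial embeddings $\dot F_p^{-t,q}\hookrightarrow\dot B_p^{-t,\infty}$ and $\dot B_p^{-t,q}\hookrightarrow\dot B_p^{-t,\infty}$ (each from $\sup_k 2^{-tk}\|\psi_k*f\|_{L^p}\le\|f\|$), which reduce those hypotheses to the case just proved.

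The step I expect to be the main obstacle is the exponent calibration in the penultimate display: the lower bound $2^{k(n+d-n/p)}$ for $\|\rho_k\|_{L^p}$ must dominate \emph{both} the Besov-driven bound $2^{tk}$ \emph{and} the remainder bound $2^{k(n+L+\gamma-n/p)}$ as $k\to-\infty$, and securing $n+d-n/p<t<L+\gamma$ in all dimensions is possible only by pushing $p$ toward $1$ --- which is precisely why the hypothesis is quantified over all $p\in(1,\infty)$. The remaining ingredients (absolute convergence of the moment integrals, the $L^p$-convergence of the rescaled $\rho_k$, and the uniform Taylor-remainder estimate for a Schwartz function) are routine.
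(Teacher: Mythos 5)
Your proof is correct and follows essentially the same route as the paper's: Taylor-expand $\psi_k$ so that $\psi_k*f$ splits into a moment term and a remainder controlled by $\|f\|_{L^1(|x|^{L+\gamma})}$, then play the growth rate $2^{k(n/p'+d)}$ of the moment term against the Besov bound $2^{tk}$ as $k\to-\infty$, with $p$ near $1$ and $t$ near $L+\gamma$. The only difference is organizational: the paper runs an induction on the moment order, whereas you take the minimal nonvanishing order $d$ and argue by contradiction via the rescaling limit $G$.
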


\begin{proof}
We proceed by induction.  First assume that $L=0$.  Let $0<t<\gamma$ and $1<p<\infty$ be small enough so that $n/p'<t<n/p'+\gamma$.  Assume $f\in\dot B_p^{-t,\infty}\cap L^1(1+|x|^\gamma)$.  Then for any $\psi\in\S_\infty$, we have
\begin{align*}
||f||_{\dot B_p^{-t,\infty}}
&\geq \sup_{k<0}2^{(n/p'-t)k}||\psi||_{L^p}\left|\int_{\R^n}f(y)dy\right|\\
&\hspace{2cm}- \sup_{k<0}2^{-tk}\[\int_{\R^n}\left|\int_{\R^n}(\psi_k(x-y)-\psi_k(x))f(y)dy\right|^pdx\]^{1/p}.
\end{align*}
The second term above is bounded since we have
\begin{align*}
&\sup_{k<0}2^{(\gamma-t)k}\[\int_{\R^n} \(\int_{\R^n} |y|^\gamma\(\Phi_k^{N}(x-y)+\Phi_k^N(x)\)|f(y)|dy\)^pdx\]^{1/p}\\
&\hspace{1cm}\less \sup_{k<0}2^{(\gamma-t)k}\[\int_{\R^n}  \Phi_k^N*(|y|^\gamma|f|)(x)^pdx\]^{1/p}+||f||_{L^1(|y|^\gamma)}\sup_{k<0}2^{(\gamma-t)k}\[\int_{\R^n} \Phi_k^N(x)^pdx\]^{1/p}\\
&\hspace{1cm}\less||f||_{L^1(|x|^\gamma)}.
\end{align*}
Note that we chose $t$ so that $n/p'<t<n/p'+\gamma$, which implies $2^{(\gamma-t)k}||\Phi_k^N||_{L^p}$ is bounded uniformly in $k$ (as long as $N>n/p$) and that $2^{(n/p'-t)k}$ is unbounded for $k<0$.  Then it follows that $f$ must have integral zero.  Now assume that Lemma \ref{l:moments} holds for all $M\leq L-1$.  Let $0<t<L+\gamma$ and $1<p<\infty$ be small enough so that $n/p'+L<t<n/p'+L+\gamma$.  Assume $f\in\dot B_p^{-t,\infty}\cap L^1(1+|x|^{L+\gamma})$.  Then for any $\psi\in\S_\infty$
\begin{align*}
||f||_{\dot B_p^{-t,\infty}}&\geq \sup_{k<0}2^{-tk}\[\int_{\R^n}\left|\int_{\R^n}J_x^L[\psi_k](y)f(y)dy\right|^pdx\]^{1/p}\\
&\hspace{1cm}- \sup_{k<0}2^{-tk}\[\int_{\R^n}\left|\int_{\R^n}(\psi_k(x-y)-J_x^L[\psi_k](y))f(y)dy\right|^pdx\]^{1/p}.
\end{align*}
Again the second term is bounded above since
\begin{align*}
&\sup_{k<0}2^{-tk}\[\int_{\R^n}\left|\int_{\R^n}(\psi_k(x-y)-J_x^L[\psi_k](y))f(y)dy\right|^pdx\]^{1/p}\\
&\hspace{2cm}\less\sup_{k<0}2^{-tk}\[\int_{\R^n}\(\int_{\R^n} (2^{k}|y|)^{L+\gamma}\(\Phi_k^{N}(x-y)+\Phi_k^N(x)\)|f(y)|dy\)^pdx\]^{1/p}\\
&\hspace{2cm}\less\sup_{k<0}2^{(L+\gamma+n/p'-t)k}||f||_{L^1(|x|^{L+\gamma})}\leq ||f||_{L^1(|x|^{L+\gamma})}.
\end{align*}
We also have, by the inductive hypothesis, that
\begin{align*}
&2^{-tk}\[\int_{\R^n}\left|\int_{\R^n}J_x^L[\psi_k](y)f(y)dy\right|^pdx\]^\frac{1}{p}=2^{(L+n/p'-t)k}\[\int_{\R^n}\left|\sum_{|\alpha|=L} \frac{D^\alpha\psi(x)}{\alpha!}\int_{\R^n}f(y)y^\alpha dy\right|^pdx\]^\frac{1}{p}.
\end{align*}
Since $2^{(L+n/p'-t)k}$ is unbounded for $k<0$, it follows that
\begin{align*}
\int_{\R^n}\left|\sum_{|\alpha|=L} \frac{D^\alpha\psi(x)}{\alpha!}\int_{\R^n}f(y)y^\alpha dy\right|^pdx=0
\end{align*}
for all $\psi\in\S_\infty$, and hence that
\begin{align*}
\int_{\R^n}f(y)y^\alpha dy=0
\end{align*}
for all $|\alpha|=L$.  By induction, this completes the proof when $f\in \dot B_p^{-t,\infty}$.  Note that the remaining properties trivially follow since $\dot F_p^{-t,q}\subset\dot B_p^{-t,\infty}$ and $\dot B_p^{-t,q}\subset\dot B_p^{-t,\infty}$ for all $0<q<\infty$, $1<p<\infty$, and $t\in\R$.
\end{proof}

\begin{remark}
It is known that $H^p$ quantifies vanishing moment properties for its members (see e.g. \cite{GH} by Grafakos and He on weak Hardy spaces), and by Lemma \ref{l:moments} we have vanishing moment properties for negative smoothness index Triebel-Lizorkin and Besov spaces.  In particular, Lemma \ref{l:moments} should be interpreted as follows.  The spaces $\dot F_p^{-t,q}$ for $L\leq t+n(1/p-1)<L+1$ quantify vanishing moment properties for order $|\alpha|=L$ for its members, as described in Lemma \ref{l:moments}.
\end{remark}

\begin{theorem}\label{t:T1sufficient}
Let $\nu\in\R$, $L$ be an integer with $L\geq|\nu|$, $M\geq\max(L,L-\nu)$, $(L-\nu)_*<\gamma\leq1$, and $T\in CZO_\nu(M+\gamma)$.  If any one of the conditions hold, then $T^*(x^\alpha)=0$ for all $|\alpha|\leq L$.
\begin{itemize}
\item[(1)] For every $1<p<\infty$ and $\nu<t<\nu+\widetilde L+\gamma$, there exists $0<q\leq\infty$ such that $T$ is bounded from $\dot F_p^{\nu-t,q}$ into $\dot F_p^{-t,q}$
\item[(2)] For every $1<p<\infty$ and $\nu<t<\nu+\widetilde L+\gamma$, there exists $1<q<\infty$ such that $T^*$ is bounded from $\dot F_p^{t,q}$ into $\dot F_p^{t-\nu,q}$
\item[(3)] For every $1<p<\infty$ and $\nu<t<\nu+\widetilde L+\gamma$, there exists $0<q\leq\infty$ such that $T$ is bounded from $\dot B_p^{\nu-t,q}$ into $\dot B_p^{-t,q}$
\item[(4)] For every $1<p<\infty$ and $\nu<t<\nu+\widetilde L+\gamma$, there exists $1<q<\infty$ such that $T^*$ is bounded from $\dot B_p^{t,q}$ into $\dot B_p^{t-\nu,q}$
\item[(5)] For each $\nu<t<\nu+\widetilde L+\gamma$, there exist $0<q\leq\infty$ and $1/\lambda<p<\infty$ such that $T$ is bounded from $\dot F_{p,w}^{\nu-t,q}$ into $\dot F_{p,w}^{-t,q}$ and there is an increasing function $N:[1,\infty)\rightarrow(0,\infty)$ that does not depend on $w$ such that
\begin{align*}
||Tf||_{\dot F_{p,w}^{-t,q}}\leq N([w]_{A_{\lambda p}})||f||_{\dot F_{p,w}^{\nu-t,q}}
\end{align*}
for all $w\in A_{\lambda p}$, where $\lambda=\frac{n+\nu+\widetilde L+\gamma-t}{n}$
\item[(6)] For every $\nu<t<\nu+\widetilde L+\gamma$, $T^*$ is bounded from $\dot B_\infty^{t,\infty}$ into $\dot B_\infty^{t-\nu,\infty}$.
\item[(7)] For every $\nu<t<\nu+\widetilde L+\gamma$, there exists a $1<q<\infty$ such that $T^*$ is bounded from $\dot F_\infty^{t,q}$ into $\dot F_\infty^{t-\nu,q}$
\item[(8)] For every $\nu<t<\nu+\widetilde L+\gamma$, there exists a $1<q<\infty$ such that $T$ is bounded from $\dot F_1^{\nu-t,q}$ into $\dot F_1^{-t,q}$
\item[(9)]  For each $\nu<s<\nu+\lfloor L-\nu\rfloor+\gamma$ and $0<t<\lfloor L-\nu\rfloor+\gamma$ there exists $T_{s,t}\in CZO_{\nu+t-s}$ such that $|\nabla|^{-s}T|\nabla|^tf-T_{s,t}f$ is a polynomial for all $f\in\S_\infty$.
\end{itemize}
\end{theorem}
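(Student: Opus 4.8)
\emph{Overview.} The plan is to prove a single ``core implication'' and then funnel all nine conditions into it. The core implication is: if the $SIO_\nu$-operator $T$ (already known to lie in $CZO_\nu(M+\gamma)$) satisfies $\|T\psi\|_{\dot F_p^{-t,q}}\lesssim\|\psi\|_{\dot F_p^{\nu-t,q}}$ for all $1<p<\infty$, some $0<q\le\infty$, and all $\nu<t<\nu+\widetilde L+\gamma$, then $T^*(x^\alpha)=0$ for $|\alpha|\le L$. Conditions (1), (3) are essentially this statement (with a Besov variant for (3)); (2), (4), (6), (7) are converted to it by duality; (5) is converted to it by extrapolation; and (9) is converted to it through the composition identity. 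I expect the core implication -- specifically the step that manufactures negative-smoothness bounds with smoothness exponent all the way down to $0$ -- to be the crux.

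\emph{The core implication.} Fix $\psi\in\mathcal D_P$ with $P$ large (at least $P\ge M+|\nu|$, so $P\ge L$). Set $\Gamma:=\nu+\widetilde L+\gamma-L$; this is strictly positive, which is exactly the content of the standing hypothesis $(L-\nu)_*<\gamma$. I would record three facts about $T\psi$. First, $\psi$ lies in $\dot F_p^{\nu-t,q}$ for the relevant indices (it is smooth, compactly supported, with many vanishing moments), so the assumed estimate gives $T\psi\in\dot F_p^{-t,q}$ for all $1<p<\infty$ and $\nu<t<\nu+\widetilde L+\gamma$. Second, because $T\in CZO_\nu(M+\gamma)$ and $\psi\in\dot W^{\nu,p}$, we have $T\psi\in L^p$ for every $1<p<\infty$; in particular $T\psi$ is an honest function. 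Third, from the kernel representation of $T\in SIO_\nu$, the vanishing moments of $\psi$, and a Taylor expansion of the kernel one obtains $|T\psi(x)|\lesssim|x|^{-(n+\nu+M+\gamma)}$ for $|x|$ large; combined with $T\psi\in L^p$ this gives $T\psi\in L^1\bigl((1+|x|)^{L+\gamma_0}\bigr)$ for every $0<\gamma_0<\Gamma$ (here $L+\gamma_0<\nu+M+\gamma$ because $M\ge L-\nu$).

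\emph{The crux and the finish.} The delicate point is that when $\nu>0$ the interval $(\nu,\nu+\widetilde L+\gamma)$ of admissible smoothness exponents in the first fact does not reach $0$, so that fact alone cannot pin down the low-order moments via Lemma \ref{l:moments}. This is precisely where the $CZO_\nu$ hypothesis is indispensable: the negative-smoothness bound controls the low-frequency Littlewood--Paley pieces $\psi_k*T\psi$ for $k<0$, while the $L^p$ bound controls the high-frequency pieces, so combining them (equivalently, real-interpolating $\dot F_p^{-t,q}$ against $L^p=\dot F_p^{0,2}$) yields $T\psi\in\dot B_p^{-t,\infty}$ for \emph{all} $0<t<\nu+\widetilde L+\gamma=L+\Gamma$ and all $1<p<\infty$. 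Together with the weighted-$L^1$ bound, Lemma \ref{l:moments} -- applied with $L$ and any $\gamma_0\in(0,\Gamma)$ playing the roles of its $L$ and $\gamma$ -- now gives $\int_{\R^n}T\psi(x)\,x^\alpha\,dx=0$ for all $|\alpha|\le L$. Finally, since $\eta\in\mathcal D_{2P}$ and $P\ge L$ we have $x^\alpha\eta_R\in\mathcal D_P$, so by Definition \ref{d:Tx} and the transpose relation $\langle T^*(x^\alpha),\psi\rangle=\lim_{R\to\infty}\langle T\psi,x^\alpha\eta_R\rangle=\int_{\R^n}T\psi(x)\,x^\alpha\,dx=0$, the passage to the limit being justified by dominated convergence using the weighted-$L^1$ bound; hence $T^*(x^\alpha)=0$ in $\mathcal D_P'$.

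\emph{Feeding in the hypotheses.} Conditions (1) and (3) are the core implication directly; for (3) I would run the same argument using $\dot B_p^{-t,q}\hookrightarrow\dot B_p^{-t,\infty}$ and the elementary embedding $L^p\hookrightarrow\dot B_p^{0,\infty}$, then invoke the Besov form of Lemma \ref{l:moments}. Conditions (2) and (4) follow from (1) and (3) via the dualities $(\dot F_p^{s,q})^*=\dot F_{p'}^{-s,q'}$ and $(\dot B_p^{s,q})^*=\dot B_{p'}^{-s,q'}$ ($1<p,q<\infty$), the pre-adjoint of the bounded extension of $T^*$ being $T$. Condition (5) reduces to (1) by Proposition \ref{p:extrapolation} -- legitimate since $\lambda=\frac{n+\nu+\widetilde L+\gamma-t}{n}>1$ here -- followed by specialization to $w\equiv1$ for all $1<p<\infty$ (a routine adjustment covers $q=\infty$). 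Conditions (7) and (6) reduce, via $(\dot F_1^{s,q})^*=\dot F_\infty^{-s,q'}$ and $(\dot B_1^{s,1})^*=\dot B_\infty^{-s,\infty}$ respectively, to the boundedness of $T$ on $\dot F_1^{\nu-t,q}$ or $\dot B_1^{\nu-t,1}$; for these I would rerun the core argument at $p=1$, noting that the proof of Lemma \ref{l:moments} goes through for $p=1$ (only $n/p'<t$, i.e.\ $0<t$, is used at low frequency, and $\|\Phi_k^N\|_{L^1}$ is dilation invariant) and that $T\psi\in L^1$ already follows from the decay estimate. Lastly, for (9), $T_{s,t}\in CZO_{\nu+t-s}$ gives $T_{s,t}\colon\dot W^{\nu+t-s,p}\to L^p$, so writing $T=|\nabla|^sT_{s,t}|\nabla|^{-t}$ modulo polynomials on $\S_\infty$ yields $\|Tf\|_{\dot W^{-s,p}}=\|T_{s,t}(|\nabla|^{-t}f)\|_{L^p}\lesssim\|f\|_{\dot W^{\nu-s,p}}$, which is condition (1) with $q=2$ as $s$ ranges over $(\nu,\nu+\widetilde L+\gamma)$. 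The ``modulo polynomials'' conventions must be tracked throughout, but since the conclusion $T^*(x^\alpha)=0$ is itself modulo polynomials this is harmless.
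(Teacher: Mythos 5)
Your proposal follows the paper's strategy closely for conditions (1)--(5) and (9): establish the pointwise decay $|T\psi(x)|\lesssim|x|^{-(n+\nu+M+\gamma)}$ from the kernel representation and the vanishing moments of $\psi$, deduce $T\psi\in L^1(1+|x|^{L+\gamma_0})$, combine this with the assumed negative-smoothness bounds and Lemma \ref{l:moments} to get $\int T\psi(x)x^\alpha dx=0$, and then funnel (2),(4) in by reflexive duality, (5) by Proposition \ref{p:extrapolation}, and (9) by unwinding the composition identity. You are in fact more explicit than the paper about one point it elides: when $\nu>0$ the hypothesis only controls smoothness exponents $t\in(\nu,\nu+\widetilde L+\gamma)$, and your observation that the $CZO_\nu$ bound $T\psi\in L^p$ supplies the high-frequency Littlewood--Paley control needed to push the $\dot B_p^{-t,\infty}$ membership down to all $0<t<\nu+\widetilde L+\gamma$ is exactly the right repair. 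Your treatment of (8) (rerunning the core argument at $p=1$ after checking that Lemma \ref{l:moments} survives there) also differs from the paper, which instead dualizes (8) to (7); both routes work.

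The one step that does not stand as written is your reduction of (6) and (7). The identities $(\dot B_1^{s,1})^*=\dot B_\infty^{-s,\infty}$ and $(\dot F_1^{s,q})^*=\dot F_\infty^{-s,q'}$ give the implication from boundedness of $T$ on the $1$-index space to boundedness of $T^*$ on the $\infty$-index space, which is the opposite of what you need: the $\infty$-index spaces are dual spaces and are not reflexive, so the adjoint of $T^*\colon\dot B_\infty^{t,\infty}\to\dot B_\infty^{t-\nu,\infty}$ a priori acts between the biduals of the $\dot B_1$ spaces, not the $\dot B_1$ spaces themselves. To make your route rigorous you would need a norming (Fatou-type) argument, e.g.\ recovering $2^{-tk}\|\psi_k*T\psi\|_{L^1}$ as $\sup_{\|h\|_{L^\infty}\le1}2^{-tk}|\langle\psi,T^*(\psi_k*h)\rangle|$ together with the almost-orthogonality bound $\|\psi_k*h\|_{\dot B_\infty^{t,\infty}}\lesssim 2^{tk}$; you assert the reduction but do not supply this. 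The paper sidesteps the issue entirely by testing directly: it pairs $T^*(x^\alpha\eta_R)$, which conditions (6)/(7) place in $\dot B_\infty^{t-\nu,\infty}$ with norm $\lesssim\|x^\alpha\eta_R\|_{\dot B_\infty^{t,\infty}}\lesssim R^{|\alpha|-t}$, against $\psi$ in the predual ($H^p$ with $n(1/p-1)=t-\nu$, resp.\ $\dot F_1^{\nu-t,q'}$), and lets $R\to\infty$ using $|\alpha|<t$. Either fix is fine, but as stated your duality arrow points the wrong way.
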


\begin{proof}
Assume that (1) holds, and let $\psi\in\mathcal D_{P}$, for $P\in\N_0$ sufficiently large, with $\supp(\psi)\subset B(0,R_0/4)$ for some $R_0>1$.  Note that $T\psi$ is locally integrable by the $T\in CZO_\nu$ assumption.  Also if $x\notin B(0,R_0)$, then it follows that
\begin{align*}
|T\psi(x)|&=\left|\int_{\R^{2n}}\( K(x,y)-J_0^M\[ K(x,\cdot)\](y)\)\psi(y)dy\right|\less\int_{\R^{2n}}\frac{|y|^{M+\gamma}}{|x|^{n+\nu+M+\gamma}}|\psi(y)|dy\less\frac{R_0^{L+\gamma-\nu}||\psi||_{L^1}}{|x|^{n+L+\gamma}}.
\end{align*}
Then it follows that $T\psi\in L^1(1+|x|^{L+\gamma'})$ for any $0<\gamma'<\gamma$ since
\begin{align*}
\int_{\R^n}|T\psi(x)|(1+|x|^{L+\gamma\,'})dx
&\less(1+R_0^{L+\gamma})\int_{|x|\leq R_0}|T\psi(x)|dx+R_0^{\gamma\,'-\nu}.
\end{align*}
This, in addition to (1), says that $T\psi\in \dot B_\infty^{-t,\infty}\cap L^1(1+|x|^{L+\gamma\,'})$ for all $0<t<L+\gamma\,'$ and $1<p<\infty$.  So by Lemma \ref{l:moments}, it follows that $T\psi$ has vanishing moments up to order $L$.  But this means exactly that $T^*(x^\alpha)=0$ for the same $\alpha$'s since
\begin{align*}
\<T^*x^\alpha,\psi\>&=\lim_{R\rightarrow\infty}\int_{\R^n}T\psi(x) \eta_R(x)x^\alpha dx=\int_{\R^n}T\psi(x)x^\alpha dx=0,
\end{align*}
where we use dominated convergence and that $T\psi\in L^1(1+|x|^{L+\gamma\,'})$ to handle the limit in $R$.  Therefore $T^*(x^\alpha)=0$ for all $|\alpha|\leq L$.

By exactly the same argument, it follows that condition (3) also implies $T^*(x^\alpha)=0$ for $|\alpha|\leq L$.  Furthermore, by duality (2) implies (1) and (4) implies (3).  Hence we have shown that any one of the conditions (1)--(4) implies $T^*(x^\alpha)=0$ for $|\alpha|\leq L$.

Assume that (5) holds.  Then by Proposition \ref{p:extrapolation} it follows that $T$ is bounded from $\dot F_{p,w}^{\nu-t,q}$ into $\dot F_{p,w}^{-t,q}$ for all $0<t<\nu+\widetilde L+\gamma$, $1/\lambda<p<\infty$, and $w\in A_{\lambda p}$, where $\lambda=\frac{n+\nu+\widetilde L+\gamma-t}{n}$.  In particular, (5) implies (1) which in turn implies that $T^*(x^\alpha)=0$ for $|\alpha|\leq L$.

Assume that (6) holds.  Let $\alpha\in\N_0^n$ with $|\alpha|\leq L$.  Note that $(L-\nu)_*<\gamma$ implies that $L<\nu+\widetilde L+\gamma$.  Then there is a $t\notin\Z$ such that $\max(\nu,|\alpha|)<t<\nu+\widetilde L+\gamma$.  Also let $0<p<1$ such that $n(1/p-1)=t-\nu$.  Then for $\psi\in\mathcal D_{2P}$, with $P$ sufficiently large, we have
\begin{align*}
|\<T^*(x^\alpha),\psi\>|
&\less\limsup_{R\rightarrow\infty}||\eta_Rx^\alpha||_{\dot B_\infty^{t,\infty}}||\psi||_{H^p}\less\limsup_{R\rightarrow\infty}R^{|\alpha|-t}||\psi||_{H^p}=0.
\end{align*}
Here, we simply note that when $t>0$ is a non-integer, $\dot B_\infty^{t,\infty}$ is the class of $t$-Lipschitz functions, and it easily follows that $||\phi(\cdot/R)||_{\dot B_\infty^{t,\infty}}\less R^{-t_*}$ for any $\phi\in C_0^\infty$, where again $t_*$ is the decimal part of $t$.  Therefore $T^*(x^\alpha)=0$ for $|\alpha|\leq L$.

Assume that (7) holds.  Note that $\dot F_\infty^{t-\nu,q}\subset \dot B_\infty^{t-\nu,\infty}$ and $||f||_{\dot B_\infty^{t-\nu,\infty}}\leq||f||_{\dot F_\infty^{t-\nu,q}}$ for any $\nu<t<\nu+\widetilde L+\gamma$ and $1<q<\infty$.  Then we argue as we did in the previous case.  For $|\alpha|\leq L$, let $\max(\nu,|\alpha|)<t<\nu+\widetilde L+\gamma$ and $\psi\in\mathcal D_{2P}$.  Then
\begin{align*}
|\<T^*(x^\alpha),\psi\>|
&\less\limsup_{R\rightarrow\infty}||\eta_Rx^\alpha||_{\dot F_\infty^{t-\nu,q}}||\psi||_{\dot F_1^{\nu-t,q'}}\leq\limsup_{R\rightarrow\infty}||\eta_Rx^\alpha||_{\dot B_\infty^{t-\nu,\infty}}||\psi||_{\dot F_1^{\nu-t,q'}}=0
\end{align*}
Therefore $T^*(x^\alpha)=0$ for $|\alpha|\leq L$.

By duality (8) implies (7) and by density (9) implies (1).  Hence both (8) and (9) also imply $T^*(x^\alpha)=0$ for $|\alpha|\leq L$.
\end{proof}

\section{Applications}\label{Sect7}

\subsection{Necessary and Sufficient Conditions for Classes of Singular Integral Operators}\label{Sect7.1}

In this section we collect and summarize the results in the preceding three sections to form several equivalent conditions for cancellation and boundedness of operators $T\in CZO_\nu$.  This result is essentially a combination of a few of the operator calculus results from Section \ref{Sect4}, the boundedness results from Section \ref{Sect5}, and the sufficiency for vanishing moments from Section \ref{Sect6}.  As a result we obtain the following $T1$ type necessary and sufficient condition for several boundedness results for $SIO_\nu$.

\begin{corollary}\label{c:CZvequiv}
Let $\nu\in\R$, $L\geq|\nu|$ be an integer, $(L-\nu)_*<\gamma\leq1$, $\widetilde L=\lfloor L-\nu\rfloor$, and $T\in CZO_\nu(L+\gamma)$.  Then the following are equivalent.
\begin{enumerate}
\item $T^*(x^\alpha)=0$ for all $|\alpha|\leq L$
\item For every $\nu<s<\nu+\widetilde L+\gamma$ and $0<t<\widetilde L+\gamma$, there exists $T_{s,t}\in CZO_{\nu+t-s}(\gamma\,')$ for $0<\gamma\,<\gamma$ such that $|\nabla|^{-s}T|\nabla|^tf-T_{s,t}f$ is a polynomial for all $f\in\S_\infty$
\item For all $\nu<t<\nu+\widetilde L+\gamma$, $1/\lambda<p<\infty$, and $\min(1,p)\leq q<\infty$, $T$ is bounded from $\dot F_{p,w}^{\nu-t,q}$ into $\dot F_{p,w}^{-t,q}$ and there is an increasing function $N:[1,\infty)\rightarrow(0,\infty)$ that does not depend on $w$ such that
\begin{align*}
||Tf||_{\dot F_{p,w}^{-t,q}}\leq N([w]_{A_{\lambda p}})||f||_{\dot F_{p,w}^{\nu-t,q}}
\end{align*}
for all $w\in A_{\lambda p}$, where $\lambda=\frac{n+\nu+\widetilde L+\gamma-t}{n}$.
\end{enumerate}

\end{corollary}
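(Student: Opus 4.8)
The statement consolidates the work of Sections \ref{Sect4}--\ref{Sect6}, so the plan is to close the two cycles $(1)\Rightarrow(2)\Rightarrow(1)$ and $(1)\Rightarrow(3)\Rightarrow(1)$, with the cancellation condition $(1)$ acting as the hub. The one ingredient not handed to us directly by the cited theorems is that $T\in CZO_\nu(L+\gamma)$ already forces $T$ to satisfy $WBP_\nu$, and this is where I would start. For the $T$-half of \eqref{WBPv} one tests the boundedness $\dot W^{\nu,p}\to L^p$ against a normalized bump: if $\psi\in\mathcal D_M$ and $\varphi\in C_0^\infty$ are supported in a ball $B$ of radius $r$ with the normalization in the definition of $WBP_\nu$, then $\|\psi\|_{\dot W^{\nu,p}}\less r^{-\nu}|B|^{1/p}$ (the vanishing moments of $\psi$ control the low frequencies when $\nu\le0$), so $|\<T\psi,\varphi\>|\le\|T\psi\|_{L^p}\|\varphi\|_{L^{p'}}\less r^{n-\nu}=|B|^{1-\nu/n}$. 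For the $T^*$-half one writes $\<T^*\psi,\varphi\>=\<T\varphi,\psi\>$ and runs the same estimate with $\varphi$ and $\psi$ interchanged, choosing $p$ large enough that $C_0^\infty\subset\dot W^{\nu,p}$ (automatic for $\nu\ge0$; requiring $p>n/(n-|\nu|)$ when $\nu<0$, which costs nothing since $WBP_\nu$ is a single scale-invariant inequality).

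Granting $WBP_\nu$, the implication $(1)\Rightarrow(2)$ is Theorem \ref{t:calculus}: its hypotheses are $(1)$ together with the standing assumptions of the corollary, and for each $\nu<s<\nu+\widetilde L+\gamma$ and $0<t<\widetilde L+\gamma$ it produces $T_{s,t}$ with $|\nabla|^{-s}T|\nabla|^tf-T_{s,t}f$ a polynomial on $\S_\infty$; since the proof of Theorem \ref{t:calculus} passes through Theorem \ref{t:1sidecalc}, it in fact yields $T_{s,t}\in CZO_{\nu+t-s}(\gamma\,')$ for every $0<\gamma\,'<\gamma$, which is precisely the regularity asked for in $(2)$. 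The implication $(1)\Rightarrow(3)$ is Theorem \ref{t:TriebelLizorkinBounds}: it gives the boundedness $\dot F_{p,w}^{\nu-t,q}\to\dot F_{p,w}^{-t,q}$ on the stated range $\nu<t<\nu+\widetilde L+\gamma$, $1/\lambda<p<\infty$, $\min(1,p)\le q<\infty$, $w\in A_{\lambda p}$ with $\lambda=\frac{n+\nu+\widetilde L+\gamma-t}{n}$, together with the increasing controlling function of the weight characteristic appearing there.

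For the two converses I would simply quote Theorem \ref{t:T1sufficient}. Condition $(2)$ of the corollary is formally stronger than hypothesis $(9)$ of Theorem \ref{t:T1sufficient} (it asserts the required $T_{s,t}\in CZO_{\nu+t-s}$ over the same parameter ranges), and $(9)$ already implies $T^*(x^\alpha)=0$ for $|\alpha|\le L$; hence $(2)\Rightarrow(1)$. Likewise, condition $(3)$ is exactly hypothesis $(5)$ of Theorem \ref{t:T1sufficient}, with the same $\lambda$ and the same quantitative dependence $N([w]_{A_{\lambda p}})$, and $(5)$ implies $T^*(x^\alpha)=0$ there by way of Proposition \ref{p:extrapolation} and Lemma \ref{l:moments}; hence $(3)\Rightarrow(1)$. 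This closes both cycles.

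The substantive mathematics is thus already in place, and the remaining work is the $WBP_\nu$ verification above plus a modest amount of parameter bookkeeping: $CZO_\nu(L+\gamma)$ supplies kernel regularity of order $M=L$, whereas Theorems \ref{t:calculus}, \ref{t:TriebelLizorkinBounds}, and \ref{t:T1sufficient} are stated with $M\ge\max(L,L-\nu)$, so for $\nu<0$ one must either know the extra regularity a priori or read the hypothesis as $T\in CZO_\nu(M+\gamma)$ with $M\ge\max(L,L-\nu)$; for $\nu\ge0$ there is nothing to check. To the extent anything here is an obstacle, it is the first step: the cited theorems assume $WBP_\nu$ rather than derive it from $L^p$-boundedness, and the derivation, while elementary, is the one place the argument does not reduce to citing earlier results, and it requires the small care with the admissible range of $p$ noted above when $\nu\le0$.
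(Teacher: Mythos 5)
Your proposal is correct and follows exactly the route the paper takes: the paper's entire proof is the one-line observation that the corollary ``follows immediately from Theorems \ref{t:calculus}, \ref{t:TriebelLizorkinBounds}, and \ref{t:T1sufficient},'' i.e., the same two cycles $(1)\Rightarrow(2)\Rightarrow(1)$ and $(1)\Rightarrow(3)\Rightarrow(1)$ you describe. Your additional verification that $CZO_\nu$ membership forces $WBP_\nu$, and your remark on the $M\geq\max(L,L-\nu)$ bookkeeping for $\nu<0$, fill in details the paper leaves implicit and are both sound.
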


This corollary follows immediately from Theorems \ref{t:calculus}, \ref{t:TriebelLizorkinBounds}, and \ref{t:T1sufficient}.  We should also note that one could obtain many other equivalent conditions to put on this list by turning to Theorems \ref{t:TriebelLizorkinBounds} and \ref{t:T1sufficient}, as well as Corollaries \ref{c:dual1} and \ref{c:dual2}.

There is a long history of results along the lines Corollary \ref{c:CZvequiv}.  Several boundedness results for $\nu=0$ similar to the ones proved in Theorems \ref{t:BesovBounds} and \ref{t:TriebelLizorkinBounds} (as well as Corollaries \ref{c:dual1} and \ref{c:dual2}) can be found for example in \cite{AM,FTW,T,FHJW,HH,CMbook,LW,HartLu1,HO} as well as several of the references therein.  However, we note that there do not seem to be many results like Theorem \ref{t:TriebelLizorkinBounds} in the sense that we obtain boundedness on for Triebel-Lizorkin spaces with weights in a Muckenhoupt $A_{\lambda p}$ for $\lambda>1$ class that exceed the Lebesgue space parameter $p$.  The only articles we are aware of where such estimates are proved are \cite{LZ,HO}, where the results are limited to $\nu=0$ order operator acting on Hardy spaces.  Along these lines, when $\nu=0$ one can add more equivalent conditions than the ones already mentioned by involving weighted and unweighted Hardy space boundedness properties; see \cite{AM,HartLu1,HO} for more information on this.

The class of operators $CZO_\nu$ for $\nu\neq0$ have been studied to much lesser extant than the $\nu=0$ order operators that fall within scope of traditional zero-order Calder\'on-Zygmund theory.  The most relevant resource in the literature for non-zero-order operator results of this type is \cite{T}, where several sufficient conditions for an operator in $SIO_\nu$ to be bounded on homogeneous Besov and Triebel-Lizorkin spaces.  However, conditions of the form $T^*(x^\alpha)=0$ and $T1=0$ were assumed in order to prove such boundedness results.  Here we remove the assumption on $T1=0$, show that such boundedness properties are also sufficient for $T^*(x^\alpha)=0$ conditions, and include several other equivalent conditions involving weighted estimates, endpoint Besov and Triebel-Lizorkin spaces, and our restricted operator calculus.  One can also compare the following corollary for $\nu<0$ to the results in \cite{CHO}, but the results here do not imply the ones in \cite{CHO}, nor vice versa.

\subsection{Pseudodifferential Operators}\label{Sect7.2}

In this application we consider the forbidden class of pseudodifferential operators $OpS_{1,1}^0$.  They are defined as follows.  We say $\sigma\in S_{1,1}^0$ if
\begin{align*}
|D_\xi^\alpha D_x^\beta\sigma(x,\xi)|\less(1+|\xi|)^{|\beta|-|\alpha|}.
\end{align*}
for all $\alpha,\beta\in\N_0^n$, and $T_\sigma\in OpS_{1,1}^0$ is the associated operator defined
\begin{align*}
T_\sigma f(x)=\int_{\R^n}\sigma(x,\xi)\widehat f(\xi)e^{ix\xi}d\xi
\end{align*}
for $f\in\S$.  The reason $OpS_{1,1}^0$ is referred to as a forbidden class, or sometimes an exotic class, of operators is because it is not closed under transpose, and they are not necessarily $L^2$-bounded.  However, any $T_\sigma\in OpS_{1,1}^0$ has a standard kernel and is bounded on several smooth function spaces.  For instance, such $T_\sigma$ is bounded on several classes of inhomogeneous Lipschitz, Sobolev, Besov, and Triebel-Lizorkin spaces; see for example \cite{M1,R,Bourd,Horm1,Horm2,T2,St3,CMbook}.  All of these inhomogeneous space estimates are obtained in the absence of vanishing moment assumptions.  On the other hand, Meyer showed that under vanishing moment conditions $T_\sigma(x^\alpha)=0$ for $\sigma\in S_{1,1}^0$, $T_\sigma$ is also bounded on homogeneous Lipschitz and Sobolev spaces; see \cite{M2}.  Our next result provides more estimates along the lines of Meyer's that require vanishing moments for the operator.  We also note that Bourdaud proved a noteworthy result in \cite{Bourd} about the largest sub-algebra of $OpS_{1,1}^0$.  In particular, he showed that the subclass of $OpS_{1,1}^0$ made up of operators $T_\sigma\in OpS_{1,1}^0$ so that $T_\sigma^*\in OpS_{1,1}^0$ is an algebra and that all such operators are $L^2$-bounded.

\begin{corollary}\label{c:S11}
Let $T_\sigma\in OpS_{1,1}^0$ and $L\in\N_0$.  If $T_\sigma^*(x^\alpha)=0$ for all $|\alpha|\leq L$, then Theorem \ref{t:calculus}, Theorem \ref{t:BesovBounds}, Theorem \ref{t:TriebelLizorkinBounds}, Corollary \ref{c:dual1}, and Corollary \ref{c:dual2} can all be applied to $T_\sigma$.  If $T_\sigma(x^\alpha)=0$ for all $|\alpha|\leq L$, then the same results can be applied to $T_\sigma^*$.
\end{corollary}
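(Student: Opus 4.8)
The plan is to reduce the corollary to verifying three properties of $T_\sigma$, which together constitute the standing hypotheses common to Theorem~\ref{t:calculus}, Theorem~\ref{t:BesovBounds}, Theorem~\ref{t:TriebelLizorkinBounds}, Corollary~\ref{c:dual1}, and Corollary~\ref{c:dual2} when specialized to $\nu=0$: that $T_\sigma\in SIO_0(M+\gamma)$ for suitable $M$ and $\gamma$; that $T_\sigma$ satisfies $WBP_0$; and that $T_\sigma^*(x^\alpha)=0$ for all $|\alpha|\le L$. The third is precisely the hypothesis of the corollary, so only the first two need to be established. The key point is that none of the five cited results presupposes any a priori boundedness of the operator --- in particular, neither $L^2$-boundedness nor membership in $CZO_0$ --- so the classical failure of $L^2$-boundedness in the forbidden class is no obstruction. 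Once the three properties are in place, each result applies with $\nu=0$, $\gamma=1$ (legitimate since $L\in\N_0$ gives $(L-\nu)_*=0<1$), and any integer $M\ge L$.

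First I would check that $T_\sigma\in SIO_0(\infty)$. Since $\widehat f\in\S$ for $f\in\S$ and $|\sigma(x,\xi)|\lesssim1$, the operator $T_\sigma$ maps $\S$ continuously into the space of smooth functions with all derivatives bounded, hence continuously into $\S'$; its transpose is then continuous from $\S$ into $\S'$ as well, and off the diagonal the distributional kernel of $T_\sigma$ agrees with the oscillatory integral $K(x,y)=\int_{\R^n}\sigma(x,\xi)e^{i(x-y)\cdot\xi}\,d\xi$. Expanding $D_x^\alpha D_y^\beta K$ by the Leibniz rule produces an amplitude $a(x,\xi)$ with $|D_\xi^\gamma a(x,\xi)|\lesssim(1+|\xi|)^{|\alpha|+|\beta|-|\gamma|}$ for all $\gamma$ --- the loss $(1+|\xi|)^{|\alpha'|}$ coming from $D_x^{\alpha'}\sigma$ is exactly balanced by the gain $|\xi|^{|\alpha|-|\alpha'|}$ from differentiating the phase, which is where the ``forbidden'' structure enters. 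A dyadic decomposition in $\xi$ together with repeated integration by parts (the standard Calder\'on-Zygmund kernel estimate; see, e.g., \cite{St3,CMbook}) then gives $|D_x^\alpha D_y^\beta K(x,y)|\lesssim|x-y|^{-(n+|\alpha|+|\beta|)}$ for all $x\ne y$ and all $\alpha,\beta\in\N_0^n$. Because derivative bounds of every order are available, the H\"older conditions at each level $M$ follow from the mean value theorem, and the disjoint-support kernel representation is immediate; hence $T_\sigma\in SIO_0(M+\gamma)$ for every $M\in\N_0$ and $0<\gamma\le1$.

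Next I would verify $WBP_0$. Fix a ball $B=B(x_0,r)$ and functions $\psi\in\D_M$, $\varphi\in C_0^\infty$ supported in $B$ with $\|D^\alpha\psi\|_{L^\infty},\|D^\alpha\varphi\|_{L^\infty}\le|B|^{-|\alpha|/n}=r^{-|\alpha|}$ for $|\alpha|\le N$, where I take $N=n+1$. Writing $\widetilde\psi_0(z)=\psi(x_0+rz)$, a normalized bump on the unit ball, one has $\widehat\psi(\xi)=r^n e^{-ix_0\cdot\xi}\widehat{\widetilde\psi_0}(r\xi)$, so $\|\widehat\psi\|_{L^1}=\|\widehat{\widetilde\psi_0}\|_{L^1}\lesssim1$ is scale invariant; since $\sigma\in S_{1,1}^0$ yields $\|\sigma\|_{L^\infty(\R^{2n})}<\infty$, we get $\|T_\sigma\psi\|_{L^\infty}\le\|\sigma\|_{L^\infty}\|\widehat\psi\|_{L^1}\lesssim1$, and therefore
\[
|\langle T_\sigma\psi,\varphi\rangle|\le\|\varphi\|_{L^1}\,\|T_\sigma\psi\|_{L^\infty}\le\|\varphi\|_{L^\infty}\,|B|\cdot C\lesssim|B|.
\]
Interchanging the roles of $\psi$ and $\varphi$ and using $\langle T_\sigma^*\psi,\varphi\rangle=\langle T_\sigma\varphi,\psi\rangle$ gives the same bound for the second term, so $WBP_0$ holds (in fact with $M=0$, so no cancellation of $\psi$ is needed). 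Together with the preceding paragraph and the hypothesis $T_\sigma^*(x^\alpha)=0$ for $|\alpha|\le L$, this verifies all the hypotheses of the five cited results for $T_\sigma$, which is the first assertion.

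For the assertion about $T_\sigma^*$: the class $SIO_0$ and the property $WBP_0$ are symmetric under transposition (the remark recording that $SIO_\nu$ and $WBP_\nu$ are invariant under $T\mapsto T^*$), so $T_\sigma^*\in SIO_0(\infty)$ and satisfies $WBP_0$; and under the hypothesis $T_\sigma(x^\alpha)=0$ we have $(T_\sigma^*)^*(x^\alpha)=T_\sigma(x^\alpha)=0$, so the same five results apply to $T_\sigma^*$. I expect the only genuinely computational step to be the $S_{1,1}^0$ kernel estimate, which is nonetheless entirely classical and essentially identical to the corresponding estimate for $S_{1,0}^0$ symbols; the conceptual content of the corollary is simply that routing the conclusions through the combination of $SIO_0$, $WBP_0$, and vanishing moments, rather than through $L^2$-boundedness, is exactly what makes the forbidden class accessible.
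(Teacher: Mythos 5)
Your proposal is correct and follows essentially the same route as the paper: the paper's proof also reduces to the facts that $\sigma\in S_{1,1}^0$ implies $T_\sigma\in SIO_0(\infty)$ (cited as well known), that the bound $|\langle T_\sigma f,g\rangle|\lesssim\|\widehat f\|_{L^1}\|g\|_{L^1}$ gives $WBP_0$ (your normalized-bump computation is exactly this estimate unwound), and that $SIO_0(\infty)$ and $WBP_0$ are closed under transposition. You simply supply the classical kernel and $WBP_0$ details that the paper leaves implicit.
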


Note that Corollary \ref{c:S11} does not require, nor imply, that $T_\sigma$ is bounded on $L^2$.  In fact, there are standard constructions of operators to which we can apply Corollary \ref{c:S11} that are not $L^2$-bounded, as is shown in the next example.

It should also be noted here that even though $OpS_{1,1}^0$ is not closed under transposes, Corollary \ref{c:S11} applies to both $T_\sigma$ and its transpose for any $\sigma\in S_{1,1}^0$.  This is because $S_{1,1}^0\subset SIO_0(\infty)$ and $SIO_0(\infty)$ is closed under transposes.  Hence for any $T_\sigma\in OpS_{1,1}^0$, both $T_\sigma,T_\sigma^*\in SIO_0(\infty)$, and so Corollary \ref{c:S11} is even capable of concluding operator estimates for operators that do not belong to $OpS_{1,1}^0$.

\begin{proof}[of Corollary \ref{c:S11}]
It is well known that $\sigma\in S_{1,1}^0$ implies $T_\sigma\in  SIO_0(\infty)$.  That is, it is known that such $T_\sigma$ are continuous from $\S$ into $\S'$, and have a standard functional kernel $K(x,y)$.  It is also easy to show that $|\<T_\sigma f,g\>|\less\|\widehat f\|_{L^1}\|g\|_{L^1}$ for all $f,g\in\S$, and so $T_\sigma$ trivially satisfies $WBP_0$.  Recall that $SIO_0(\infty)$ and $WBP_0$ are closed under transposition, and the corollary easily follows.
\end{proof}

\begin{example}
Let $\psi\in\S_\infty$ be such that $\widehat\psi$ is supported in an annulus, and define
\begin{align*}
\sigma(x,\xi)=\sum_{k\in\Z}e^{-i2^kx}\widehat\psi(2^{-k}\xi),
\end{align*}
as well as the associated pseudodifferential operator $T_\sigma$.  It is known that $\sigma\in S_{1,1}^0$ and hence $T_\sigma\in OpS_{1,1}^0$; see for example \cite{St3} for more details.  It is easy to verify that $(T_\sigma^*)^*(x^\alpha)=T_\sigma(x^\alpha)=0$ for all $\alpha\in\N_0^n$.  Then Corollary \ref{c:S11} can be applied to $T_\sigma^*$.  Furthermore, since $T_\sigma(x^\alpha)=0$ for all $\alpha\in\N_0^n$, the restrictions involving $L$ can be removed entirely and one can allow $t>0$ without bound.  So $T_\sigma$ is bounded, for example, on $\dot F_{p,w}^{t,q}$ for all $1<p,q<\infty$, $t>0$, and $w\in A_\infty$.  Also, for every $s,t>0$, there is an operator $T_{s,t}\in CZO_{t-s}$ such that $|\nabla|^{-s}T_\sigma^*|\nabla|^tf-T_{s,t}f$ is a polynomial for all $f\in\S_\infty$.  In particular, $|\nabla|^{-t}T_\sigma^*|\nabla|^t-P_f$ and $|\nabla|^tT_\sigma|\nabla|^{-t}-\tilde P_f$ are Calder\'on-Zygmund operators in $CZO_0$ for all $t>0$, where $P_f$ and $\tilde P_f$ are polynomials depending on $f$ and $t$.
\end{example}

\subsection{Paraproducts}\label{Sect7.3}

In this section, we consider a generalization of the Bony paraproduct, constructed originally in \cite{B}.  The crucial properties of this operator are, for a given $b\in BMO$, the Bony paaproduct $\Pi_b$ is a Calder\'on-Zygmund operator (in particualr $L^2$-bounded), $\Pi_b1=b$, and $\Pi_b^*1=0$.  This operator played a crucial role in the proof of the $T1$ theorem of David and Journ\'e \cite{DJ}, and it has appear in many other places in various forms.

In this section, we construct paraproducts $\Pi_b^\alpha\in SIO_\nu$ for $b\in \dot B_\infty^{|\alpha|-\nu}$, $\alpha\in\N_0^n$, and $\nu\in\R$.  They satisfy prescribed polynomial moment conditions, including $(\Pi_b^\alpha)^*(x^\alpha)=0$ conditions, and hence are bounded on several negative smoothness distribution spaces.  However, they need not (and some in fact do not) belong to $CZO_\nu$ or satisfy any continuous mapping properties into Lebesgue spaces.  See the Corollary \ref{c:paraproducts}, Lemma \ref{l:paraproducts}, and the remarks at the end of Sections \ref{Sect7.3} and \ref{Sect7.4} for more on this.

Let $\nu\in\R$, $\alpha\in\N_0^n$, and $b\in \dot B_\infty^{|\alpha|-\nu,\infty}$.  Also let $\widetilde \psi$ and $\psi$ be as in Lemma \ref{l:calderon}, and $\varphi\in\S$ with integral $1$.  Define the paraproduct operator
\begin{align}\label{Pib}
\Pi_b^\alpha f(x)=\frac{(-1)^{|\alpha|}}{\alpha!}\sum_{k\in\Z}\widetilde Q_k(Q_kb\cdot P_kD^\alpha f)(x).
\end{align}
We can apply our results to these paraproducts as well, as is shown in the next corollary.

\begin{corollary}\label{c:paraproducts}
Let $\nu\in\R$, $\alpha\in\N_0^n$, and $b\in \dot B_\infty^{|\alpha|-\nu,\infty}$.  Then $\Pi_b^\alpha\in SIO_\nu(\infty)$ satisfies $WBP_\nu$ and $(\Pi_b^\alpha)^*(x^\beta)=0$ for all $\beta\in\N_0^n$, where $\Pi_b^\alpha$ is as in \eqref{Pib}.  Hence Theorem \ref{t:calculus}, Theorem \ref{t:BesovBounds}, Theorem \ref{t:TriebelLizorkinBounds}, Corollary \ref{c:dual1}, and Corollary \ref{c:dual2} can all be applied to $\Pi_b^\alpha$ with any number of vanishing moments.
\end{corollary}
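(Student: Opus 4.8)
The plan is to verify the three structural hypotheses that the cited results require of $\Pi_b^\alpha$ — that $\Pi_b^\alpha\in SIO_\nu(\infty)$, that $\Pi_b^\alpha$ satisfies $WBP_\nu$, and that $(\Pi_b^\alpha)^*(x^\beta)=0$ for every $\beta\in\mathbb N_0^n$ — and then quote Theorem \ref{t:calculus}, Theorem \ref{t:BesovBounds}, Theorem \ref{t:TriebelLizorkinBounds}, Corollary \ref{c:dual1}, and Corollary \ref{c:dual2}. Because the moment condition holds for all $\beta$, the integer $L$ in those results may be taken arbitrarily large, so $\widetilde L=\lfloor L-\nu\rfloor$ is unconstrained; this is exactly the ``any number of vanishing moments'' assertion. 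Throughout I would fix $P$ in Lemma \ref{l:calderon} large depending on $\nu$ and $\alpha$, and use freely the standard Littlewood--Paley bound $\|\psi_k*b\|_{L^\infty}\less 2^{-(|\alpha|-\nu)k}\|b\|_{\dot B_\infty^{|\alpha|-\nu,\infty}}$ (valid for $\psi\in\mathcal D_P$ with $P$ large), the domination of $\widetilde\psi_k$, $(\partial^\alpha\varphi)_k$ and their derivatives by $\Phi_k^N$ for every $N$ with the expected powers of $2^k$, and that $\widetilde\psi\in\S_\infty$ has vanishing moments of every order.

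For the kernel, an integration by parts transfers $D^\alpha$ onto $\varphi$, showing that $\Pi_b^\alpha$ has kernel
\begin{align*}
K(x,y)=\frac{(-1)^{|\alpha|}}{\alpha!}\sum_{k\in\Z}2^{k|\alpha|}\int_{\R^n}\widetilde\psi_k(x-z)(\psi_k*b)(z)(\partial^\alpha\varphi)_k(z-y)\,dz,
\end{align*}
an absolutely convergent integral when $x\neq y$, which supplies the required kernel representation on disjoint supports. The crude bound on the $k$-summand of $D_x^aD_y^\beta K$ is $\less 2^{(\nu+|a|+|\beta|)k}\|b\|_{\dot B_\infty^{|\alpha|-\nu,\infty}}\Phi_k^N(x-y)$, and by Lemma \ref{l:kernelsum} this sums over $\{k:2^k\gtrsim|x-y|^{-1}\}$ to $|x-y|^{-(n+\nu+|a|+|\beta|)}$; on $\{k:2^k\less|x-y|^{-1}\}$ it diverges when $n+\nu\le0$, which is the one genuinely delicate point. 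Here I would use that $\widetilde\psi_k$ annihilates polynomials: writing the inner integral as $\widetilde\psi_k*F_k(x)$ with $F_k$ the product of $\psi_k*b$ and $(\partial^\alpha\varphi)_k(\,\cdot-y)$ (or a $y$-derivative thereof), one has $\widetilde\psi_k*F_k(x)=\int\widetilde\psi_k(x-z)(F_k(z)-J_y^{L}[F_k](z))\,dz$ for any $L$, with Taylor remainder $\less |z-y|^{L+1}\sup|D^{L+1}F_k|\less|x-y|^{L+1}2^{(L+1)k}2^{-(|\alpha|-\nu)k}\|b\|_{\dot B_\infty^{|\alpha|-\nu,\infty}}2^{kn}$ on the range of $z$ where $\widetilde\psi_k$ is not negligible; the resulting summand $\less|x-y|^{L+1}2^{(n+L+1+\nu+|a|+|\beta|)k}\|b\|_{\dot B_\infty^{|\alpha|-\nu,\infty}}$ sums correctly once $L$ is large. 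The same device applied to $h$-differences of $\widetilde\psi_k$ and $(\partial^\alpha\varphi)_k$ gives H\"older estimates with any exponent $\gamma'\le1$, so $\Pi_b^\alpha\in SIO_\nu(M+\gamma)$ for every $M$ and $\gamma$; since $SIO_\nu(\infty)$ is defined symmetrically, the bounds for $K(y,x)$ handle $(\Pi_b^\alpha)^*$.

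Continuity of $\Pi_b^\alpha$ and $(\Pi_b^\alpha)^*$ from $\S_P$ into $\S'$ and the property $WBP_\nu$ are proved by the same kind of telescoping estimate on the defining sum. For $f\in\S_P$ and $g\in\S$ one has $\langle\Pi_b^\alpha f,g\rangle=\frac{(-1)^{|\alpha|}}{\alpha!}\sum_k\int(\psi_k*b)(z)(P_kD^\alpha f)(z)(\widetilde\psi_k*g)(z)\,dz$, and each term is controlled by $\|\psi_k*b\|_{L^\infty}$ times an $L^1$--$L^\infty$ pairing of $P_kD^\alpha f$ and $\widetilde\psi_k*g$; the series converges for $P$ large because $D^\alpha f$, $\psi$ and $\widetilde\psi$ all carry enough vanishing moments to force decay as $k\to-\infty$ while Schwartz decay handles $k\to+\infty$, and the transpose, which has the analogous form with a convolution and a pointwise product interchanged, is treated identically. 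For $WBP_\nu$, given a ball $B$ of radius $r$ and bumps $\psi\in\mathcal D_M$, $g\in C_0^\infty$ supported in $B$ with normalized derivatives, I would split the sum at $2^k\sim r^{-1}$: for $2^k\gtrsim r^{-1}$ pair $\|P_kD^\alpha(\cdot)\|_{L^\infty}\less r^{-|\alpha|}$ against $\|\widetilde\psi_k*(\cdot)\|_{L^1}\less(2^{-k}r^{-1})^{A}r^n$ (high-frequency cancellation of a smooth bump), and for $2^k\less r^{-1}$ pair $\|P_kD^\alpha(\cdot)\|_{L^1}\less r^{n-|\alpha|}$ against $\|\widetilde\psi_k*(\cdot)\|_{L^\infty}\less r^{n+M+1}2^{(n+M+1)k}$ (the $M$ vanishing moments of the bump). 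With $M$ large both pieces sum to $\less\|b\|_{\dot B_\infty^{|\alpha|-\nu,\infty}}r^{n-\nu}=\|b\|_{\dot B_\infty^{|\alpha|-\nu,\infty}}|B|^{1-\nu/n}$, and the computation with $\psi$ and $g$ interchanged gives the bound for $\langle(\Pi_b^\alpha)^*\psi,g\rangle$.

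Finally, $(\Pi_b^\alpha)^*(x^\beta)=0$ is where $\widetilde\psi\in\S_\infty$ pays off, and I would flag the one place needing care as the interchange of the $k$-sum with integration against $x^\beta$. For $\psi\in\mathcal D_P$ with $P$ large, the kernel bounds of all orders give $\Pi_b^\alpha\psi\in L^1(1+|x|^{N})$ for every $N$ (subtract the Taylor polynomial of $K(x,\cdot)$ about a point of $\supp\psi$), so $\widehat{\Pi_b^\alpha\psi}\in C^\infty$ and $\int x^\beta\Pi_b^\alpha\psi(x)\,dx=(i\partial)^\beta\widehat{\Pi_b^\alpha\psi}(0)$. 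But $\widehat{\Pi_b^\alpha\psi}=\frac{(-1)^{|\alpha|}}{\alpha!}\sum_k\widehat{\widetilde\psi}(2^{-k}\cdot)\,\widehat{h_k}$ with $h_k=(\psi_k*b)(P_kD^\alpha\psi)$ Schwartz and $\|h_k\|$, together with its derivatives, decaying geometrically in $|k|$ for $P$ large, while $\widehat{\widetilde\psi}$ vanishes to infinite order at the origin; hence the series converges in $C^\infty$ near $0$, so $\widehat{\Pi_b^\alpha\psi}$ vanishes to infinite order there and $\int x^\beta\Pi_b^\alpha\psi(x)\,dx=0$ for every $\beta$. Since $\Pi_b^\alpha\psi\in L^1(1+|x|^{|\beta|})$, dominated convergence with the cutoffs $\eta_R$ of Definition \ref{d:Tx} yields $\langle(\Pi_b^\alpha)^*(x^\beta),\psi\rangle=\lim_{R\to\infty}\int\Pi_b^\alpha\psi(x)x^\beta\eta_R(x)\,dx=0$, i.e. $(\Pi_b^\alpha)^*(x^\beta)=0$. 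The main obstacle is the kernel estimate for general $\nu$, specifically the range $n+\nu\le0$, where it genuinely depends on exploiting the infinitely many vanishing moments of $\widetilde\psi$ through the Taylor-expansion device above; the rest is standard paraproduct bookkeeping, and the ``hence'' part then follows by quoting the five cited results with $L$ arbitrary.
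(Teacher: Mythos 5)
Your architecture is the same as the paper's: the kernel representation obtained by moving $D^\alpha$ onto $\varphi$, the termwise bound $2^{(\nu+|a|+|\beta|)k}\Phi_k^N(x-y)$ summed by Lemma \ref{l:kernelsum}, pairing estimates for continuity from $\S_P$ into $\S'$ and for $WBP_\nu$, and the vanishing moments of $\widetilde\psi\in\S_\infty$ for $(\Pi_b^\alpha)^*(x^\beta)=0$. The paper runs the continuity/$WBP_\nu$ step through an $L^2\times\dot B_2^{\nu,1}$ (and $\dot W^{-M,2}\times\dot B_2^{M+\nu,1}$) pairing rather than your $L^1$--$L^\infty$ splitting at $2^k\sim r^{-1}$, and it disposes of the moment condition in one line, whereas your Fourier-side argument with the $\eta_R$ cutoffs spells out the interchange of sum and integral; those differences are cosmetic and your versions are sound.

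The genuine gap is exactly at the point you flagged, and your repair does not work as written. In the regime $2^k\less|x-y|^{-1}$, the effective support of $\widetilde\psi_k(x-z)$ is $|x-z|\less 2^{-k}$, so there one only has $|z-y|\less 2^{-k}+|x-y|\approx 2^{-k}\gg|x-y|$; the inequality $|z-y|^{L+1}\less|x-y|^{L+1}$ that you use to convert the Taylor remainder into a power of $|x-y|$ is false precisely where you need it. Substituting the correct bound $|z-y|\less 2^{-k}$ cancels the factor $2^{(L+1)k}$ from $\sup|D^{L+1}F_k|$ and returns you to the crude estimate $2^{(n+\nu+|a|+|\beta|)k}$, so the sum over $k\to-\infty$ is not repaired and the case $n+\nu+|a|+|\beta|\leq 0$ remains open in your argument. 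For comparison, the paper's proof simply applies Lemma \ref{l:kernelsum}, whose hypothesis $M>-n$ here reads $\nu+|a|+|\beta|>-n$; this is automatic when $\nu>-n$ (so the paper's argument is complete in that range, and yours would be too if you dropped the flawed extra step), but for $\nu\leq-n$ the low-order kernel sums need not even converge absolutely — the situation is analogous to $|\nabla|^{-s}$ for $s\geq n$, where the kernel only makes sense after a polynomial renormalization — and neither your argument nor the paper's addresses it.
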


\begin{proof}
It is trivial to see that $\Pi_b$ is continuous from $\S_P$ into $\S'$ for an appropriately chosen $P\in\N$.  Indeed, taking $M\in\N$ to be an even integer larger than $|\nu|$, and $g\in \S$, we have
\begin{align*}
|\<\Pi_bf,g\>|
&\leq\|b\|_{\dot B_\infty^{|\alpha|-\nu,\infty}}\sum_{k\in\Z}\int_{\R^n}2^{(M+\nu) k}|(|\nabla|^MD^\alpha\varphi)_k* (|\nabla|^{-M}f)(x)\widetilde Q_kg(x)|dx\\
&\less\|b\|_{\dot B_\infty^{|\alpha|-\nu,\infty}}\|\,|\nabla|^{-M}f\|_{L^2}\sum_{k\in\Z}2^{(M+\nu) k}\|\widetilde Q_kg\|_{L^2}\less\|b\|_{\dot B_\infty^{|\alpha|-\nu,\infty}}\|f\|_{\dot W^{-M,2}}\|g\|_{\dot B_2^{M+\nu,1}}
\end{align*}
as long as $M>|\nu|$ (which assures that $\S\subset\dot B_2^{M+\nu,1}$ since $M+\nu>0$).  For $f\in\S$ and $g\in \S_P$
\begin{align*}
|\<\Pi_bf,g\>|
&\leq\|b\|_{\dot B_\infty^{|\alpha|-\nu,\infty}}\sum_{k\in\Z}\int_{\R^n}2^{\nu k}|(D^\alpha\varphi)_k* f(x)\widetilde Q_kg(x)|dx\\
&\less\|b\|_{\dot B_\infty^{|\alpha|-\nu,\infty}}\|f\|_{L^2}\sum_{k\in\Z}2^{\nu k}\|\widetilde Q_kg\|_{L^2}\less\|b\|_{\dot B_\infty^{|\alpha|-\nu,\infty}}\|f\|_{L^2}\|g\|_{\dot B_2^{\nu,1}}.
\end{align*}
Here we choose $P\in\N$ large enough so that $\mathcal D_P\subset \dot W^{-M,2}\cap \dot B_2^{\nu,1}$.  Note that this is also sufficient to show that $\Pi_b^\alpha$ and $(\Pi_b^\alpha)^*$ both satisfy $WBP_\nu$.  The kernel of $\Pi_b$ is
\begin{align*}
\pi_b^\alpha (x,y)&=\frac{(-1)^{|\alpha|}}{\alpha!}\sum_{k\in\Z}2^{k|\alpha|}\int_{\R^n}\widetilde\psi_k(x-u)Q_kb(u) (D^\alpha\varphi)_k(u-y)du.
\end{align*}
For $\beta,\mu\in\N_0^n$ and $x\neq y$, it follows that
\begin{align*}
|D_x^\beta D_y^\mu\pi_b^\alpha (x,y)|&\less\|Q_kb\|_{\dot B_\infty^{|\alpha|-\nu}}\sum_{k\in\Z}2^{(\nu+|\beta|+|\mu| )k}\Phi_k^{n+\nu+|\beta|+|\mu|+1}(x-y)\\
&\less\|Q_kb\|_{\dot B_\infty^{|\alpha|-\nu}}\frac{1}{|x-y|^{n+\nu+|\beta|+|\mu|}}.
\end{align*}
Therefore $\Pi_b\in SIO_\nu(\infty)$.  It is easy to see that $(\Pi_b^\alpha)^*(x^\alpha)=0$ since $\widetilde\psi_k\in\S_\infty$.
\end{proof}

\begin{remark}
It is worth noting that $\Pi_b^\alpha$ may not belong to $CZO_\nu$, but we still conclude many boundedness results for it.  In particular, in when $\nu=0$ and $b\in \dot B_\infty^{0,\infty}\backslash BMO$, the $T1$ theorem implies that $\Pi_b^0$ is not $L^2$-bounded.  However, we still conclude boundedness results for $\Pi_b^0$ on negative smoothness spaces, just not on any Lebesgue spaces.  It is likely, by analogy, that $b\in \dot B_\infty^{|\alpha|-\nu,\infty}\backslash I_{|\alpha|-\nu}(BMO)$ implies that $\Pi_b^\alpha$ is not bounded from $\dot W^{\nu,2}$ into $L^2$ (and hence $\Pi_b^\alpha$ would not belong to $CZO_\nu$), but we don't pursue this property here.
\end{remark}

\subsection{Smooth and Oscillating Operator Decompositions}\label{Sect7.4}

In this application, we decompose a singular integral operator $T$ into a sum of two terms $S+O$, one that preserves smoothness and one that preserves oscillatory properties of the input function.  We achieve this by constructing several paraproducts which satisfy $\Pi^*(x^\alpha)=0$ for all $\alpha$.  A sum of such operators define $O$, and hence $O$ enjoys all of the oscillatory preserving properties associated to the cancellation conditions of the form $O^*(x^\alpha)=0$.  Furthermore, $S$ will be constructed so that $S(x^\alpha)=0$ for appropriate multi-indices $\alpha$, which is sufficient for $S$ to be bounded on many smooth function spaces.

To motivate this, let's consider for a moment an operator $T\in SIO_\nu(\infty)$ of convolution type.  That is, assume there is a distribution kernel $k\in\S'(\R^n)$ such that $Tf(x)=\<k,f(x-\cdot)\>$ for $f\in \S_P$ for some $P\in\N_0^n$.  Such an operator preserves both regularity and oscillation since convolution operators commute.  For instance, suppose that $T$ is bounded from $X$ into $Y$, where $X,Y\subset\S'/\mathcal P$ are Banach spaces.  It follows that $T$ is bounded from $I_s(X)$ into $I_s(Y)$ for all $s\in\R$, where $I_s(X)=\{|\nabla|^{s}f:f\in X\}$ with the natural norm $\|f\|_{I_s(X)}=\|\,|\nabla|^{s}f\|_X$.  This is because
$$\|Tf\|_{I_s(X)}=\|\,|\nabla|^{s}(Tf)\|_{X}=\|T(|\nabla|^{s}f)\|_{X}\less\|\,|\nabla|^{s}f\|_Y=\|f\|_{I_s(Y)}.$$
When $s>0$, this says that if $f$ has $s$-order derivatives in $X$, then $Tf$ has $s$-order derivatives in $Y$.  For $s<0$, it says that $f$ has $s$-order anti-derivatives in $X$, then $Tf$ has $s$-order anti-derivatives in $Y$, which in many situations quantify oscillatory properties of $f$ and $Tf$.  Hence convolution operators simultaneously preserve both regularity and oscillatory properties of its input functions.  This cannot be expected for non-convolution operators, but the main result of this section formulates a decomposition that extends this principle to non-convolution operators in some sense.  We show, roughly, that for any operator $T\in SIO_\nu$, we can decompose $T=S+O$, where $S$ preserves smoothness and $O$ preserves oscillation.  This is achieved by constructing $S$ and $O$ so that $S|\nabla|^{s}\approx |\nabla|^{s}S$ and $O|\nabla|^{-s}\approx |\nabla|^{-s}O$ for $s>0$, in the appropriate sense, so that $S$ and $O$ each behave like a convolution operator on one side.  Based on our operator calculus from Section \ref{Sect4}, to construct $S$ and $O$ in this way, it is sufficient to make sure that $S(x^\alpha)=0$ and $O^*(x^\alpha)=0$ for appropriate $\alpha$.  This is made precise below.

We will have to use the non-convolutional moment for singular integral operators, which were defined for $SIO_0$ in \cite{HartLu1,HO}.  For $T\in SIO_\nu(M+\gamma)$, $\alpha\in\N_0^n$ with $|\alpha|\leq \nu+M$, define $[[T]]_\alpha\in\mathcal D_{2P}$ by
\begin{align*}
\<[[T]]_\alpha,\psi\>&=\lim_{R\rightarrow\infty}\int_{\R^{2n}}\mathcal K(x,y)(x-y)^\alpha \eta_R(y)\psi(x)dy\,dx.
\end{align*}
Following the same argument used to justify the definition of $T(x^\alpha)$, we can define $[[T]]_\alpha\in\mathcal D_{2P}'$ for the same ranges of indices.  See also \cite{HartLu1,HO} for more information on this construction.

\begin{lemma}\label{l:TxEquiv}
Let $T\in SIO_\nu(M+\gamma)$ and $L\leq \nu+M$.  Then $T(x^\alpha)=0$ for all $|\alpha|\leq L$ if and only if $[[T]]_\alpha=0$ for all $|\alpha|\leq L$.
\end{lemma}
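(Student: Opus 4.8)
The plan is to expand both families of moments in terms of the binomial theorem and reduce the equivalence to a triangular linear change of variables. The key observation is that for a multi-index $\alpha$,
\begin{align*}
(x-y)^\alpha=\sum_{\beta\leq\alpha}\binom{\alpha}{\beta}(-1)^{|\alpha-\beta|}x^\beta y^{\alpha-\beta},
\end{align*}
so that, at least formally,
\begin{align*}
\<[[T]]_\alpha,\psi\>=\sum_{\beta\leq\alpha}\binom{\alpha}{\beta}(-1)^{|\alpha-\beta|}\<T(y^{\alpha-\beta})\cdot x^\beta,\psi\>=\sum_{\beta\leq\alpha}\binom{\alpha}{\beta}(-1)^{|\alpha-\beta|}\<T(y^{\alpha-\beta}),x^\beta\psi\>.
\end{align*}
First I would make this rigorous by going back to the truncated definitions: replace $(x-y)^\alpha\eta_R(y)$ and $y^{\alpha-\beta}\eta_R(y)$ by their finite-$R$ versions, apply the binomial expansion termwise (a finite sum), and then pass to the limit $R\to\infty$. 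The justification that each term converges is exactly the same dominated-convergence argument already used in the excerpt to show $T(x^\alpha)$ and $[[T]]_\alpha$ are well-defined; one checks that for $\psi$ supported in a fixed ball and $x\in\supp(\psi)$, the kernel minus its Taylor jet times $y^{\alpha-\beta}(1-\eta_{R_0}(y))$ is dominated by an integrable function, uniformly in $R$. Note that $x^\beta\psi(x)\in\mathcal D_P$ still has the required vanishing moments (multiplication by a polynomial preserves membership in $\mathcal D_P$ only if we are careful — actually it need not, so one should instead absorb the $x^\beta$ factor by noting that the relevant pairing is $\<T(y^{\alpha-\beta}),x^\beta\psi\>$ where $T(y^{\alpha-\beta})$ is being tested against $x^\beta\psi\in\mathcal D_{P'}$ for a possibly smaller $P'$; since $T$ maps $\S_P$ into $\S'$ for our fixed large $P$ and we only need finitely many such pairings, choosing $P$ large enough at the outset handles this).

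Granting the identity $\<[[T]]_\alpha,\psi\>=\sum_{\beta\leq\alpha}\binom{\alpha}{\beta}(-1)^{|\alpha-\beta|}\<T(y^{\alpha-\beta}),x^\beta\psi\>$, the equivalence follows from elementary linear algebra. For the forward direction, suppose $T(x^\gamma)=0$ in $\mathcal D_{P}'$ for all $|\gamma|\leq L$. Then every term on the right-hand side with $|\alpha-\beta|\leq L$ vanishes; since $|\alpha|\leq L$ forces $|\alpha-\beta|\leq L$ for all $\beta\leq\alpha$, the whole sum is zero, so $[[T]]_\alpha=0$. For the converse, suppose $[[T]]_\alpha=0$ for all $|\alpha|\leq L$. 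I would run an induction on $|\alpha|$ (equivalently, invert the triangular system): writing $x^\alpha=\sum_{\beta\leq\alpha}\binom{\alpha}{\beta}(x-y)^\beta y^{\alpha-\beta}$ gives, by the same termwise argument,
\begin{align*}
\<T(x^\alpha),\psi\>=\sum_{\beta\leq\alpha}\binom{\alpha}{\beta}\<[[T]]_\beta\,,\,?\>\quad\text{(one more bookkeeping step needed here)},
\end{align*}
so that $T(x^\alpha)$ is a linear combination of the $[[T]]_\beta$ with $\beta\leq\alpha$ (after carefully tracking that the leftover $y^{\alpha-\beta}$ factors get expanded and reorganized); since all those $[[T]]_\beta$ vanish for $|\beta|\leq|\alpha|\leq L$, we conclude $T(x^\alpha)=0$. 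Cleaner is to just prove the single matrix identity: the map sending the vector $(T(x^\gamma))_{|\gamma|\leq L}$ to $([[T]]_\alpha)_{|\alpha|\leq L}$ is given by a block-triangular matrix with ones on the diagonal, hence invertible over $\mathbb Z$, so one family vanishes iff the other does.

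The main obstacle, and the only place requiring genuine care, is the rigorous justification of exchanging the finite binomial sum with the $R\to\infty$ limit and, more subtly, confirming that all the intermediate pairings $\<T(y^{\alpha-\beta}),x^\beta\psi\>$ are defined with a uniform choice of $P$. This is because $T(y^{\alpha-\beta})$ is only an element of $\mathcal D_{2P}'$, so it can only be tested against functions in $\mathcal D_P$, and multiplying a test function $\psi\in\mathcal D_P$ by the polynomial $x^\beta$ does not keep it in $\mathcal D_P$ — it still lies in $\mathcal D$ (compactly supported, smooth) but loses vanishing moments. The fix is to observe that in Definition \ref{d:Tx} the pairing $\<T(x^\alpha),\psi\>$ only requires $\psi\in\mathcal D_P$, and the representation formula in the proof following Definition \ref{d:Tx} shows $T(x^\alpha)$, restricted away from a fixed neighborhood, is given by an honest locally integrable function; combined with $T$ mapping $\S_P$ into $\S'$, the pairing against $x^\beta\psi$ makes sense directly via the kernel representation once $\psi$ has enough vanishing moments to kill the singularity, which it does since $|\alpha-\beta|\leq L\leq\nu+M$ keeps us in the admissible range. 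So I would state the lemma's proof by: (i) fixing $P$ large; (ii) establishing the two binomial identities at the level of truncations; (iii) passing to the limit by dominated convergence as in the excerpt; (iv) invoking triangularity of the resulting integer matrix to conclude. The hard part is purely the bookkeeping of (ii)–(iii); there is no new analytic content beyond what the excerpt already supplies.
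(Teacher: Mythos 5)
Your proposal is correct and follows essentially the same route as the paper: the paper's proof is exactly the binomial expansion $\<[[T]]_\alpha,\psi\>=\sum_{\beta+\mu=\alpha}c_{\beta,\mu}\<T(x^\mu),x^\beta\psi\>$ followed by the (implicit) triangularity observation, with the caveat ``$P$ sufficiently large'' playing the role of your more carefully articulated fix for the loss of vanishing moments under multiplication by $x^\beta$. Your extra bookkeeping on the truncations and the induction on $|\alpha|$ for the converse only makes explicit what the paper's ``it immediately follows'' leaves to the reader.
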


\begin{proof}
This proof follows immediately from the following formula, which is just expanding the polynomial $(x-y)^\alpha$.  Let $|\alpha|\leq L$, $\eta_R\in\mathcal D_{2P}$ and $\psi\in\mathcal D_P$, for $P\in\N_0$ sufficiently larger, be as in the definition of $[[T]]_\alpha$, and we have
\begin{align*}
\<[[T]]_\alpha,\psi\>
&=\lim_{R\rightarrow\infty}\sum_{\beta+\mu=\alpha}c_{\beta,\mu}\int_{\R^{2n}}\mathcal K(x,y) y^\mu\eta_R(y)x^\beta\psi(x)dy\,dx=\sum_{\beta+\mu=\alpha}c_{\beta,\mu}\<T(x^\mu),x^\beta\psi\>.
\end{align*}
It immediately follows that $T(x^\alpha)$ vanishes for all $|\alpha|\leq L$ if and only if $[[T]]_\alpha$ does.
\end{proof}

\begin{lemma}\label{l:paraproducts}
Let $\nu\in\R$, $\alpha\in\N_0^n$, $b\in \dot B_\infty^{|\alpha|-\nu,\infty}$, and $\Pi_b^\alpha$ be defined as in \eqref{Pib}.  Then $[[\Pi_b^\alpha]]_\beta\in \dot B_\infty^{|\beta|-\nu,\infty}$ for all $\beta\in\N_0^n$, $\Pi_b^\alpha(x^\beta)=0$ for all $|\beta|\leq|\alpha|$ with $\beta\neq\alpha$, and $[[\Pi_b^\alpha]]_\alpha=b$.
\end{lemma}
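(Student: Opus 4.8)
The plan is to evaluate $[[\Pi_b^\alpha]]_\beta$ directly from the explicit formula for the kernel of $\Pi_b^\alpha$ obtained in the proof of Corollary~\ref{c:paraproducts},
\begin{align*}
\pi_b^\alpha(x,y)=\frac{(-1)^{|\alpha|}}{\alpha!}\sum_{k\in\Z}2^{k|\alpha|}\int_{\R^n}\widetilde\psi_k(x-u)\,Q_kb(u)\,(D^\alpha\varphi)_k(u-y)\,du,
\end{align*}
by pairing it against $(x-y)^\beta\eta_R(y)\psi(x)$ for $\psi\in\mathcal D_P$ with $P$ large, and then letting $R\to\infty$. First I would expand $(x-y)^\beta=\sum_{\rho\le\beta}\binom{\beta}{\rho}(x-u)^\rho(u-y)^{\beta-\rho}$, so that for each fixed $u$ the triple integral splits into an $x$-integral and a $y$-integral. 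Since $\widetilde\psi\in\S_\infty$, the function $\Psi^\rho(z)=\widetilde\psi(z)z^\rho$ again lies in $\S_\infty$, and the $x$-integral equals $(-1)^{|\rho|}2^{-k|\rho|}(\Psi^\rho)_k*\psi(u)$; the $y$-integral equals $2^{-k|\beta-\rho|}\int\Xi^{\beta-\rho}(w)\eta_R(u-2^{-k}w)\,dw$, where $\Xi^\gamma(z)=(D^\alpha\varphi)(z)z^\gamma\in L^1$, and by dominated convergence (using $\eta_R\to1$ pointwise and $\|\eta_R\|_{L^\infty}=\|\eta\|_{L^\infty}$) this tends to $2^{-k|\beta-\rho|}\int\Xi^{\beta-\rho}$ as $R\to\infty$. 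The arithmetic heart of the matter is that $\int\Xi^\gamma(z)\,dz=(-1)^{|\alpha|}\int\varphi(z)\,D^\alpha(z^\gamma)\,dz$ vanishes whenever $|\gamma|\le|\alpha|$ and $\gamma\neq\alpha$ (because then $D^\alpha z^\gamma\equiv0$) and equals $(-1)^{|\alpha|}\alpha!$ when $\gamma=\alpha$. Since $2^{k|\alpha|}2^{-k|\rho|}2^{-k|\beta-\rho|}=2^{k(|\alpha|-|\beta|)}$, collecting these pieces gives, in $\S_\infty'$,
\begin{align*}
[[\Pi_b^\alpha]]_\beta=\frac{(-1)^{|\alpha|}}{\alpha!}\sum_{\rho\le\beta}\binom{\beta}{\rho}(-1)^{|\rho|}\Bigl(\int_{\R^n}\Xi^{\beta-\rho}(z)\,dz\Bigr)\sum_{k\in\Z}2^{k(|\alpha|-|\beta|)}\,(\widetilde\Psi^\rho)_k*Q_kb,
\end{align*}
where $\widetilde\Psi^\rho(z)=\Psi^\rho(-z)\in\S_\infty$.

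From this identity all three assertions drop out. If $|\beta|<|\alpha|$, every exponent $\beta-\rho$ has $|\beta-\rho|<|\alpha|$, so $\int\Xi^{\beta-\rho}=0$ and $[[\Pi_b^\alpha]]_\beta=0$; the same holds when $|\beta|=|\alpha|$ and $\beta\neq\alpha$, since $\beta-\rho=\alpha$ is then impossible. When $\beta=\alpha$ only $\rho=0$ survives, and using $\Psi^0=\widetilde\psi$ together with the Calder\'on reproducing formula (Lemma~\ref{l:calderon}, which by commutativity of convolution also reads $\sum_k\widetilde Q_kQ_kb=b$) the surviving sum collapses to $b$; this is the third assertion. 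For $|\beta|>|\alpha|$, $[[\Pi_b^\alpha]]_\beta$ is a finite linear combination of functions $g_{\rho,\beta}=\sum_k2^{k(|\alpha|-|\beta|)}(\widetilde\Psi^\rho)_k*Q_kb$. Because $b\in\dot B_\infty^{|\alpha|-\nu,\infty}$ gives $\|Q_kb\|_{L^\infty}\lesssim2^{-k(|\alpha|-\nu)}$, one has $\|2^{k(|\alpha|-|\beta|)}(\widetilde\Psi^\rho)_k*Q_kb\|_{L^\infty}\lesssim2^{-k(|\beta|-\nu)}$, and a standard Littlewood--Paley almost-orthogonality estimate --- the pieces $(\widetilde\Psi^\rho)_k*Q_kb$ being frequency-localized near $2^k$ --- then shows that $g_{\rho,\beta}\in\dot B_\infty^{|\beta|-\nu,\infty}$ (its Littlewood--Paley piece at scale $2^j$ is $O(2^{-j(|\beta|-\nu)})$). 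Since $0$ and $b$ also lie in the relevant spaces for $|\beta|\le|\alpha|$, the first assertion follows for every $\beta$.

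It remains to prove the second assertion, $\Pi_b^\alpha(x^\beta)=0$ for $|\beta|\le|\alpha|$ with $\beta\neq\alpha$, which I would extract from the vanishing of $[[\Pi_b^\alpha]]_\gamma$ for $|\gamma|\le|\alpha|$, $\gamma\neq\alpha$ just established. Expanding $(x-y)^\beta=\sum_{\mu\le\beta}\binom{\beta}{\mu}(-1)^{|\beta-\mu|}x^\mu y^{\beta-\mu}$ exactly as in the proof of Lemma~\ref{l:TxEquiv} yields
\begin{align*}
\langle[[\Pi_b^\alpha]]_\beta,\psi\rangle=\sum_{\mu\le\beta}\binom{\beta}{\mu}(-1)^{|\beta-\mu|}\langle\Pi_b^\alpha(x^\mu),\,x^{\beta-\mu}\psi\rangle,
\end{align*}
valid for $\psi\in\mathcal D_P$ with $P$ large enough that each $x^{\beta-\mu}\psi$ still belongs to the test space on which $\Pi_b^\alpha(x^\mu)$ is defined. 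I then induct on $d=|\beta|$ in the range $0\le d\le|\alpha|$: assuming $\Pi_b^\alpha(x^\mu)=0$ for all $\mu$ with $|\mu|<d$, let $|\beta|=d$ with $\beta\neq\alpha$; every proper sub-index $\mu<\beta$ satisfies $|\mu|<d\le|\alpha|$, hence $\Pi_b^\alpha(x^\mu)=0$, so the identity collapses to $0=[[\Pi_b^\alpha]]_\beta=(-1)^{|\beta|}\Pi_b^\alpha(x^\beta)$, whence $\Pi_b^\alpha(x^\beta)=0$. The base case $d=0$ (assuming $|\alpha|\ge1$) is $\Pi_b^\alpha(1)=[[\Pi_b^\alpha]]_0=0$.

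The genuine difficulty lies not in this algebra but in the analytic bookkeeping: justifying the interchange of $\lim_{R\to\infty}$ with both the sum over scales $k\in\Z$ and the triple integral, and the convergence of all the resulting series in $\S_\infty'$. Both are driven by the infinitely many vanishing moments of $\widetilde\psi$ (hence of each $\Psi^\rho$) together with the Schwartz decay of the test functions, which force the relevant kernel pairings to decay rapidly both as $k\to+\infty$ and as $k\to-\infty$; producing the explicit dominating bounds for these tails, together with invoking the equivalence of the $\dot B_\infty^{s,\infty}$ norm of Section~\ref{Sect2} with the analogous Littlewood--Paley expression formed from the $\psi$ of Lemma~\ref{l:calderon} (needed both for $\|Q_kb\|_{L^\infty}\lesssim2^{-k(|\alpha|-\nu)}$ and for the almost-orthogonality step), is where the real work sits. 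None of this is deep, but it is the bulk of the proof.
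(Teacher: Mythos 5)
Your proposal is correct and follows essentially the same computation as the paper: the same kernel formula for $\pi_b^\alpha$, the same binomial split of $(x-y)^\beta$ into powers of $(x-u)$ and $(u-y)$, the same moment identity $\int (D^\alpha\varphi)(z)z^\gamma\,dz=(-1)^{|\alpha|}\int\varphi(z)\,D^\alpha(z^\gamma)\,dz$, the Calder\'on reproducing formula for $[[\Pi_b^\alpha]]_\alpha=b$, and the almost-orthogonality bound $\sum_k2^{-K|j-k|}2^{k(\nu-|\beta|)}\lesssim2^{(\nu-|\beta|)j}$ for the Besov membership. The one organizational difference is the second assertion: the paper proves $\Pi_b^\alpha(x^\beta)=0$ by a separate direct computation, integrating by parts to move $D^\alpha$ onto $\eta_R(y)y^\beta$ and using $D^\alpha(y^\beta)\equiv0$, whereas you deduce it from the vanishing of the non-convolution moments $[[\Pi_b^\alpha]]_\gamma$ via the expansion identity underlying Lemma \ref{l:TxEquiv} and an induction on $|\beta|$. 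Your induction is the right way to do this, since Lemma \ref{l:TxEquiv} as stated cannot be invoked wholesale here ($[[\Pi_b^\alpha]]_\alpha=b\neq0$, so the full set of moments up to order $|\alpha|$ does not vanish); the paper's direct route avoids this bookkeeping at the cost of one extra computation, while yours reuses the master formula you already derived. The convergence and limit-interchange issues you defer are exactly the ones the paper handles with the same vanishing-moment/almost-orthogonality estimates, so nothing is missing in substance.
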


\begin{proof}
For any $\beta\in\N_0^n$ and $\psi\in\mathcal D_P$ with $P$ sufficiently large, we have
\begin{align*}
|\psi_j*[[\Pi_b^\alpha]]_\beta(x)|&=\lim_{R\rightarrow\infty}\left|\int_{\R^{3n}}\pi_b^\alpha(u,y)(u-y)^\beta\eta_R(y)\psi_j^x(u)dy\,du\right|\\
&\leq\limsup_{R\rightarrow\infty}\sum_{\rho+\mu=\beta}c_{\rho,\mu}\sum_{k\in\Z}2^{k|\alpha|}\left|\int_{\R^{3n}}(D^\alpha\varphi)_k(v-y)(v-y)^\mu\eta_R(y)\right.\\
&\hspace{5.5cm}\left.\phantom{\int}\times Q_kb(v)\widetilde\psi_k(u-v)(u-v)^\rho  \psi_j^x(u)dv\,du\,dy\right|\\
&\leq\limsup_{R\rightarrow\infty}\sum_{\rho+\mu=\beta}c_{\rho,\mu}\sum_{k\in\Z}2^{k(|\alpha|-|\beta|)}\int_{\R^n}\(\int_{\R^n}|(D^\alpha\varphi)_k(y)(2^ky)^\mu\eta_R(y)|dy\)\\
&\hspace{5.5cm}\phantom{\int}\times |Q_kb(v)|\,|\widetilde\psi_k^\rho*\psi_j^x(v)|dv\\
&\less\|b\|_{\dot B_\infty^{|\alpha|-\nu,\infty}}\sum_{k\in\Z}2^{-K|j-k|}2^{k(\nu-|\beta|)}\less2^{(\nu-|\beta|)j}\|b\|_{\dot B_\infty^{|\alpha|-\nu,\infty}},
\end{align*}
where $\widetilde\psi_k^\rho(x)=\widetilde\psi_k(x)x^\rho$.  Then $[[\Pi_b^\alpha]]_\beta\in\dot B_\infty^{|\beta|-\nu,\infty}$ for all $\beta\in\N_0^n$.  For $\psi\in\mathcal D_P$, with $P\in\N_0^n$ fixed sufficiently large, we have
\begin{align*}
\<[[\Pi_b^\alpha]]_\alpha,\psi\>&=\lim_{R\rightarrow\infty}\int_{\R^{2n}}\pi_b^\alpha(x,y)(x-y)^\alpha\eta_R(y)\psi(x)dy\,dx\\
&=\lim_{R\rightarrow\infty}\frac{(-1)^{|\alpha|}}{\alpha!}\sum_{k\in\Z}2^{k|\alpha|}\int_{\R^{2n}}\(\int_{\R^n}(D^\alpha\varphi)_k(u-y)(x-y)^\alpha\eta_R(y)dy\)\\
&\hspace{5.5cm}\times\widetilde\psi_k(x-u)Q_kb(u) \psi(x)du\,dx\\
&=\sum_{k\in\Z}\int_{\R^{2n}}\widetilde\psi_k(x-u)Q_kb(u) \psi(x)du\,dx=\<b,\psi\>.
\end{align*}
Similarly, for $\psi\in\mathcal D_{P}$ with $P$ sufficiently large and $|\beta|\leq|\alpha|$ such that $\alpha\neq \beta$
\begin{align*}
\<\Pi_b^\alpha(x^\beta),\psi\>&=\lim_{R\rightarrow\infty}\frac{1}{\alpha!}\sum_{k\in\Z}\int_{\R^n}Q_kb(x) \(\int_{\R^n}\varphi_k(x-y) D^\alpha\(\eta_R(y) y^\beta\) dy\)\widetilde Q_k\psi(x)dx\\
&\hspace{0cm}=\frac{1}{\alpha!}\sum_{k\in\Z}\int_{\R^n}Q_kb(x)\(\int_{\R^n}\varphi_k(x-y)D^\alpha(y^\beta) dy\)\widetilde Q_k\psi(x)dx=0.
\end{align*}
Note that $|\beta|\leq|\alpha|$ and $\beta\neq\alpha$ implies that $D^\alpha(y^\beta)=0$.
\end{proof}

\begin{theorem}\label{t:Smooth+Oscillating}
Let $T\in SIO_\nu(M+\gamma)$ for some $M\in\N_0$ and $0<\gamma\leq1$.  If $[[T]]_\alpha\in \dot B_\infty^{|\alpha|-\nu,\infty}$ for all $|\alpha|\leq M$.  Then there exist operators $S\in SIO_\nu(M+\gamma)$ and $O\in SIO_\nu(\infty)$ such that $T=S+O$, $S(x^\alpha)=0$ for $|\alpha|\leq M$, and $O^*(x^\alpha)=0$ for $\alpha\in\N_0^n$.  Furthermore, if $T$ satisfies $WBP_\nu$, then both $S$ and $O$ satisfy $WBP_\nu$, in which case Theorem \ref{t:calculus}, Theorem \ref{t:BesovBounds}, Theorem \ref{t:TriebelLizorkinBounds}, Corollary \ref{c:dual1}, and Corollary \ref{c:dual2} can be applied to $S^*$ and $O$.
\end{theorem}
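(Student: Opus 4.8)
The plan is to correct the non-convolutional moments of $T$ by subtracting off a finite linear combination of the paraproducts $\Pi_b^\alpha$ from \eqref{Pib}, using that Lemma \ref{l:paraproducts} records exactly which moments each $\Pi_b^\alpha$ carries and that Corollary \ref{c:paraproducts} guarantees $\Pi_b^\alpha\in SIO_\nu(\infty)$, that $\Pi_b^\alpha$ satisfies $WBP_\nu$, and that $(\Pi_b^\alpha)^*(x^\beta)=0$ for every $\beta\in\N_0^n$. Concretely, I would take $O=\sum_{|\alpha|\leq M}\Pi_{b_\alpha}^\alpha$ for functions $b_\alpha\in\dot B_\infty^{|\alpha|-\nu,\infty}$ to be chosen below, and set $S=T-O$. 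Because $O\in SIO_\nu(\infty)\subset SIO_\nu(M+\gamma)$ while $T\in SIO_\nu(M+\gamma)$, and because the $SIO_\nu(M+\gamma)$ kernel conditions, the continuity of an operator and its transpose from $\S_P$ into $\S'$, and the $WBP_\nu$ estimate are all stable under finite sums, it follows immediately that $S\in SIO_\nu(M+\gamma)$, that $S$ and $S^*$ are continuous from $\S_P$ into $\S'$, and that $S$ and $O$ both satisfy $WBP_\nu$ whenever $T$ does. Likewise $O^*(x^\beta)=\sum_{|\alpha|\leq M}(\Pi_{b_\alpha}^\alpha)^*(x^\beta)=0$ for all $\beta\in\N_0^n$.

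The heart of the argument is to choose the $b_\alpha$ so that $[[O]]_\alpha=[[T]]_\alpha$ for every $|\alpha|\leq M$; once that is arranged, $[[S]]_\alpha=0$ for $|\alpha|\leq M$, and Lemma \ref{l:TxEquiv} upgrades this to $S(x^\alpha)=0$ for $|\alpha|\leq M$. The key structural fact is that the quantities $[[\Pi_b^\beta]]_\alpha$ are triangular in the order $|\alpha|$: expanding $(x-y)^\alpha$ exactly as in the proof of Lemma \ref{l:TxEquiv} gives $\<[[\Pi_b^\beta]]_\alpha,\psi\>=\sum_{\rho+\mu=\alpha}c_{\rho,\mu}\<\Pi_b^\beta(x^\mu),x^\rho\psi\>$, and since $\Pi_b^\beta(x^\mu)=0$ whenever $|\mu|\leq|\beta|$ with $\mu\neq\beta$ (Lemma \ref{l:paraproducts}), the right-hand side vanishes when $|\alpha|<|\beta|$, vanishes when $|\alpha|=|\beta|$ with $\alpha\neq\beta$, while $[[\Pi_b^\beta]]_\beta=b$ (again Lemma \ref{l:paraproducts}). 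I would therefore define $b_\alpha$ by induction on $|\alpha|=0,1,\dots,M$ via
\begin{equation*}
b_\alpha=[[T]]_\alpha-\sum_{|\beta|<|\alpha|}[[\Pi_{b_\beta}^\beta]]_\alpha,
\end{equation*}
where each summand belongs to $\dot B_\infty^{|\alpha|-\nu,\infty}$ — the term $[[T]]_\alpha$ by the hypothesis of the theorem, and each $[[\Pi_{b_\beta}^\beta]]_\alpha$ by Lemma \ref{l:paraproducts} combined with the inductive hypothesis $b_\beta\in\dot B_\infty^{|\beta|-\nu,\infty}$. Summing the triangular relations over $|\beta|\leq M$ then yields $[[O]]_\alpha=b_\alpha+\sum_{|\beta|<|\alpha|}[[\Pi_{b_\beta}^\beta]]_\alpha=[[T]]_\alpha$ for all $|\alpha|\leq M$, as required.

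Once $S(x^\alpha)=0$ for $|\alpha|\leq M$ and $O^*(x^\alpha)=0$ for all $\alpha$ are in place, the final assertion is essentially automatic. Indeed $S^*\in SIO_\nu(M+\gamma)$ satisfies $WBP_\nu$ and $(S^*)^*(x^\alpha)=S(x^\alpha)=0$ for $|\alpha|\leq M$, so Theorem \ref{t:calculus}, Theorem \ref{t:BesovBounds}, Theorem \ref{t:TriebelLizorkinBounds}, Corollary \ref{c:dual1}, and Corollary \ref{c:dual2} apply to $S^*$ once the parameters playing the role of $L,M,\gamma$ in those statements are matched to the cancellation order just produced; and since $O\in SIO_\nu(\infty)$ satisfies $WBP_\nu$ and $O^*(x^\alpha)=0$ for all $\alpha$, the same results apply to $O$ with the cancellation order taken arbitrarily large.

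The main obstacle is not any individual hard estimate but the triangular moment bookkeeping for the paraproducts $\Pi_b^\beta$ and the accompanying verification that the iterated corrections $b_\alpha$ stay in the correct class $\dot B_\infty^{|\alpha|-\nu,\infty}$ at every level of the induction; Lemma \ref{l:paraproducts} is doing the real work here. One must also check that the ranges of $\alpha$ for which $T(x^\alpha)$, $[[T]]_\alpha$, and the analogous quantities for $S$ and $O$ are defined all contain $\{\alpha:|\alpha|\leq M\}$, so that every moment identity invoked above makes sense.
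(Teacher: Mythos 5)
Your proposal is correct and follows essentially the same route as the paper: the same decomposition $O=\sum_{|\alpha|\leq M}\Pi_{b_\alpha}^\alpha$, $S=T-O$, the same inductive definition $b_\alpha=[[T]]_\alpha-\sum_{|\beta|<|\alpha|}[[\Pi_{b_\beta}^\beta]]_\alpha$, and the same appeals to Lemma \ref{l:TxEquiv}, Lemma \ref{l:paraproducts}, and Corollary \ref{c:paraproducts}. Your explicit verification of the triangularity of $[[\Pi_{b}^\beta]]_\alpha$ in $|\alpha|$ is a point the paper leaves implicit, but the argument is the same.
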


Note that the results from Sections \ref{Sect4} and \ref{Sect5} can be applied to $O$ regardless of whether $T$ satisfies $WBP_\nu$ since $O$ is a sum of paraproducts of the form \eqref{Pib} and by Corollary \ref{c:paraproducts} the results can be applied to each of these paraproducts.

\begin{proof}[of Theorem \ref{t:Smooth+Oscillating}]
Let $T$ be as above.  Define $b_0=[[T]]_0=T(1)\in\dot B_\infty^{-\nu,\infty}$.  For $1\leq |\alpha|\leq M$, define
\begin{align*}
b_\alpha=[[T]]_\alpha-\sum_{|\beta|<|\alpha|}[[\Pi_{b_\beta}^\beta]]_\alpha\in \dot B_\infty^{|\alpha|-\nu,\infty}.
\end{align*}
Also define
\begin{align*}
S=T-\sum_{|\alpha|\leq M}\Pi_{b_\alpha}^\alpha\;\;\;\text{and}\;\;\;O=\sum_{|\alpha|\leq M}\Pi_{b_\alpha}^\alpha.
\end{align*}
It immediately follows that $S\in SIO_\nu(M+\gamma)$ and $O\in SIO_\nu(\infty)$.  Using Lemmas \ref{l:TxEquiv} and \ref{l:paraproducts}, we also have
\begin{align*}
[[S]]_0&=[[T]]_0-\sum_{|\alpha|\leq M}[[\Pi_{b_\alpha}^\alpha]]_0=[[T]]_0-[[\Pi_{b_0}^0]]_0=0,
\end{align*}
and for $0<|\alpha|\leq M$ we have
\begin{align*}
[[S]]_\alpha&=[[T]]_\alpha-\sum_{|\beta|\leq M}[[\Pi_{b_\beta}^\beta]]_\alpha
=[[T]]_\alpha-b_\alpha-\sum_{|\beta|< |\alpha|}[[\Pi_{b_\beta}^\beta]]_\alpha=0.
\end{align*}
By Lemma \ref{l:TxEquiv}, it follows that $S(x^\alpha)=0$ for all $|\alpha|\leq M$.  It also follows from Corollary \ref{c:paraproducts} that $O^*(x^\alpha)=0$ for all $\alpha\in\N_0^n$.   Note also that $O$ always satisfies the $WBP_\nu$, and if $T$ satisfies $WBP_\nu$, then so does $S$.
\end{proof}

\begin{remark}
It may seem a little strange that we use the non-convolution moments $[[T]]_\alpha$, rather than $T(x^\alpha)$, in Lemma \ref{l:paraproducts} and Theorem \ref{t:Smooth+Oscillating}.  By Lemma \ref{l:paraproducts}, when using vanishing moment conditions the two are equivalent.  However, there is a crucial difference when the moments are not required to vanish, but instead some other conditions as we do in Theorem \ref{t:Smooth+Oscillating}.  This difference manifests in our setting when computing $[[\Pi_b^\alpha]]_\beta$ versus $\Pi_b^\alpha(x^\beta)$ for $|\beta|>|\alpha|$.  In Lemma \ref{l:paraproducts}, we showed that $[[\Pi_b^\alpha]]_\beta\in\dot B_\infty^{|\beta|-\nu,\infty}$, which is the natural condition to expect in this setting.  However, it may not be the case that $\Pi_b^\alpha(x^\beta)\in\dot B_\infty^{|\beta|-\nu,\infty}$.  To demonstrate this, let the dimension $n=1$, $\nu=0$, $\alpha=0$, $b\in \dot B_\infty^{0,\infty}$, and $\beta=1$.  Then for $x\in\R$
\begin{align*}
|\psi_j*\Pi_b^0(x^\beta)(x)|&=\lim_{R\rightarrow\infty}\left|\sum_{k\in\Z}\int_{\R^{2}}\varphi_k(u-y)y\eta_R(y)Q_kb(u)\widetilde\psi_k *\psi_j^x(u)du\,dy\right|\\
&\geq|x|\left|\sum_{k\in\Z}\int_{\R}Q_kb(u)\widetilde\psi_k *\psi_j^x(u)du\right|-|\psi_j*[[\Pi_b^0]]_1(x)|\\
&\geq|x|\,|\psi_j*b(x)|-|\psi_j*[[\Pi_b^0]]_1(x)|.
\end{align*}
Taking $j=0$, we have
\begin{align*}
\|\Pi_b^0(x)\|_{\dot B_\infty^{1,\infty}}&\geq|x|\,|\psi*b(x)|-\|[[\Pi_b^0]]_1\|_{\dot B_\infty^{1,\infty}}.
\end{align*}
By Lemma \ref{l:paraproducts}, $\|[[\Pi_b^0]]_1\|_{\dot B_\infty^{1,\infty}}<\infty$, and it is not hard to construct $b\in \dot B_\infty^{0,\infty}$ such that $|x|\,|\psi*b(x)|$ is unbounded (for example, $b(x)=\sin(x)$ would do).  Hence $\Pi_b^0(x)\notin\dot B_\infty^{1,\infty}$ for such $b$.  Similar constructions can be done in any dimension and for $b\in \dot B_\infty^{|\alpha|-\nu,\infty}$ to produce the property $\Pi_b^\alpha(x^\beta)\notin\dot B_\infty^{|\beta|-\nu}$
\end{remark}

\subsection{Sparse Domination}\label{Sect7.5}

There has been a lot of interest lately in sparse domination results for Calder\'on-Zygmund operators.  The first such result is due to Lerner \cite{L}, but there have been many extensions and improvements; see for example \cite{Moen,CR,BFP,CDO,Lacey,LA,LS,Z-K}.  However, there do not appear to be any results that apply to regularity estimates for operators or to hyper-singular operators.  There are some sparse estimates for fractional integral operators, for example in \cite{Moen}.

We will apply the sparse domination result from \cite{CR}, which can be summarized as follows.  For a collection of dyadic cubes $S$, define the dyadic operator
\begin{align*}
\mathcal A_Sf(x)=\sum_{Q\in S}\<f\>_Q\chi_Q(x),
\end{align*}
where $\<f\>_Q=\frac{1}{|Q|}\int_Qf(x)dx$.  The collection of dyadic cubes belonging to the same dyadic grid $S$ is sparse if for every $Q\in S$ there exists a measurable subset $E(Q)\subset Q$ with $|E(Q)|>|Q|/2$ and $E(Q)\cap E(Q')=\emptyset$ for ever $Q'\in S$ with $Q'\subsetneq Q$.  They prove that if $T\in CZO_0$ and $f$ is an integrable function supported in a cube $Q_0$, then there exist sparse dyadic collections $S_1,...,S_{3^n}$ (possibly associated to different dyadic grids) such that
\begin{align}
|Tf(x)|\less\sum_{i=1}^{3^n}\mathcal A_{S_i}(|f|)(x)
\end{align}
almost everywhere on $Q_0$.  We use this result to prove our next result.

\begin{corollary}\label{c:sparse}
Let $\nu\in\R$, $L$ be an integer with $L\geq|\nu|$, $M\geq\max(L,L-\nu)$, $(L-\nu)_*<\gamma\leq1$, and $T\in SIO_\nu(M+\gamma)$ satisfies $WBP_\nu$.
\begin{itemize}
\item Assume that $T^*(x^\alpha)=0$ for all $|\alpha|\leq L$.  Then for any $1<p<\infty$, $0<t<\widetilde L+\gamma$, cube $Q_0\subset\R^n$, and $f\in \dot W^{-t,p}$ with $\supp(|\nabla|^{-t}f)\subset Q_0$, there is a polynomial $P_f$ and sparse collections of dyadic cubes $S_1,...,S_{3^n}$ such that
\begin{align}\label{sparse1}
|\,|\nabla|^{-(\nu+t)}Tf(x)-P_f(x)|\less\sum_{i=1}^{3^n}\mathcal A_{S_i}(|\,|\nabla|^{-t}f|)(x)\;\;\;\text{a.e. }x\in Q_0.
\end{align}

\item Assume that $T(x^\alpha)=0$ for all $|\alpha|\leq L$.  Then for any $1<p<\infty$, $0<t<\widetilde L+\gamma$, cube $Q_0\subset\R^n$, and $f\in \dot W^{\nu +t,p}$ with $\supp(|\nabla|^{\nu+t}f)\subset Q_0$, there is a polynomial $P_f$ and sparse collections of dyadic cubes $S_1,...,S_{3^n}$ such that
\begin{align}\label{sparse2}
|\,|\nabla|^tTf(x)-P_f(x)|\less\sum_{i=1}^{3^n}\mathcal A_{S_i}(|\,|\nabla|^{\nu+t}f|)(x)\;\;\;\text{a.e. }x\in Q_0.
\end{align}
\end{itemize}
\end{corollary}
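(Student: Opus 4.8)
The plan is to derive both parts from the sparse domination theorem of \cite{CR} for operators in $CZO_0$, after using the restricted operator calculus of Theorem \ref{t:calculus} to move the fractional derivatives through $T$. For the first part, note that with $s=\nu+t$ the hypotheses of Theorem \ref{t:calculus} hold, since $\nu<\nu+t<\nu+\widetilde L+\gamma$ and $0<t<\widetilde L+\gamma$ are both equivalent to the stated restriction $0<t<\widetilde L+\gamma$ (here $\widetilde L=\lfloor L-\nu\rfloor$). Thus there is an operator $T_{\nu+t,t}\in CZO_0$ such that $|\nabla|^{-(\nu+t)}T|\nabla|^th-T_{\nu+t,t}h$ is a polynomial for every $h\in\S_\infty$. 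Since $|\nabla|^{-t}$ maps $\S_\infty$ into $\S_\infty$ and $|\nabla|^t|\nabla|^{-t}$ is the identity on $\S_\infty$, substituting $h=|\nabla|^{-t}f$ shows that $|\nabla|^{-(\nu+t)}Tf$ and $T_{\nu+t,t}(|\nabla|^{-t}f)$ differ by a polynomial for all $f\in\S_\infty$.

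Next I would upgrade this identity to all $f\in\dot W^{-t,p}$ by density. By Theorem \ref{t:TriebelLizorkinBounds} (with $q=2$ and trivial weight) $T$ maps $\dot W^{-t,p}=\dot F_p^{-t,2}$ boundedly into $\dot W^{-(\nu+t),p}$, so $f\mapsto|\nabla|^{-(\nu+t)}Tf$ is continuous from $\dot W^{-t,p}$ into $L^p$ modulo polynomials; likewise $f\mapsto T_{\nu+t,t}(|\nabla|^{-t}f)$ is continuous from $\dot W^{-t,p}$ into $L^p$, since $T_{\nu+t,t}\in CZO_0$ is $L^p$-bounded and $|\nabla|^{-t}\colon\dot W^{-t,p}\to L^p$ is an isometric isomorphism. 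As $\S_\infty$ is dense in $\dot W^{-t,p}$, the two maps agree modulo polynomials on all of $\dot W^{-t,p}$, so for each such $f$ there is a polynomial $P_f$ with $|\nabla|^{-(\nu+t)}Tf-P_f=T_{\nu+t,t}(|\nabla|^{-t}f)$ as an honest $L^p$ function. Finally, if in addition $g:=|\nabla|^{-t}f\in L^p$ is supported in $Q_0$, then $g\in L^1$, and the theorem of \cite{CR} applied to $T_{\nu+t,t}\in CZO_0$ yields sparse collections $S_1,\dots,S_{3^n}$ with $|T_{\nu+t,t}g(x)|\less\sum_{i=1}^{3^n}\mathcal A_{S_i}(|g|)(x)$ for a.e.\ $x\in Q_0$, which is \eqref{sparse1}.

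For the second part I would run the same argument on the transpose. Because $SIO_\nu(M+\gamma)$ and $WBP_\nu$ are stable under transposition, $T^*$ satisfies the standing hypotheses, and $T(x^\alpha)=0$ for $|\alpha|\le L$ is the same as $(T^*)^*(x^\alpha)=0$ for $|\alpha|\le L$. Applying Theorem \ref{t:calculus} to $T^*$ with $s=\nu+t$ gives $S\in CZO_0$ with $|\nabla|^{-(\nu+t)}T^*|\nabla|^th-Sh$ a polynomial for $h\in\S_\infty$; transposing and using that $CZO_0$ is closed under transpose (see the Remark following the definition of $CZO_\nu$) yields $S^*\in CZO_0$ with $|\nabla|^tT|\nabla|^{-(\nu+t)}h-S^*h$ a polynomial for $h\in\S_\infty$. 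Substituting $h=|\nabla|^{\nu+t}f$ and extending by density exactly as above---now using Corollary \ref{c:dual1}, applied with $T^*$ in place of $T$, to conclude that $T$ maps $\dot W^{\nu+t,p}$ boundedly into $\dot W^{t,p}$ so that $f\mapsto|\nabla|^tTf$ is continuous from $\dot W^{\nu+t,p}$ into $L^p$ modulo polynomials---produces a polynomial $P_f$ with $|\nabla|^tTf-P_f=S^*(|\nabla|^{\nu+t}f)$ for all $f\in\dot W^{\nu+t,p}$. When $|\nabla|^{\nu+t}f\in L^p$ is supported in $Q_0$ it is integrable, and \cite{CR} applied to $S^*$ gives \eqref{sparse2}.

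The index bookkeeping and the density extensions are routine; the step requiring the most care is the interaction between the ``modulo polynomials'' ambiguity built into Theorem \ref{t:calculus} and the pointwise a.e.\ nature of the sparse estimate. One must verify that the $L^p$ representative singled out by $T_{\nu+t,t}$ (respectively $S^*$) is the one relative to which subtracting the polynomial $P_f$ renders \eqref{sparse1} (respectively \eqref{sparse2}) meaningful, and that the right-hand sides $\mathcal A_{S_i}(|\,|\nabla|^{-t}f|)$, $\mathcal A_{S_i}(|\,|\nabla|^{\nu+t}f|)$ are finite a.e.\ on $Q_0$, which is exactly the integrability used to invoke \cite{CR}. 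A secondary, purely bookkeeping point in the second part is confirming that all hypotheses of Theorems \ref{t:calculus} and \ref{t:TriebelLizorkinBounds} and Corollary \ref{c:dual1} are inherited by $T^*$, which is immediate from the stated symmetry of $SIO_\nu$ and $WBP_\nu$ under transposition.
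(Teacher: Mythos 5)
Your proposal is correct and follows essentially the same route as the paper's proof: apply Theorem \ref{t:calculus} with $s=\nu+t$ to produce $T_t\in CZO_0$, identify $|\nabla|^{-(\nu+t)}Tf-P_f$ with $T_t(|\nabla|^{-t}f)$ for $f\in\dot W^{-t,p}$ by density, and then invoke the pointwise sparse bound of \cite{CR}. Your treatment of the second bullet via the transpose (which the paper dismisses with ``can be proved in the same way'') is the right way to handle the fact that $|\nabla|^tT|\nabla|^{-(\nu+t)}$ does not directly fit the index restrictions of Theorem \ref{t:calculus}.
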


\begin{proof}
Let $\nu\in\R$, $L$ be an integer with $L\geq|\nu|$, $M\geq\max(L,L-\nu)$, $(L-\nu)_*<\gamma\leq1$, and $T\in SIO_\nu(M+\gamma)$ satisfy $WBP_\nu$.  Assume that $T^*(x^\alpha)=0$ for all $|\alpha|\leq L$.  By Theorem \ref{t:calculus}, for any $0<t<L+\gamma$ there exists $T_t\in CZO_0$ such that $P_f=|\nabla|^{-(\nu+t)}T|\nabla|^tTf-T_tf$ is a polynomial for all $f\in \S_\infty$.  Note that $T_t\in CZO_0$ implies that $T_t$ is already well-defined on $L^p$ for $1<p<\infty$.  Hence $T_t(|\nabla|^{-t}f)$ is well-defined and belongs to $L^p$ for any $f\in \dot W^{-t,p}$.  It also follows that $|\nabla|^{-(\nu+t)}Tf=T_t(|\nabla|^{-t}f)$ for all $f\in\S_\infty$, and $\|\,|\nabla|^{-(\nu+t)}Tf-P_f\|_{L^p}=\|T_t(|\nabla|^{-t}f)\|_{L^p}\less\|f\|_{\dot W^{-t,p}}$.  Then $|\nabla|^{-(\nu+t)}Tf$ can be defined pointwise for every $f\in \dot W^{-t,p}$ with $1<p<\infty$ via the equation $|\nabla|^{-(\nu+t)}Tf=T_t(|\nabla|^{-t}f)+P_f$.  Now it is just a matter of applying the pointwise sparse operator estimate from \cite{CR}.  Let $f\in \dot W^{-t,p}$ with $\supp(|\nabla|^{-t}f)\subset Q_0$, and since $T_t\in CZO_0$ it follows that
\begin{align*}
|\,|\nabla|^{-(\nu+t)}Tf(x)-P_f(x)|&=|\,T_t(|\nabla|^{-t}f)(x)|\less\sum_{i=1}^{3^n}\mathcal A_{S_i}(|\,|\nabla|^{-t}f|)(x).
\end{align*}
The estimate in \eqref{sparse2} can be proved in the same way.
\end{proof}

\subsection{Operator Calculus}\label{Sect7.6}

Throughout this article, we have been working with a restricted operator calculus, where we only consider compositions of the form $|\nabla|^{-s}T|\nabla|^t$.  In this application, we construct a true operator calculus (or operator algebra) made up of singular integrals with different singularities.  In order to make our notation and computations a little simpler here, we will only work with operators in $SIO_\nu(\infty)$ that satisfy $T(x^\alpha)=T^*(x^\alpha)=0$ for all $\alpha\in\N_0^n$.  These assumption are necessary for some of the algebras we construct, but not for all.  Before we continue, we will need some additional information about the operators we defined in Corollary \ref{c:ao}, which we provide in the next lemma.

\begin{lemma}\label{l:lambdamoment}
Let $\nu\in\R$ and $T\in SIO_\nu(\infty)$.  Assume that $T(x^\alpha)=T^*(x^\alpha)=0$ for all $\alpha\in\N_0^n$ and $T\in WBP_\nu$.  Fix $L\in\N_0$, $\psi,\tilde\psi\in\mathcal D_P$ for $P$ sufficiently large, and define $\lambda_{j,k}(x,y)=\<T^*\psi_j^x,\tilde\psi_k^y\>$ for $j,k\in\Z$ and $x,y\in\R^n$.  Then
\begin{align}
\int_{\R^n}\lambda_{j,k}(x,y)x^\alpha dx=\int_{\R^n}\lambda_{j,k}(x,y)y^\alpha dx=0\label{lambdamoment}
\end{align}
for all $|\alpha|\leq L$.
\end{lemma}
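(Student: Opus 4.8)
The plan is to reduce the vanishing of the moments $\int \lambda_{j,k}(x,y)x^\alpha\,dx$ and $\int\lambda_{j,k}(x,y)y^\alpha\,dx$ to the hypotheses $T(x^\alpha)=0$ and $T^*(x^\alpha)=0$ by recognizing each moment integral as a suitably truncated pairing of $T$ (or $T^*$) against a monomial. First I would record the decay already available: by Corollary \ref{c:ao}, $|\lambda_{j,k}(x,y)|\lesssim 2^{\nu j}2^{(\widetilde L+\gamma')\min(0,j-k)}\Phi_{\min(j,k)}^{n+\nu+M+\gamma}(x-y)$ with $\widetilde L$ as large as we like (since $T\in SIO_\nu(\infty)$), and similarly its $x$- and $y$-derivatives decay. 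In particular, for any fixed $\alpha$ and any fixed $y$, the function $x\mapsto \lambda_{j,k}(x,y)x^\alpha$ is integrable, so the integrals in \eqref{lambdamoment} make sense; the same holds with the roles of $x,y$ swapped. This integrability is what lets me pass monomials and cutoffs through the dual pairings below.

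Next, for the first integral, I would fix $y$ and write $\int_{\R^n}\lambda_{j,k}(x,y)x^\alpha\,dx = \lim_{R\to\infty}\int_{\R^n}\<T^*\psi_j^x,\tilde\psi_k^y\>\,x^\alpha\eta_R(x)\,dx$, using dominated convergence justified by the decay above. Then I would unwind the inner pairing: $\<T^*\psi_j^x,\tilde\psi_k^y\> = \<\psi_j^x, T\tilde\psi_k^y\>$ (using that $\tilde\psi\in\mathcal D_P$ and $T,T^*$ are continuous on $\S_P$), and since $\psi$ is radial, $\int \psi_j(x-u)\,x^\alpha\eta_R(x)\,dx$ is a smooth compactly supported modification of the monomial $u^\alpha$. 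More precisely, $\int\psi_j^x(u)\,x^\alpha\eta_R(x)\,dx = (\psi_j * (x^\alpha\eta_R))(u)$, which for $R$ large and $u$ in the (compact) effective support of $T\tilde\psi_k^y$ equals $J$-jet-type expression converging to $u^\alpha$ as $R\to\infty$ (here one uses $\psi\in\mathcal D_P$, i.e. $\psi$ has many vanishing moments, so convolution with $\psi_j$ reproduces polynomials up to degree $P$). Interchanging the $x$-integral with the $T$-pairing (legitimate by Fubini for the kernel representation on disjoint supports plus the $WBP_\nu$/jet argument already used to define $T(x^\alpha)$), the limit becomes $\<T(x^\alpha),\tilde\psi_k^y\>=0$ by hypothesis. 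The integral $\int\lambda_{j,k}(x,y)y^\alpha\,dx$ is handled symmetrically: now I integrate in $x$ first to obtain $\int T^*\psi_j^x(v)\,dx$ paired against $\tilde\psi_k^y(v)v^{\text{(stuff)}}$; the radiality of $\psi$ and the vanishing moments give $\int\psi_j^x\,dx$ reproducing constants, reducing the claim to $\langle T^*(1),\ldots\rangle$-type expressions, hence to $T^*(x^\beta)=0$ — actually here one must be slightly more careful and expand $y^\alpha$ is already fixed, so this integral is literally $y^\alpha\int\lambda_{j,k}(x,y)\,dx$, and $\int\lambda_{j,k}(x,y)\,dx = \lim_R \langle T(\eta_R),\tilde\psi_k^y\rangle$ after the same unwinding, which vanishes since $T(1)=0$.

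The main obstacle I anticipate is the justification of the interchanges of integration and dual pairing, together with the limit $R\to\infty$, when the cutoffs $\eta_R$ are present. This is exactly the technical heart of the definition of $T(x^\alpha)$ and $[[T]]_\alpha$ in Definition \ref{d:Tx}, and the cleanest route is to mimic the bootstrap argument given right after Definition \ref{d:Tx}: split $\eta_R = \eta_{R_0} + (\eta_R-\eta_{R_0})$, handle the $\eta_{R_0}$ piece by continuity of $T$ on $\S_P$ (since $x^\alpha\eta_{R_0}\in\mathcal D_P$ after convolving with $\psi_j$, which has enough vanishing moments), and handle the tail $(\eta_R-\eta_{R_0})$ piece by the kernel representation on disjoint supports, using the decay $|\lambda_{j,k}(x,y)|\lesssim 2^{\nu j}\Phi_{\min(j,k)}^{N}(x-y)$ with $N$ large to get an absolutely convergent integral independent of $R$, so that dominated convergence applies. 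Once these interchanges are in place, the identities \eqref{lambdamoment} drop out immediately from $T(x^\alpha)=T^*(x^\alpha)=0$, with the subtlety that I must treat all $\beta$ with $|\beta|\leq|\alpha|$ (arising from expanding binomially when the monomial sits on the "wrong" side of the convolution), but all of these vanish by hypothesis since the hypothesis is assumed for all multi-indices.
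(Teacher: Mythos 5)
Your overall architecture (truncate the monomial with $\eta_R$, push it through the pairing onto the bump $\psi_j$, and control the tail with the kernel representation) is close to what the paper does, but the key step rests on a false premise: you assert that convolution with $\psi_j$ ``reproduces polynomials up to degree $P$'' because $\psi\in\mathcal D_P$. In this paper $\mathcal D_P$ consists of functions with \emph{vanishing} moments up to order $P$, so $\int\psi_j(u-x)x^\beta\,dx=0$ for $|\beta|\leq P$: convolution with $\psi_j$ \emph{annihilates} polynomials rather than reproducing them. Consequently the limit of $\langle T\tilde\psi_k^y,\psi_j*(x^\alpha\eta_R)\rangle$ is not $\langle T(x^\alpha),\tilde\psi_k^y\rangle$, and the hypotheses $T(x^\alpha)=T^*(x^\alpha)=0$ are not what makes these moments vanish. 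What is true --- and what the paper exploits --- is that $\psi_j*(x^\alpha\eta_R)$ vanishes identically on $B(0,R-C2^{-j})$ (there $\eta_R\equiv1$ and the vanishing moments of $\psi_j$ kill the convolution) and also outside $B(0,2R+C2^{-j})$, so it is supported in an annulus escaping to infinity. The paper's proof introduces exactly this object, $F_{\alpha,R,k}(u)=\int_{|y|<R}\psi_k(y-u)y^\alpha dy$, observes its annular support, and then estimates $\langle T\psi_j^x,F_{\alpha,R,k}\rangle$ directly from the kernel representation with a jet subtraction $K(u,v)-J_x^M[K(\cdot,v)](u)$, obtaining a bound of order $R^{-(\nu+M+\gamma-|\alpha|)}\to0$ once $M$ is taken large (possible since $T\in SIO_\nu(\infty)$). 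That tail estimate is the whole content of the proof; it is not the routine bootstrap from Definition \ref{d:Tx} that you defer to, because here the test function itself runs off to infinity rather than staying fixed while only the cutoff grows.

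Two smaller points. First, even granting your (incorrect) reproduction step, the identification of the relevant hypothesis is swapped: the $x$-variable of $\lambda_{j,k}$ is the output variable of $T$, so that route would terminate at $\langle T^*(x^\alpha),\tilde\psi_k^y\rangle$, not $\langle T(x^\alpha),\tilde\psi_k^y\rangle$. Second, the second integral in \eqref{lambdamoment} is evidently a typo for $\int\lambda_{j,k}(x,y)y^\alpha\,dy$ (this is what the paper's proof computes, then invokes symmetry); reading it literally as $y^\alpha\int\lambda_{j,k}(x,y)\,dx$ only recovers the $\alpha=0$ case of the first identity. None of this is fatal --- the conclusion is correct and the repair is short --- but as written the central step would not go through.
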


\begin{proof}
Let $L\in\N_0$, $|\alpha|\leq L$, and $\eta\in \mathcal D_P$ with $\eta=1$ on $B(0,1)$ and $\eta_R(x)=\eta(x/R)$, where $P$ is chosen sufficiently large.  By Theorem \ref{t:truncation}, we have
\begin{align*}
\int_{\R^n}\lambda_{j,k}(x,y)y^\alpha dy&=\int_{\R^n}\<T\psi_j^x,\psi_k^y\>y^\alpha dy=\lim_{R\rightarrow\infty}\<T\psi_j^x,F_{\alpha,R,k}\>,
\end{align*}
where
\begin{align*}
F_{\alpha,R,k}(u)=\int_{|y|<R}\psi_k(y-u)y^\alpha dy.
\end{align*}
It follows that $\supp(F_{\alpha,R,k})\subset B(0,R+2^{k+1})\backslash B(0,R-2^{k+1})$ and $F_{\alpha,R,k}\in\mathcal D_P$.  Then as long as $R>4|x|+2^{k+4}+2^{j+4}$, it follows that
\begin{align*}
|\<T\psi_j^x,F_{\alpha,R,k}\>|
&=\left|\int_{\R^{2n}}(K(u,v)-J_x^M[K(\cdot,v)](u))\psi_j^x(u)F_{\alpha,R,k}(v)du\,dv\right|\\
&\less\int_{\R^{2n}}\frac{|x-u|^{M+\gamma}}{|u-v|^{n+\nu+M+\gamma}}|\psi_j^x(u)F_{\alpha,R,k}(v)|du\,dv\\
&\less\frac{2^{(M+\gamma)j}}{R^{n+\nu+M+\gamma}}\int_{|y|<R}\(\int_{\R^{2n}}|\psi_j^x(u)\psi_k(v) |du\,dv\)|y^\alpha|dy\\
&\less\frac{2^{(M+\gamma)j}}{R^{n+\nu+M+\gamma}}R^{n+|\alpha|}=2^{(M+\gamma)j}R^{-(\nu+M+\gamma-|\alpha|)},
\end{align*}
which tends to zero as $R\rightarrow\infty$.  Here we take $M>L+|\nu|+\gamma$ and $P>M$; note that $P$ then depends on $L$, and so we cannot completely remove the restriction $|\alpha|\leq L$ in the statement of Lemma \ref{l:lambdamoment}.  Therefore the first integral condition in \eqref{lambdamoment} holds for $|\alpha|\leq L$, and by symmetry the same holds for the second one.
\end{proof}

\begin{theorem}\label{t:operatoralgebra}
Let $V\subset\R$ be a set that is closed under addition.  Then the collection of operators
\begin{align*}
\mathfrak A_V=\{T\in SIO_\nu(\infty):\nu\in V,\;T(x^\alpha)=T^*(x^\alpha)=0,\,T\in WBP_\nu\}
\end{align*}
is an operator algebra in the sense that it is closed under composition and transpose.
\end{theorem}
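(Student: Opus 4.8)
The plan is to split the statement into closure under transposition and closure under composition. The transposition part is immediate: each of the three conditions defining membership in $\mathfrak A_V$ --- ``$T\in SIO_\nu(\infty)$'', ``$T\in WBP_\nu$'', and ``$T(x^\alpha)=T^*(x^\alpha)=0$ for all $\alpha$'' --- is symmetric in $T$ and $T^*$, so $T\in\mathfrak A_V$ of order $\nu$ forces $T^*\in\mathfrak A_V$ of the same order $\nu\in V$. The content is therefore: if $T_1\in SIO_{\nu_1}(\infty)$ and $T_2\in SIO_{\nu_2}(\infty)$ both lie in $\mathfrak A_V$, then so does $T_1T_2$. Since $V$ is closed under addition we have $\nu:=\nu_1+\nu_2\in V$, and it remains to verify that $T_1T_2\in SIO_\nu(\infty)$, that $T_1T_2\in WBP_\nu$, and that $(T_1T_2)(x^\alpha)=(T_1T_2)^*(x^\alpha)=0$ for every $\alpha\in\N_0^n$.

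I would first put the composition into a usable form, imitating the proof of Theorem~\ref{t:1sidecalc}. Taking $\psi\in\mathcal D_P$ and $\widetilde\psi\in\S_\infty$ as in Lemma~\ref{l:calderon}, inserting the resolution $I=\sum\widetilde Q_jQ_j$ to the left of $T_1$, two resolutions $I=\sum Q_k\widetilde Q_k$ and $I=\sum\widetilde Q_\ell Q_\ell$ between $T_1$ and $T_2$, and $I=\sum Q_m\widetilde Q_m$ to the right of $T_2$, one is led to the formula
\begin{align*}
\<T_1T_2f,g\>=\sum_{j,k,\ell,m\in\Z}\int_{\R^{6n}}\widetilde\psi_j(x-u)\,\lambda^{(1)}_{j,k}(u,v)\,(\widetilde\psi_k*\widetilde\psi_\ell)(v-w)\,\lambda^{(2)}_{\ell,m}(w,w')\,\widetilde\psi_m(w'-z)\,g(x)f(z)\,du\,dv\,dw\,dw'\,dx\,dz,
\end{align*}
where $\lambda^{(i)}_{j,k}(x,y)=\<T_i^*\psi_j^x,\psi_k^y\>$ are the coefficients of Corollary~\ref{c:ao}. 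Because in every resolution both Littlewood--Paley functions adjacent to an operator are the compactly supported $\psi$ --- the Schwartz functions appearing only in the outer convolutions and in the harmless product $\widetilde\psi_k*\widetilde\psi_\ell$ --- every coefficient above is of the form covered by Corollary~\ref{c:ao}. One checks (for $P$ large, using the estimates of the next paragraph together with the high-frequency decay of $\widetilde Q_mf,\widetilde Q_jg$ and the low-order vanishing moments of $f,g\in\S_P$) that this multiple sum converges absolutely for $f,g\in\S_P$, is independent of the resolutions chosen, agrees with the ordinary composition where the latter is defined, and that both $T_1T_2$ and $(T_1T_2)^*=T_2^*T_1^*$ are thereby continuous from $\S_P$ into $\S'$.

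Next I would extract the kernel estimates. By Corollary~\ref{c:ao}, since $T_i^*(x^\alpha)=0$ for every $\alpha$ and $T_i\in SIO_{\nu_i}(\infty)$ --- so that the parameters $\widetilde L$, $M$, and the H\"older exponent may all be taken arbitrarily large --- each $\lambda^{(i)}_{j,k}$ obeys, for every $N$ and every pair of multi-indices,
\begin{align*}
|D^\alpha_xD^\beta_y\lambda^{(i)}_{j,k}(x,y)|\less2^{(\nu_i+|\alpha|)j+|\beta|k}\,2^{(\widetilde L+\gamma\,')\min(0,j-k)}\,\Phi^N_{\min(j,k)}(x-y)
\end{align*}
together with the corresponding H\"older bounds, while $|\widetilde\psi_k*\widetilde\psi_\ell(x)|\less2^{-K|k-\ell|}\Phi^N_{\min(k,\ell)}(x)$ for any $K$. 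Carrying out the four inner integrations with $\Phi^N_a*\Phi^N_b\less\Phi^N_{\min(a,b)}$, then summing in $k$ and $\ell$ (using large $K$) and along the off-diagonal directions (using large $\widetilde L$ --- this is exactly where the vanishing moments of all orders are consumed), the kernel $K_{T_1T_2}$ of $T_1T_2$ satisfies
\begin{align*}
|D^\alpha_xD^\beta_zK_{T_1T_2}(x,z)|\less\sum_{a\in\Z}2^{(\nu+|\alpha|+|\beta|)a}\Phi^N_a(x-z)\less\frac{1}{|x-z|^{n+\nu+|\alpha|+|\beta|}}
\end{align*}
by Lemma~\ref{l:kernelsum}, and the analogous H\"older-regularity bounds are obtained in the same way. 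As $|\alpha|$, $|\beta|$, and the H\"older exponent are unrestricted, $T_1T_2\in SIO_\nu(\infty)$; the kernel representation against test functions of disjoint support comes from the same absolutely convergent expansion.

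For $WBP_\nu$, one evaluates the expansion on normalized bumps $\psi\in\mathcal D_M$ and $\varphi\in C_0^\infty$ supported in a ball $B$ of radius $r$: the outer factors $\widetilde Q_m\psi$ and $\widetilde Q_j\varphi$ now furnish the scale cutoffs, and the same geometric summation gives $|\<T_1T_2\psi,\varphi\>|\less r^{\,n-\nu}=|B|^{1-\nu/n}$, and symmetrically for $(T_1T_2)^*$; thus $T_1T_2\in WBP_\nu$. For the moment conditions, one applies the expansion with $x^\alpha\eta_R$ in place of $f$ and lets $R\to\infty$: the innermost factor $\widetilde Q_m(x^\alpha\eta_R)=\widetilde\psi_m*(x^\alpha\eta_R)$ tends to $\widetilde\psi_m*x^\alpha=0$ (as $\widetilde\psi\in\S_\infty$ annihilates polynomials), while the cancellation $\int\lambda^{(2)}_{\ell,m}(\,\cdot\,,y)\,y^\beta\,dy=0$ for $|\beta|\le L$ supplied by Lemma~\ref{l:lambdamoment} controls the truncated sums uniformly in $R$; hence $\<(T_1T_2)(x^\alpha),\psi\>=\lim_{R\to\infty}\<(T_1T_2)(x^\alpha\eta_R),\psi\>=0$, and symmetrically, using the cancellation of $\lambda^{(1)}$ in its first variable, $(T_1T_2)^*(x^\alpha)=(T_2^*T_1^*)(x^\alpha)=0$. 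This gives $T_1T_2\in\mathfrak A_V$. The main obstacle throughout is the simultaneous convergence of the multiple sum for kernel derivatives of arbitrary order and H\"older exponents up to $1$: it is precisely here that one needs $T_1,T_2$ to lie in $SIO_{\nu_i}(\infty)$ with cancellation of all orders, so that the off-diagonal gains $2^{(\widetilde L+\gamma\,')\min(0,\,\cdot\,)}$ outrun the growth $2^{(|\alpha|+|\beta|)\,\cdot\,}$ produced by differentiation --- with only finitely many moments each composition would forfeit a fixed amount of smoothness and cancellation and the class would not close up.
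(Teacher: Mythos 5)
Your outline matches the paper's proof in its core mechanism --- insert Littlewood--Paley resolutions around and between the two operators, bound the resulting coefficients via Corollary \ref{c:ao}, and sum with Lemma \ref{l:kernelsum} --- but it diverges on three of the required verifications, and in each case the paper's route is the lighter one. For continuity from $\S_P$ into $\S'$ and for $WBP_{\nu_1+\nu_2}$, the paper does not touch the expansion at all: it writes $\<S\circ Tf,g\>=\<Tf,S^*g\>$ and bounds this by $\|Tf\|_{\dot W^{-\mu,p}}\|S^*g\|_{\dot W^{\mu,p'}}$ with $\mu>\max(\nu_1,-\nu_2)$, invoking the mapping properties already established in Theorem \ref{t:TriebelLizorkinBounds} and Corollary \ref{c:dual1}. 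This simultaneously makes the composition well-defined (your clause ``agrees with the ordinary composition where the latter is defined'' is otherwise left hanging, since a priori $T_2f$ is only a tempered distribution and not in the domain of $T_1$) and yields the weak boundedness property. For the vanishing moments of the composition, the paper again avoids a direct computation: it shows from the expansion that $S\circ T$ maps $\dot W^{\nu_1+\nu_2+t,p}$ to $\dot W^{t,p}$ for every $t\in\R$ and $1<p<\infty$, concludes $S\circ T\in CZO_{\nu_1+\nu_2}(\infty)$, and then cites Theorem \ref{t:T1sufficient} (and its transpose) to obtain $(S\circ T)(x^\alpha)=(S\circ T)^*(x^\alpha)=0$. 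Your direct limit argument with $x^\alpha\eta_R$ can likely be made to work, but the uniform-in-$R$ control you gesture at is exactly the delicate point, and the detour through the $T1$-type theorem is what the machinery of Sections \ref{Sect5}--\ref{Sect6} was built to supply.

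The one substantive soft spot is the summability of your quadruple series. The estimate you quote from Corollary \ref{c:ao} carries the almost-orthogonality factor $2^{(\widetilde L+\gamma\,')\min(0,j-k)}$, which decays only when $j<k$; in the regime $j\gg k=\ell=m$ your displayed bounds give a contribution of order $\sum_{j>k}2^{\nu_1 j}$ with no compensating decay, which diverges whenever $\nu_1\geq0$ (and likewise for $\ell\gg m$). The missing ingredient is the transposed estimate: because $T_i(x^\alpha)=0$ as well, Corollary \ref{c:ao} applies to $T_i^*$ and gives the symmetric bound with the roles of the two scales interchanged, and this --- together with the vanishing moments of $\lambda_{j,k}$ in \emph{both} variables supplied by Lemma \ref{l:lambdamoment} --- is how the paper reaches the two-sided factor $2^{-K\max(|j-k|,|k-\ell|,|\ell-m|)}$ in its bound for $\widetilde\omega_{j,k,\ell,m}$. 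Your parenthetical that the vanishing moments of all orders are ``consumed'' in the off-diagonal sums points in the right direction, but as written the estimates you display do not close the sum; you should state and use the two-sided decay explicitly, citing Lemma \ref{l:lambdamoment} at that stage rather than only in the final moment computation.
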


\begin{proof}
Fix two real numbers $\nu_1,\nu_2\in V$.  In order to show $S,T \in\mathfrak A_V$ implies $S\circ T\in SIO_{\nu_1+\nu_2}(\infty)$ where $T\in SIO_{\nu_1}$ and $S\in SIO_{\nu_2}$, we must first show that $S\circ T$ and $(S\circ T)^*$ are defined as (or can be extended to) operators from $\S_P$ into $\S'$ for some $P\in\N_0$ sufficiently large.  We first note that the vanishing moment and weak boundedness properties of operators in $\mathfrak A_V$ imply that all members of $\mathfrak A_V\cap SIO_\nu$ are bounded from $\dot W^{\nu+s,p}$ into $\dot W^{s,p}$ for all $s\in\R$ such that $s<-\nu$ and for $s>0$ by Theorem \ref{t:TriebelLizorkinBounds} and Corollary \ref{c:dual1}, respectively.  For $f\in\S_P$ and $g\in \S$ (with $P\geq|\nu_1|+|\nu_2|$), we note that for any $1<p<\infty$
\begin{align*}
|\<S\circ Tf,g\>|&=|\<Tf,S^*g\>|\leq\|Tf\|_{\dot W^{-\mu,p}}\|S^*g\|_{\dot W^{\mu,p'}}\\
&\leq\|T\|_{\dot W^{\nu_1-\mu,p}\to\dot W^{-\mu,p}}\|S^*\|_{\dot W^{\mu,p'}\to\dot W^{\nu_2+\mu,p'}}\|f\|_{\dot W^{\nu_1-\mu,p}}\|g\|_{\dot W^{\nu_2+\mu,p'}},
\end{align*}
where $\mu>\max(\nu_1,-\nu_2)$.  Note that $T$ is bounded from $\dot W^{\nu_1-\mu,p}$ into $\dot W^{-\mu,p}$ since $-\mu<-\nu_1$, and $S^*$ is bounded from $\dot W^{\mu,p'}$ into $\dot W^{\nu_2+\mu,p'}$ since $\nu_2+\mu>0$.  Since $\S_P$ and $\S$ embed continuously into $\dot W^{-\mu,p}$ and $\dot W^{\nu_2+\mu,p'}$, respectively, it follows that $S\circ T$ is continuous from $\S_P$ into $\S'$.  By symmetry, it follows that $(S\circ T)^*$ is also continuous from $\S_P$ into $\S'$.  Furthermore, this inequality, and a similar one for $(S\circ T)^*$, imply that $S\circ T$ satisfies the $WBP_{\nu_1+\nu_2}$

Next we show that $S\circ T$ has a standard kernel.  Let $\psi$ and $\widetilde\psi$ be as in Lemma \ref{l:calderon}, $Q_kf=\psi_k*f$, and $\widetilde Q_kf=\widetilde\psi_k*f$.  For $f,g\in\mathcal D_P$, we have
\begin{align*}
\<S\circ Tf,g\>&=\sum_{j,k,\ell,m\in\Z}\<\widetilde Q_\ell Q_\ell TQ_m\widetilde Q_mf,\widetilde Q_kQ_kS^*Q_j\widetilde Q_jg\>\\
&=\sum_{j,k,\ell,m\in\Z}\int_{\R^{2n}}\omega_{j,k,\ell,m}(x,y)f(y)g(x)dy\,dx,
\end{align*}
where
\begin{align*}
\omega_{j,k,\ell,m}(x,y)=\int_{\R^{2n}}\widetilde \psi_\ell(u-w)\widetilde\omega_{j,k,\ell,m}(w,\xi)\widetilde \psi_j(\xi-x)dz\,d\xi\,dv\,dw\,du
\end{align*}
and
\begin{align*}
\widetilde\omega_{j,k,\ell,m}(w,\xi)=\int_{\R^{3n}}\lambda_{\ell,m}^T(w,z)\widetilde \psi_m(z-y)\widetilde \psi_k(u-v)\lambda_{k,j}^{S^*}(v,\xi)dz\,dv\,du.
\end{align*}
For any fixed $K,N\geq0$, it follows from Corollary \ref{c:ao} and Lemma \ref{l:lambdamoment}, as well as similar arguments to those in the proof of Corollary \ref{c:ao}, that
\begin{align*}
&\left|\widetilde\omega_{j,k,\ell,m}(w,\xi)\right|\less2^{\nu_1\min(\ell,m)+\nu_2\min(j,k)}2^{-K\max(|j-k|,|k-\ell|,|\ell-m|)}\Phi_{\min(j,k,\ell,m)}^N(w-\xi).
\end{align*}
From this bound, it easily follows that
\begin{align*}
|\omega_{j,k,\ell,m}(x,y)|\less2^{\nu_1\min(\ell,m)+\nu_2\min(j,k)}2^{-K\max(|j-k|,|k-\ell|,|\ell-m|)}\Phi_{\min(j,k,\ell,m)}^N(x-y).
\end{align*}
Then for any $\alpha,\beta\in\N_0^n$ and $x\neq y$, we have
\begin{align*}
&\sum_{j,k,\ell,m\in\Z}|D^\alpha_yD^\beta_x\omega_{j,k,\ell,m}(x,y)|\\
&\hspace{2cm}\less\sum_{j,k,\ell,m\in\Z}2^{(\nu_1+|\alpha|)m+(\nu_2+|\beta|)j}2^{-K\max(|j-k|,|k-\ell|,|\ell-m|)}\Phi_{\min(j,k,\ell,m)}^N(x-y)\\
&\hspace{2cm}\less\sum_{j,k,\ell,m\in\Z}2^{(\nu_1+\nu_2+|\alpha|+|\beta|)\min(j,k,\ell,m)}2^{-\tilde K\max(|j-k|,|k-\ell|,|\ell-m|)}\Phi_{\min(j,k,\ell,m)}^N(x-y)\\
&\hspace{2cm}\less\sum_{j\in\Z}2^{(\nu_1+\nu_2+|\alpha|+|\beta|)j}\Phi_j^N(x-y)\less\frac{1}{|x-y|^{n+\nu_1+\nu_2+|\alpha|+|\beta|}}.
\end{align*}
These sums converges as long as $K>4(|\nu_1|+|\nu_2|+|\alpha|+|\beta|)$ and $N>n+\nu_1+\nu_2+|\alpha|+|\beta|$.  It follows that
\begin{align*}
K_{S\circ T}(x,y)=\sum_{j,k,\ell,m\in\Z}\omega_{j,k,\ell,m}(x,y)
\end{align*}
is the kernel of $S\circ T$ and is a $(\nu_1+\nu_2)$-order standard kernel that is $C^\infty$ off of the diagonal.  Therefore $S\circ T\in SIO_{\nu_1+\nu_2}(\infty)$.  Using the estimate for $\omega_{j,k,\ell,m}$ above, for $f,g\in\S_\infty$ and $t\in\R$ we have
\begin{align*}
&|\<S\circ Tf,g\>|\leq\sum_{j,k,\ell,m\in\Z}2^{t(m-j)}\left|\<\widetilde Q_\ell Q_\ell TQ_m\widetilde Q_m^t(|\nabla|^{-t}f),\widetilde Q_kQ_kS^*Q_j\widetilde Q_j^{-t}(|\nabla|^tg)\>\right|\\
&\hspace{1cm}\less\sum_{j,k,\ell,m\in\Z}2^{t(m-j)}\int_{\R^{2n}}\left|\widetilde\omega_{j,k,\ell,m}(x,y)\widetilde Q_m^t(|\nabla|^{-t}f)(y)\widetilde Q_j^{-t}(|\nabla|^tg)(x)\right|dy\,dx\\
&\hspace{1cm}\less\int_{\R^{n}}\sum_{j,k,\ell,m\in\Z}2^{(\nu_1+\nu_2)m}2^{-\tilde K\max(|j-k|,|k-\ell|,|\ell-m|)}\mathcal M(\widetilde Q_m^t(|\nabla|^{-t}f))(x)|\widetilde Q_j^{-t}(|\nabla|^tg)(x)|dx\\
&\hspace{1cm}\less\|f\|_{\dot W^{\nu_1+\nu_2+t,p}}\|g\|_{\dot W^{-t,p'}}.
\end{align*}
In this estimate, we fix a $P\in\N_0$ large enough depending on $t$.  Therefore $S\circ T$ can be extended to a bounded linear operator from $\dot W^{\nu_1+\nu_2+t,p}$ into $\dot W^{t,p}$ for all $t\in\R$ and $1<p<\infty$.  Taking $t=0$, we see that $S\circ T\in CZO_{\nu_1+\nu_2}(\infty)$, and we can apply Theorem \ref{t:T1sufficient}, which implies that $(S\circ T)^*(x^\alpha)=0$ for all $\alpha\in\N_0^n$.  By symmetry $S\circ T(x^\alpha)=0$ for all $\alpha\in\N_0^n$ as well.  Also $\nu_1+\nu_2\in V$, and so $S\circ T\in \mathfrak A_V$ which verifies that $\mathfrak A_V$ closed under composition as long as $V$ is closed under addition.  Since $SIO_\nu(\infty)$, $WBP_\nu$, and the condition $T(x^\alpha)=T^*(x^\alpha)=0$ are all symmetric under transposition, it is obvious that $\mathfrak A_V$ is closed under transposes too.  Therefore $\mathfrak A_V$ is an algebra that is closed under composition and transposes.
\end{proof}

\begin{remark}
Theorem \ref{t:operatoralgebra} defines many operator algebras for different classes of singular integral operators.  If one takes $V=\{0\}$, then $\mathfrak A_{\{0\}}$ is a set of Calder\'on-Zygmund operators and one of the operator algebras discussed by Coifman and Meyer in \cite{CMbook}.  Some other interesting examples of algebras $\mathfrak A_V$ can be constructed by taking $V$ to be, for example, $V=\R$ or $V=\{\mu\,\nu:\nu\in\Z\}$ for some fixed $\mu\in\R$.  One can also modify any of these examples by replacing $V$ with $V\cap (0,\infty)$, $V\cap[0,\infty)$, $V\cap (-\infty,0)$, or $V\cap(-\infty,0]$, which amounts to restricting an algebra to differential or fractional integral operators (strictly or including order-zero operators).  Furthermore, one can combine different algebras by defining $\mathfrak A_V=\mathfrak A_{V_1}+\mathfrak A_{V_2}$, where $V\subset\R$ is the smallest set that is closed under addition and contains $V_1\cup V_2$.  Using this convention, we can consider the set $V=\{\mu_1-\mu_2\pi:\mu_1,\mu_2\in\mathbb Q,\,\mu_1\geq0,\,\mu_2\geq0,\,\mu_1\cdot\mu_2\neq0\}\subset\R$, which is closed under addition.  Since $\pi$ is transcendental, it follows that $0\notin V$.  This generates a somewhat peculiar example of $\mathfrak A_V$ since with this selection of $V$, it follows that $\mathfrak A_V$ is an operator that has both derivative and fractional integral operators, but does not contain any Calder\'on-Zygmund operators (order zero operators).  Of course many other examples can be generated by selecting $V$ in different ways, and even further understand $\mathfrak A_V$ through the algebraic properties of $V$.

Note that the properties imposed on the operators in $\mathfrak A_V$ imply that if $\mathfrak A_V\cap SIO_\nu\subset CZO_\nu$.  Also note that any algebra $\mathfrak A_V$ contains all convolutions operators in $CZO_\nu(\infty)$ as long as $\nu\in V$.  That is
\begin{align*}
\bigcup_{\nu\in V}\{T\in CZO_\nu(\infty):T\text{ is a convolution}\}\subset\mathfrak A_V\subset\bigcup_{\nu\in V}CZO_\nu(\infty).
\end{align*}
Furthermore, it is not hard to see that both inclusions here are strict for non-empty sets $V\subset\R$.
\end{remark}

\section{acknowledgements}\label{ackref}
The authors would like to thank Diego Maldonado for posing some questions related to the second author about weighted operator estimates on Hardy spaces that led to some of the results in this work.  In particular, Proposition \ref{p:extrapolation} and item (5) of Theorem \ref{t:T1sufficient} were motivated by discussion with him.  We would also like to thank Rodolfo H. Torres for his valuable feedback on this article, in particular his advice pertaining to pseudodifferential operators of type $1,1$ that were discussed in Section \ref{Sect7.2}.

\end{document}